\documentclass[11pt]{amsart}

\allowbreak

\numberwithin{equation}{section}

\usepackage{amsfonts,amssymb,amscd,amsmath,latexsym,amsbsy}
\usepackage{graphicx}
\usepackage{subcaption}
\usepackage{amssymb}
\usepackage{amsfonts}
\usepackage{latexsym}
\usepackage{mathrsfs}
\usepackage{mathtools}
\usepackage{amsthm}
\usepackage{comment}
\usepackage{listings}
\usepackage{tikz-cd}
\usepackage{tikz}
\usetikzlibrary {calc}
\usepackage{hyperref}
\usepackage{amsmath}
\usepackage{leftidx}
\usepackage[toc,page]{appendix}
\usepackage{upgreek}

\allowdisplaybreaks
\newtheorem{thm}{Theorem}[section]

\newtheorem{lmm}[thm]{Lemma}
\newtheorem{prop}[thm]{Proposition}

\theoremstyle{definition}
\newtheorem{definition}[thm]{Definition}
\newtheorem{remark}[thm]{Remark}
\newtheorem*{remark*}{Remark}

\numberwithin{equation}{section}

\newcommand{\SM}{\mathcal M}
\newcommand{\SR}{\mathcal R}

\newcommand{\R}{\ensuremath{\mathbb{R}}}
\newcommand{\RR}{\ensuremath{\mathbb{R}}}
\newcommand{\Z}{\ensuremath{\mathbb{Z}}}
\newcommand{\N}{\ensuremath{\mathbb{N}}}

\newcommand{\WF}{\ensuremath{\mathrm{WF}}}
\newcommand{\Ell}{\ensuremath{\mathrm{Ell}}}
\newcommand{\supp}{\ensuremath{\mathrm{supp}}}

\newcommand{\mk}{\ensuremath{\mathfrak}}

\newcommand{\msf}{\ensuremath{\mathsf}}

\newcommand{\la}{\ensuremath{\langle}}
\newcommand{\ra}{\ensuremath{\rangle}}

\newcommand{\Op}{\ensuremath{\mathrm{Op}}}

\newcommand{\Id}{\ensuremath{\mathrm{Id}}}
\newcommand{\oc}{\ensuremath{\mathrm{1c}}}

\newcommand{\ps}{\ensuremath{\mathrm{ps}}}
\newcommand{\full}{\ensuremath{\mathrm{full}}}
\newcommand{\sub}{\ensuremath{\mathrm{sub}}}

\newcommand{\ff}{\ensuremath{\mathrm{ff}}}
\newcommand\SL{\mathcal{L}}

\newcommand\xoc{{x_{\oc}}}
\newcommand\yoc{{y_{\oc}}}
\newcommand\etaoc{{\eta_{\oc}}}
\newcommand\xioc{{\xi_{\oc}}}

\newcommand\rhops{{\rho_{\ps}}}
\newcommand\ffocps{{\ff_{\oc-\ps}}}
 
\newcommand\ffococ{{\ff_{\oc-\oc}}} 

\newcommand\xoco{x_{\oc, 1}}
\newcommand\yoco{y_{\oc, 1}}
\newcommand\xioco{\xi_{\oc, 1}}
\newcommand\etaoco{\eta_{\oc, 1}}
\newcommand\xoct{x_{\oc, 2}}
\newcommand\yoct{y_{\oc, 2}}
\newcommand\xioct{\xi_{\oc, 2}}
\newcommand\etaoct{\eta_{\oc, 2}}

\newcommand{\cl}{\ensuremath{\mathrm{cl}}}

\newcommand{\Char}{\ensuremath{\mathrm{Char}}}

\newcommand{\Cl}{\ensuremath{\mathfrak{Cl}_g}}

\newcommand{\ococb}{\ensuremath{ \leftidx{^{ \mathsf{L}\oc}}{ T^*X^2_b} } }

\newcommand{\rHp}{\ensuremath{ \mathsf{H}^{m',0}_p } }

\newcommand{\Rp}{\mathcal{R}_+}
\newcommand{\Rm}{\mathcal{R}_-}

\newcommand\Legps{L}
\newcommand\Leg{\mathcal{L}}
\newcommand\Lagps{\Lambda}
\newcommand\Lag{\Uplambda}

\newcommand\FSR{\overline{\Lambda_-'}}
\newcommand\BSR{\overline{\Lambda_+'}}

\newcommand\Poi{\mathcal{P}}
\newcommand\Poim{\mathcal{P}_-}
\newcommand\Poip{\mathcal{P}_+}

\newcommand\Poipm{\mathcal{P}_\pm}
\newcommand\Radm{\mathcal{R}_-}
\newcommand\Radp{\mathcal{R}_+}

\newcommand\base{\mathrm{base}}

\newcommand{\ococparaone}{v}
\newcommand{\ococparatwo}{w}

\newcommand\ocphase{\overline{{}^{\oc} T^* \R^n}}
\newcommand\psphase{\overline{{}^{\ps} T^* \R^{n+1}}}

\usepackage[letterpaper,top=2cm,bottom=2cm,left=2cm,right=2cm,marginparwidth=1.75cm]{geometry}

\title[Determining potentials from the scattering map]{Determining potentials from the scattering map of the time-dependent Schr\"odinger equation}

\author{Qiuye Jia}
\date{\today}

\begin{document}

\begin{abstract}
For a time dependent Schr\"odinger equation, the scattering map is the map sending the asymptotic profile of a solution as $t \to-\infty$ to its asymptotic profile as $t\to+\infty$. 
In this paper we show that, for a certain class of metrics, the scattering maps and Poisson operators associated to two Schr\"odinger operators on the same curved space only differ by a compact operator on a critical level if and only if the two potentials are equal.
\end{abstract}

\maketitle

\tableofcontents
\section{Introduction}
\label{sec:intro}

\subsection{The set up and the main results}
\label{subsec:main-results}

In this article, we study the problem of how the scattering map and the Poisson operator of the time-dependent Schr\"odinger equation in curved spaces determine the potential. 
More precisely, we consider the Schr\"odinger operator $P$ on $\R^{n+1}_{z, t}$, where $z \in \R^n$ with $n \geq 3$, $t \in \R$, 
\begin{equation} \label{eq:def-Schrodinger-op}
P = D_t + \Delta_{g(t)} + V(z, t), 
\end{equation}
where $D_t = -i \partial_t$, $\Delta_{g(t)}$ is the positive Laplace operator with respect to a smooth family of metrics $g(t)$ on $\R^n_z$, and $V$ is a smooth real-valued potential function 
\footnote{ The requirement that $V$ is real only enters towards the end of the article in Section~\ref{sec:sc-map-determine-potential}. We allow complex-valued potentials for results recalled from \cite{HJ2026-scattering-map}. Also, we allow complex valued potentials for results in Section~\ref{sec:Poisson-determine-potential} determining the potential from the Poisson operator. Even for results in Section~\ref{sec:sc-map-determine-potential}, we can still determine the real part of the potential when they are complex valued. See Remark~\ref{remark: complex-potential}. }. 
In addition, we make the assumption that $g(t)$ is a compactly supported, in spacetime, perturbation of the flat metric:
\begin{equation} \label{eq:g-definition-perturbation}
g(t) = g_0+ \tilde{g}(t), \text{ where } \tilde{g}(t) \text{ is compactly supported in spacetime}, 
\end{equation}
and $V$ is also compactly supported in spacetime. 
Thus, there exist large constants $R$, $T$ such that if either $|z| \geq R$, or if $|t| \geq T$, then $g(t) = \sum_{i,j} g_{ij}(z,t) dz_i dz_j$ coincides with the flat metric $g_0 = \sum_i dz_i^2$, and $V$ vanishes identically. 
We use $K \subset \R^{n+1}$ to denote a compact set containing the support of $\tilde{g}$ and $V$.
So outside $K$, $P$ coincides with the `free' Schr\"odinger operator $P_0 := D_t + \Delta_0$, where $\Delta_0$ is the (positive) Laplacian on $\R^n$ with the Euclidean metric. We assume, in addition, that each $g(t)$ is non-trapping in the sense that its geodesics always escape any compact region in $\R^n$ in finite time.

The scattering map concerns final state data or scattering data of global solutions to $Pu = 0$. Every  global solution $u$ has an asymptotic expansion of the form 
\begin{equation}\label{eq:u expansion}
u \sim (4\pi it)^{-n/2} e^{i|z|^2/4t} f_\pm\big( \frac{z}{2t} \big) + O(|t|^{-n/2 - \epsilon}), \quad t \to \pm \infty, \, \epsilon>0
\end{equation}
for large positive or negative times, where this holds in a pointwise sense if $f_\pm$ are sufficiently regular, and distributionally in general. The functions $f_\pm$ are by definition the asymptotic data of the solution $u$. 
Then the \emph{scattering map} of $P$ is the map
\begin{equation} \label{eq:S-def-intro}
    S: \; f_- \to f_+.
\end{equation}
The main theorem of \cite{HJ2026-scattering-map} shows that this map is an elliptic Fourier integral operator—more precisely, a Legendre distribution—of a novel type.
See \cite[Section~1]{HJ2026-scattering-map} for more detailed discussion.

The aim of this article is to answer the inverse problem that arises from this. That is, suppose the scattering maps $S_{i}$ associated with two different operators
\begin{equation} \label{eq:def-Pi}
    P_i=D_t+\Delta_{g(t)}+V_i(z,t),\; i=1,2
\end{equation}
coincide up to a certain order, then does this imply $V_1=V_2$? 

Another object of fundamental importance in the scattering theory is the Poisson operator. In terms of the notation above, forward (for $-$) and backward Poisson operators are the maps
\begin{align}
\Poipm: f_\pm \to u.
\end{align}
One can also ask whether $\Poipm$ associated with $P_i$ coinciding up to a certain order implies that $V_1=V_2$.

We give an affirmative answer to both questions when the metric $g(t)$ is in the following class of metrics, which arises naturally in the geometric inverse problems. 

\begin{definition} \label{definition:metric-classes}
Let $g$ be as above and $B_R(0) \subset \R^n$ be the ball in $\R^n$ of radius $R$ centered at the origin and $T \in \R_+$, such that $[-T,T] \times B_R(0)$ contains the support of $g-g_0$. We say that $g(t)$ \emph{admits a convex function} if there is a function $f \in C^\infty([-T,T] \times B_R(0))$ such that $\mathrm{Hess} \, f$ is strictly positive, where the Hessian is with respect to $g(t)$ for fixed $t$ on $B_R(0)$.
\end{definition}

See \cite[Lemma~2.1]{paternain2019geodesic} for conditions ensuring that $g(t)$ admits a convex function. In particular, when $g(t)$ is Euclidean, or has non-positive sectional curvature, or has curvature lower-bounded by a constant depending on $R$, then $g(t)$ admits a convex function. 



For the first question concerning $S_1-S_2$, our answer is the following theorem. 

\begin{thm} \label{thm:main-scmap-intro} 
For $n \geq 3$, suppose $S_1-S_2$ is a compact operator from $L^2(\R^n)$ to $H^{s,1}_{\oc}(\R^n)$ for a fixed $s \in \R$, then $V_1=V_2$.
\end{thm}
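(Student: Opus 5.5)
The plan is to extract from $S_1-S_2$ the X-ray transform of $V_1-V_2$ along geodesics of $g(t)$, and then invoke injectivity of that geodesic X-ray transform, which is available for metrics admitting a convex function (Paternain–Salo–Uhlmann–Zhou type results, cf.\ \cite{paternain2019geodesic}), applied slice by slice in $t$.

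\textbf{Step 1: symbolic structure.} By the main theorem of \cite{HJ2026-scattering-map}, each $S_i$ is an elliptic Legendre (1c-type) Fourier integral operator associated with the same canonical relation, since the relation is determined by the bicharacteristic flow of the principal symbol $\tau+|\zeta|^2_{g(t)}$. The potentials $V_i$ are of lower order, so they first appear in the subprincipal term. I will therefore write $S_1-S_2$ as a Legendre distribution of one order lower than $S_i$. Its leading symbol should be given by transport along the bicharacteristics: it equals the principal symbol of $S_2$ multiplied by a factor of the form $e^{-i\int \gamma^*V_1}-e^{-i\int\gamma^*V_2}$, where $\gamma$ runs over the trajectory $t\mapsto (z(t),t)$. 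In the high-frequency 1c regime, this trajectory is a geodesic of $g(t)$ traversed at speed proportional to the frequency. As the frequency tends to infinity, the time spent inside $K$ shrinks, so the integral I actually need is the rescaled one, $\int V(\gamma(s),t_0)\,ds$, taken at essentially frozen time $t_0$. I expect this to come out of an expansion of the transport equation that tracks the next-order term in the $\oc$ calculus.

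\textbf{Step 2: compactness forces the symbol to vanish.} The assumption is that $S_1-S_2:L^2\to H^{s,1}_{\oc}$ is compact. I will check that the critical level $(s,1)$ is exactly the order at which the leading term of $S_1-S_2$ from Step 1 maps $L^2$ boundedly but not compactly unless its symbol vanishes. The argument is by contradiction: if the symbol were nonzero at some point, I would build a sequence of weakly null, $L^2$-normalized coherent states or wave packets concentrated near that point of the Legendrian. Their images would have norm bounded below in $H^{s,1}_{\oc}$, which contradicts compactness. The conclusion is that $\exp(-i I_{g(t_0)}V_1)=\exp(-i I_{g(t_0)}V_2)$ for every geodesic and every $t_0$. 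Because the $V_i$ are real, passing to first order in a scaling, or using continuity in the geodesic together with the fact that $I V\to0$ for geodesics escaping $K$, gives $I_{g(t_0)}(V_1-V_2)=0$. For complex potentials this step only recovers the real part, which matches the footnote.

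\textbf{Step 3: injectivity.} For each fixed $t_0$, the function $V_1(\cdot,t_0)-V_2(\cdot,t_0)$ is compactly supported in $B_R(0)$ and has vanishing geodesic X-ray transform for the metric $g(t_0)$. Since $g(t_0)$ admits a strictly convex function, the layer-stripping injectivity result of \cite{paternain2019geodesic} gives $V_1(\cdot,t_0)=V_2(\cdot,t_0)$ for every $t_0$.

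\textbf{Main obstacle.} I expect Step 1 to be the hardest part: pinning down precisely which term of the Legendre symbol of $S_1-S_2$ carries the potential, and showing that its rescaled high-energy limit is exactly a geodesic X-ray transform at frozen time rather than a spacetime-averaged quantity. I would first try to compute it through the Poisson operators $\Poipm$, using $S=\Poip^{*}$-type pairing identities so that the potential term appears as a boundary pairing of the form $\int (V_1-V_2)\,u_1\bar u_2$. I would then insert semiclassical Gaussian-beam solutions concentrated on a single geodesic at time $t_0$.
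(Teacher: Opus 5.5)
Your architecture is the paper's. The order $-1$ symbol of $S_1-S_2$ should be an X-ray transform of $V_1-V_2$ along geodesics of $g(t)$ at the frozen time $t=-\frac12\xi_{\oc}$. Compactness into $H^{s,1}_{\oc}$ should kill that symbol, and injectivity slice by slice finishes the argument. Your Step~3 is essentially Theorem~\ref{thm:X-ray}. Your coherent-state argument in Step~2 is a legitimate substitute for Lemma~\ref{lemma:1c1c-compactness}, which instead uses that $AA^*\in\Psi_{\oc}^{-\infty,-2m}$ is elliptic somewhere.

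The gap is Step~1. It carries all of the content, and you only conjecture it ("should be", "I expect"). The way you then use it in Step~2 is also off. In this Legendre calculus the order-$0$ symbols of $S_1$ and $S_2$ coincide whatever the potentials are. Compactness at level $(s,1)$ sees only the order $-1$ symbol, and that symbol is \emph{linear} in $V_1-V_2$. No identity $\exp(-iIV_1)=\exp(-iIV_2)$ appears at any fixed order; that expression is a semiclassical resummation, not a statement about a symbol on $\ffococ$. What you actually have to prove is:
\begin{itemize}
\item[(a)] $S_1-S_2\in I^{-1}_{\oc-\oc}$, with $\sigma^{-1}_{\oc-\oc}$ of the form $\int (V_1-V_2)(\gamma(s'))\,w(s')\,ds'$;
\item[(b)] $|w|\gtrsim 1$ on the support of $V_1-V_2$.
\end{itemize}
Without (b), Step~3 cannot be invoked.

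The missing mechanism is the one the paper supplies.
\begin{itemize}
\item Start from \eqref{eq:SQ1c-1} and replace the microlocalized Poisson operators by the 1c-ps parametrices $K^{(i)}_\pm=\sum_j K^{(i)}_{\pm,j}$.
\item Since $V_i$ is two ps-orders below $D_t+\Delta_g$, the terms $K_{\pm,0}$ do not depend on the potential, and $[P_i,Q_+]$ may be replaced by $[P_g,Q_+]$ modulo $I^{-2}_{\oc-\oc}$.
\item Hence the order $-1$ symbol of $(S_1-S_2)Q_{\oc}$ comes only from the terms $(j,j')=(0,1)$ and $(1,0)$. There $b_{\pm,1}=a^{(1)}_{\pm,1}-a^{(2)}_{\pm,1}$ solves $(-i\mathscr{L}_{H_p^{2,0}}+iA)b_{\pm,1}=(V_1-V_2)a_{\pm,0}$.
\item Proposition~\ref{prop: 1c-ps composition to 1c-1c} gives the symbol $(2\pi)^n\int H_p^{2,0}\mk{q}_+\,(\overline{b_{+,1}}a_{-,0}+a_{+,0}b_{-,1})\,ds$. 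Fubini turns this into a weighted X-ray transform.
\end{itemize}

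Your guessed factorization is in fact what this produces. Suppose the forcing terms $F_\pm$ are supported before (resp.\ after) the interaction region. Then $b_{-,1}(s)=i\,a_{-,0}(s)\int_{-\infty}^{s}(V_1-V_2)$ and $b_{+,1}(s)=-i\,a_{+,0}(s)\int_{s}^{\infty}(V_1-V_2)$. For real $V_i$ the integrand collapses to $i\,a_{+,0}a_{-,0}\int_{\R}(V_1-V_2)$. So, up to a positive factor relating $s$ to arclength, the weight is $\pm i\,\sigma^{0}_{\oc-\oc}(SQ_{\oc})$, and ellipticity of $S$ gives (b). Note that reality of $V$ is used exactly here, to combine $\overline{b_{+,1}}$ with $b_{-,1}$, and not to linearize exponentials.

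Your fallback, via the pairing $\int (V_1-V_2)u_1\bar u_2$ and Gaussian beams, is plausible. It would still need exact solutions with prescribed asymptotic data concentrated in the 1c sense, plus error control at the $H^{s,1}_{\oc}$ scale. None of that is in the proposal.
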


\begin{remark}
The condition that $g$ admits a convex function and the dimensional restriction $n \geq 3$ only enters in the last step invoking Theorem~\ref{thm:X-ray}. So one can replace this condition by any other conditions that imply the injectivity of the weighted X-ray transform. 

In addition, combining with the proof of \cite[Corollary]{uhlmann2016inverse},
for fixed $t$ and large $N$ depending on how many iterations we need to foliate the entire $B_R(0)$, 
our proof in Section~\ref{sec:sc-map-determine-potential} gives a global stability estimate of the form 
\begin{equation}
\| V_1 - V_2\|_{H^s(\R^{n})} \lesssim \|\sigma^{-1}_{\oc-\oc}( (S_1-S_2)Q_{\oc} ) \|_{H^{s+N}( \ocphase )}.
\end{equation}
Here we used the fact that points in $\ocphase$ can be identified with geodesics in $\R^{n+1}$. See \cite[Section~3.3]{HJ2026-scattering-map}\cite[Section~5.1]{jia2026metric-inverse} for details.
\end{remark}

\begin{remark} \label{remark:sharpness-main-1}
Here the index $1$ is sharp in the sense that for $r<1$, $S_1-S_2$ is a compact operator $L^2(\R^n) \to H_{\oc}^{s,r}(\R^n)$ even if $V_1 \neq V_2$. This follows from \cite[Theorem~A.1]{jia2026metric-inverse} and Lemma~\ref{lemma:1c1c-compactness} below. 

On the other hand, for $V_1 \neq V_2$, $S_1-S_2$ is bounded from $H_{\oc}^{s,1}(\R^n)$ to $L^2(\R^n)$ but is not bounded from $H_{\oc}^{s,r}(\R^n)$ to $L^2(\R^n)$ for $r>1$.
In fact, by our proof of Theorem~\ref{thm:main-scmap-intro}, when $V_1 \neq V_2$, $A = S_1-S_2 \in I_{\oc-\oc}^{-1}(\R^n \times \R^n, (\mathrm{Gr}(\Cl))')$ has a principal symbol that does not vanish at a certain point. Then $A^*A \in \Psi_{\oc}^{-\infty,-2}$ (see Section~\ref{sec:the_1_cusp_pseudodifferential_algebra}) is elliptic at a certain point, which is unbounded $H_{\oc}^{s,r}(\R^n) \to H_{\oc}^{-s,-r}(\R^n)$ and this gives the unboundedness above.  
\end{remark}



\begin{thm} \label{thm:main-Poisson-intro}
For $n \geq 3$, if $\Poi_{\pm}^{(1)}-\Poi_{\pm}^{(2)}$ is compact from $L^2(\R^n)$ to $H^{\frac{3}{2},r}_{\ps}(\R^{n+1})$ for some $r \in R$, then $V_1=V_2$.
\end{thm}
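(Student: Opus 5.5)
Fix the sign, say $\Poi_-$; the $\Poi_+$ case is symmetric under $t \mapsto -t$. The plan is to show that $\Poi_-^{(1)}-\Poi_-^{(2)}$ is, microlocally near the outgoing radial set, a Legendre distribution in the parabolic-scattering calculus of \cite{HJ2026-scattering-map}. Its principal symbol there should be a non-vanishing multiple of a weighted X-ray transform of $V_1-V_2$ along the characteristic geodesics of $g(t)$. Compactness into $H^{\frac32,r}_{\ps}$ then forces this symbol to vanish, and Theorem~\ref{thm:X-ray} gives $V_1=V_2$.

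\textbf{Difference formula.} Write $u_i=\Poi_-^{(i)}f$. Then
\begin{equation*}
P_1(u_1-u_2)=(V_2-V_1)u_2 ,
\end{equation*}
and $u_1-u_2$ has vanishing incoming data. Hence
\begin{equation*}
\Poi_-^{(1)}-\Poi_-^{(2)}=R_1^{-}\,(V_2-V_1)\,\Poi_-^{(2)},
\end{equation*}
where $R_1^-$ is the forward (retarded) inverse of $P_1$. The parametrix construction in \cite{HJ2026-scattering-map} describes $\Poi_-^{(2)}$ as a Legendre distribution associated with the flowout of the incoming radial set. Multiplication by the compactly supported $V_2-V_1$ localizes to $K$, and $R_1^-$ propagates along the bicharacteristic flow.

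\textbf{Principal symbol.} Compute the symbol of the composition by a transport-equation argument. Since $V_2-V_1$ is a lower order perturbation, the leading term of the difference is one order lower than $\Poi_-^{(i)}$ itself, and the $\frac32$ should be exactly the critical level. Along each bicharacteristic, the transport equation for the amplitude of $u_1$ differs from that of $u_2$ by the source $-i(V_1-V_2)$ times the amplitude. Integrating over the flow, the subprincipal-order correction at the outgoing end is
\begin{equation*}
\sigma\bigl(\Poi_-^{(2)}\bigr)\int_\gamma (V_1-V_2)\,ds ,
\end{equation*}
up to a nonzero factor. Here $\gamma$ runs over the spacetime geodesics parametrized by points of $\ocphase$, with a weight coming from the parametrization (the time variable along the projected curve). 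So the symbol of the difference, viewed as an operator into $H^{\frac32,r}_{\ps}$ at the first nontrivial order, is an elliptic factor times a weighted X-ray transform $I_w(V_1-V_2)(\gamma)$.

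\textbf{Compactness and conclusion.} Compactness $L^2\to H^{\frac32,r}_{\ps}$ should imply that this symbol vanishes. If $I_w(V_1-V_2)(\gamma_0)\neq 0$ for some $\gamma_0$, build a sequence $f_k$ of $L^2$-normalized coherent states concentrating at $\gamma_0$ and converging weakly to $0$. Composing with a microlocalizer $Q$ gives a $\ps$-pseudodifferential operator elliptic at the corresponding outgoing point, so that $\|Q(\Poi^{(1)}_--\Poi^{(2)}_-)f_k\|_{H^{\frac32,r}_{\ps}}$ stays bounded below, as in the analogue of Lemma~\ref{lemma:1c1c-compactness}. This contradicts compactness. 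The weight $r$ is irrelevant because the relevant singularity lives at fiber infinity in the $\ps$ sense, which is why any $r$ is allowed. Hence $I_w(V_1-V_2)\equiv 0$ for each fixed $t$-slice family of geodesics. Since $g(t)$ admits a convex function and $n\ge3$, Theorem~\ref{thm:X-ray} (local injectivity with foliation, as in \cite{uhlmann2016inverse}) yields $V_1=V_2$. No reality of $V$ is needed.

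The main obstacle is the second step: identifying the orders precisely and the symbol of $R_1^-(V_2-V_1)\Poi_-^{(2)}$ in the $\ps$ Legendre calculus, in particular checking that the leading term is exactly at order $\frac32$ with nonvanishing coefficient. I would first try to reduce it to the scattering map computation, via $\Poi$'s outgoing data being $S$, and otherwise do a direct stationary-phase/transport computation near the radial set.
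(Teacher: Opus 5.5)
Your outline is sound, but at the decisive step you take a more roundabout route than the paper.

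\textbf{How the paper gets the symbol.} The paper does not use the resolvent identity. It differences the parametrices $K^{(i)}_\pm=\sum_j K^{(i)}_{\pm,j}$ directly. The leading symbols $a_{\pm,0}$ are independent of $V$, so the microlocalized difference lies in $I^{-7/4}_{\oc-\ps}$. Its principal symbol is $b_{\pm,1}$, which solves $(-i\mathscr{L}_{H_p^{2,0}}+iA)b_{\pm,1}=(V_1-V_2)a_{\pm,0}$ with zero data at $s=\mp\infty$. This settles what you call the main obstacle without ever putting $R_1^-$ into a calculus. Compactness into $H^{3/2,r}_{\ps}$ then kills $b_{\pm,1}$. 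The paper does this through the $A^*A\in\Psi^{-\infty,0}_{\oc}(\R^n)$ argument, whose symbol is $\int|\sigma(A)|^2\,ds>0$. Your coherent states would have to reproduce that lower bound, which you have not justified.

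\textbf{What you miss.} The point you do not use is that this symbol lives on the whole Legendrian $\Lagps_\pm\cap\ffocps$. That is, it is defined at every point $(q_\pm,\gamma(s))$ of each bicharacteristic over the compact region, not only at an end. Hence $b_{\pm,1}\equiv 0$ for all $s$, and the transport equation forces $(V_1-V_2)(\gamma(s))\,a_{\pm,0}(q_\pm,\gamma(s))=0$. Since $a_{\pm,0}\gtrsim 1$ over $[-T,T]\times B_R(0)$, this gives $V_1=V_2$ pointwise. Theorem~\ref{thm:X-ray} is never invoked, and with it neither the convex-function hypothesis nor $n\ge 3$.

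\textbf{Your route still works.} You read off $b_{-,1}$ only after $\gamma$ leaves $B_R(0)$, where
$b_{-,1}=i\int E(s',s)(V_1-V_2)(\gamma(s'))\,a_{-,0}(q_-,\gamma(s'))\,ds'$.
Since $E>0$ and $a_{-,0}\gtrsim 1$, this is a weighted X-ray transform with weight bounded below. Your constant weight $\sigma(\Poi^{(2)}_-)$ is the special case where the forcing $F_-$ precedes the potential along $\gamma$. So Theorem~\ref{thm:X-ray} does close your argument under the paper's standing assumptions. In effect you have transplanted the scattering-map proof, spending X-ray injectivity on information the Poisson operator already gives pointwise.

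\textbf{One correction.} Place that evaluation point over a compact spacetime set (inside $G_-$), not ``near the outgoing radial set.'' $\mathcal{R}_+$ lies at spacetime infinity, which is outside the $\oc$-$\ps$ calculus as set up. There the decay order matters, and an arbitrary $r$ gives no control. For the same reason, your suggested reduction to the scattering map via outgoing data is not available from this hypothesis.
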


\begin{remark} \label{remark:sharpness-main-2}
One can see from the proof of Proposition~\ref{prop:boundedness-compactness-1cps} that $\frac{3}{2}$ here is sharp in a similar fashion as Remark~\ref{remark:sharpness-main-1}: $m=3/2$ is the maximum such that $L^2(\R^n) \to H_{\ps}^{m,r}(\R^{n+1})$ is bounded when $V_1 \neq V_2$. Also, for $m<\frac{3}{2}$, $\Poi_{\pm}^{(1)}-\Poi_{\pm}^{(2)}$ is compact from $L^2(\R^n)$ to $H^{m,r}_{\ps}(\R^{n+1})$ even if $V_1 \neq V_2$.
This is because $\Poi_{\pm}^{(1)}-\Poi_{\pm}^{(2)} \in I_{\oc-\ps}^{-7/4}(\R^{n+1} \times \R^n, \Lagps_\pm)$ by \cite[Proposition~A.2]{jia2026metric-inverse}. 
So it is bounded $L^2(\R^n) \to H_{\ps}^{\frac{3}{2},r'}(\R^{n+1})$ for a fixed $r'>r$ by Proposition~\ref{prop:boundedness-compactness-1cps}.
The conclusion follows since the embedding $H_{\ps}^{\frac{3}{2},r'}(\R^{n+1}) \hookrightarrow H_{\ps}^{m,r}(\R^{n+1})$ is compact.
\end{remark}


\subsection{Strategy of the proof} \label{subsec:strategy-sketch}
We briefly summarize the strategy of the proof here.
The common theme in both inverse
results is that the compactness assumption is imposed at the critical order at which the first potential-dependent symbol appears. The compactness condition forces this symbol to vanish. 
For the Poisson operator this first non-trivial symbol contains the integral of $V_1-V_2$ along bicharacteristics up to a point. 
While for the scattering map, it contains an integral of the potential difference along the
corresponding bicharacteristic. The latter integral is then interpreted as a
weighted geodesic X-ray transform.

We first recall, in Sections~\ref{sec: parabolic sc PsiDO} and
\ref{sec:the_1_cusp_pseudodifferential_algebra}, the two pseudodifferential calculi which describe the two ends of the problem. The parabolic scattering calculus is adapted to the Schr\"odinger operator on spacetime, while the 1-cusp calculus is adapted to the asymptotic data appearing in
\eqref{eq:u expansion}. The relation between them is given by the source-sink
structure of the rescaled Hamilton flow of the principal symbol of $P$. After
blowing up the radial sets $\mathcal R_\pm$, the corresponding front faces are
canonically identified with $\ocphase$. Thus a one-cusp covector parametrizes a
bicharacteristic of the Schr\"odinger operator which enters and exits the
interaction region. At fiber infinity these bicharacteristics remain in a fixed
time slice and project to geodesics for the metric $g(t)$.

Section~\ref{sec:1c-ps-geometry-distribution} recalls the calculus of 1c-ps Fourier integral operators and Section~\ref{sec:1c-1c-calculus} recalls the calculus of 1c-1c Fourier integral operators.
The microlocalized Poisson operators are 1c-ps Fourier integral operators and the microlocalized scattering map is a 1c-1c Fourier integral operator.

We prove Theorem~\ref{thm:main-Poisson-intro} in Section~\ref{sec:Poisson-determine-potential}. 
Let $P_i=D_t+\Delta_{g(t)}+V_i$ and write $a_{\pm,j}^{(i)}$ for the symbols of the Poisson parametrices associated to $P_i$. 
Let $H_p^{2,0}$ be the rescaled Hamilton vector field associated to either of the $P_i$, which coincide at the leading order.
The principal symbols of them are obtained by solving transport equations along the $H_p^{2,0}$-flow. The leading symbols $a_{\pm,0}^{(i)}$ are independent of the potential. 

The first contribution of the potential difference occurs in
\begin{align}
    b_{\pm,1}:=a_{\pm,1}^{(1)}-a_{\pm,1}^{(2)} ,
\end{align}
which again satisfies a transport equation along the $H_p^{2,0}$ with $V_1-V_2$ appearing in the forcing term. We will then show $V_1-V_2=0$ by showing $b_{\pm,1}=0$.

The proof of Theorem~\ref{thm:main-scmap-intro} in
Section~\ref{sec:sc-map-determine-potential} is more involved. 
We will use the following identity relating the scattering map to Poisson operators:
\begin{align} \label{eq:S1-interms-Poisson}
    S_i Q_{\oc} = i(2\pi)^n (\Poip^{(i)})^* [P_i,Q_+]\Poim^{(i)}Q_{\oc},
\end{align}
where $Q_+,Q_{\oc}$ are suitable microlocal cutoffs. 
Using this, we will show that the sub-principal symbol of $(S_1-S_2)Q_{\oc}$ is given by an integral along the bicharacteristic line of $V_1-V_2$ times a certain weight.
We will then show that, with carefully chosen microlocal cut-offs, the weight is non-vanishing on the support of $V_1-V_2$.
Then the compactness assumption in Theorem~\ref{thm:main-scmap-intro} will enforce this sub-principal symbol to vanish, which in turn shows $V_1-V_2 = 0$. 

We conclude this subsection by discussing how an explicit reconstruction of the potential can be achieved using our method.
We begin with the Poisson operator, which is relatively more direct. 
Using \eqref{eq:b-1-formula}\eqref{eq:b+1-formula} with $V_1=V$, $V_2=0$, we know that from the principal symbol of $\Poipm^{(1)}-\Poipm^{(2)}$, one obtains a weighted integral of $V$ along a bicharacteristic line up to a certain point. Then differentiating this integral along the rescaled Hamilton vector field recovers $V$ times the weight. In addition, the weight is given by \eqref{eq:def-E}, which in principle can be computed explicitly in terms of the metric $g$ and this recovers $V$.

If one uses the data from the scattering map, then as sketched above,
applying our method to $V_2=0,V_1=V$,
the principal symbol of $S_1-S_2$ gives the weighted X-ray transform of the potential. 
Then as pointed out in \cite[Section~2]{uhlmann2016inverse}, one can construct the inverse of (the normal operator of) the X-ray transform using a Neumann series, which recovers the potential in our setting.

\subsection{Literature review}
Recovering a potential from scattering data is a classical problem in scattering theory and we can only give a brief review on a limited part of it. 
On the conceptual level, this is an inverse problem asking whether the interaction can be reconstructed from the observed incoming and outgoing data. 

The classical way to treat the scattering map is to  
decompose it with respect to energy. 
In one dimension, the Gel'fand--Levitan--Marchenko theory (see e.g. \cite{Gelfand-Levitan,Agranovich-Marchenko}) shows that the reflection/transmission data together with bound-state information determine the potential; see \cite{ChadanSabatier1989} and references therein. 
In higher dimensions the fixed-energy problem is more subtle. The works of Ramm, Nachman and Novikov \cite{Ramm-88,Nachman1992,Novikov1994} showed that, under compact-support or exponential decay hypotheses, a time-independent potential can be uniquely determined and in principle reconstructed from a fixed-energy scattering matrix or amplitude. 
See the lecture notes of Melrose \cite{Melrose-geometric-scattering} for the geometric microlocal analysis approach to this problem, which is closer to this article. In particular, in \cite[Section~3.6]{Melrose-geometric-scattering}, Melrose discussed how the leading order scattering amplitude associated to the Helmholtz operator recovers the X-ray transform of the potential, which in turn determines the potential.
For time-dependent potentials, see the works of Kitada and Yajima \cite{Kitada-Yajima-82}\cite{Kitada-Yajima-83}. See also the work of Enss-Weder \cite{EnssWeder1995} for the problem in the N-body setting, and Weder \cite{Weder-lecture-Nbody} in such a setting with a time-dependent potential.
For a similar problem for the wave equation, see the works of Stefanov \cite{Stefanov-wave} and Uhlmann \cite{Uhlmann-wave} and references therein.

The main advantage of our microlocal framework, compared with more classical scattering theory, is to handle curved spaces equipped with a time dependent metric and time-dependent potentials, on which the literature remains relatively sparse.
In the time-independent setting, this is treated by Joshi and Joshi-S\'{a} Berreto in a series of works \cite{Joshi-Sabarreto-shortrange,JS-magnetic,Joshi-Coulomb,Joshi-conformal} using microlocal tools developed by Melrose-Zworski \cite{melrose1996scattering}. 

For a similar inverse problem determining the metric instead of the potential, we refer readers to the discussion in \cite[Section~1.3]{jia2026metric-inverse}.
The calculus of Fourier integral operators we are using was introduced by Hassell and the author in \cite{HJ2026-scattering-map}.
It borrows ideas from H\"ormander \cite{FIO1}, Duistermaat-H\"ormander \cite{FIOII}, Melrose-Zworski \cite{melrose1996scattering}, Vasy \cite{vasy1998geometric}, Hassell-Wunsch \cite{hassell-wunsch2005schrodinger}. 

\section{Pseudodifferential algebras}

\subsection{The parabolic scattering pseudodifferential algebras}
\label{sec: parabolic sc PsiDO}
In this section we give a brief introduction of the parabolic scattering pseudodifferential algebra. 
As the name indicates, it is the `parabolic version' of the scattering pseudodifferential algebra, where `parabolic' refers to the way to compactify the fiber infinity. 
More precisely, let $(t,z,\tau,\zeta)$ be coordinates on $T^*\R^{n+1}$, with $\tau dt + \zeta \cdot dz$ being the canonical form.
Then the (compactified) parabolic scattering cotangent bundle, denoted by $\overline{^{\ps}T^*\R^{n+1}}$, is defined by compactifying $T^*\R^{n+1}$ in the following way.
The `base' $\R^{n+1}$ is compactified radially to be a ball with boundary defining function $x_{\ps} = (1+t^2+|z|^2)^{-1/2}$, while each fiber is compactified `parabolically' to be a ball with boundary defining function
\begin{align}\label{eq:rhops defn}
\rho_{\ps} = (1+\tau^2+|\zeta|^4)^{-1/4},
\end{align}
with $\ps$ standing for `parabolic scattering'. 

\begin{definition}
The symbol class $S_{\ps}^{m,l}(\R^{n+1})$, with $m,l$ being the differential and decay (in fact growth) orders respectively, is defined to be the space of $a \in C^\infty(T^*\R^{n+1})$ (in fact also their extensions to $\overline{^{\ps}T^*\R^{n+1}}$) such that
\begin{align*}
\|a\|_{ S_{\ps,N}^{m,l}} 
:= \sum_{|\alpha|+k+|\beta|+j \leq N} \sup_{T^*\R^{n+1}} |x_{\ps}^{l-|\alpha|-k} \rho_{\ps}^{m-\beta-2j} \partial_z^\alpha \partial_t^k \partial_\zeta^\beta \partial_\tau^j a(z,t,\zeta,\tau) | < \infty,
\end{align*}
for any $N \in \N$. And these norms give $S_{\ps}^{m,l}(\R^{n+1})$ a structure of a Fr\'echet space. In addition, when $\rho_{\ps}^m x_{\ps}^l a$ extends to a smooth function on $\overline{^{\ps}T^*\R^{n+1}}$, then we say $a$ is classical, and the corresponding symbol class is denoted by $S_{\ps,\cl}^{m,l}(\R^{n+1})$.
\end{definition}

Then the corresponding parabolic scattering pseudodifferential operators 
$\Psi_{\ps}^{m,l}(\R^{n+1})$ are operators that are quantizations of symbols in
$S_{\ps}^{m,l}(\R^{n+1})$, which means they have Schwartz kernels of the form
\begin{align*}
\Op(a) = (2\pi)^{-(n+1)} \int e^{ i (t-t')\tau+(z-z')\cdot \zeta}
a(t,z,\tau,\zeta) d\zeta d\tau,
\end{align*}
in the distributional sense.

The principal symbol of $A = \Op(a) \in \Psi_{\ps}^{m,l}(\R^{n+1})$ is defined to be the equivalence class of $a$ in $S_{\ps}^{m,l}(\R^{n+1})$ quotient by $S_{\ps}^{m-1,l-1}(\R^{n+1})$:
\begin{align*}
[a] \in S_{\ps}^{m,l}(\R^{n+1})/S_{\ps}^{m-1,l-1}(\R^{n+1}).
\end{align*}

A symbol $a \in S_{\ps}^{m,l}(\R^{n+1})$ (and corresponding operator $A=\Op(a)$) is said to be elliptic if it satisfies:
\begin{align*}
|\rho_{\ps}^m x_{\ps}^{l}a| \geq C, \text{ when } \rho_{\ps} \leq \epsilon 
\text{ or } x_{\ps} \leq \epsilon, 
\end{align*}
for some $\epsilon>0,C>0$. And we say that $a$ (and corresponding operator $A=\Op(a)$) is elliptic at $q \in \partial \overline{^{\ps}T^*\R^{n+1}}$ if there is a neighborhood of $q$ on which the above inequality is satisfied. And the set of all such $q \in \partial \overline{^{\ps}T^*\R^{n+1}}$ is denoted by $\Ell_{\ps}^{m,l}(a)$ (or $\Ell_{\ps}^{m,l}(A)$).

Finally we recall the concept of (parabolic) wavefront sets, which is also called the micro-support.
For $A = \Op(a)$, its parabolic scattering operator wavefront set $WF'_{\ps}(A)$ is defined, as a subset of $\partial \overline{^{\ps}T^*\R^{n+1}}$, as follows.
For $q \in \partial \overline{^{\ps}T^*\R^{n+1}}$, we say
$q \notin \WF'_{\ps}(A)$ if and only if there is $\chi \in C^\infty(\overline{^{\ps}T^*\R^{n+1}})$ with $\chi(q) = 1$ such that $\chi a \in \mathcal{S}(\overline{^{\ps}T^*\R^{n+1}})$.
In particular, $\WF'_{\ps} (A) = \emptyset$ if and only if $A \in \Psi_{\ps}^{-\infty,-\infty}(\R^{n+1})$.

\subsection{The 1-cusp pseudodifferential algebra}
\label{sec:the_1_cusp_pseudodifferential_algebra}


In this section, we briefly introduce the 1-cusp pseudodifferential algebra, and refer readers to \cite{Zachos:Thesis}\cite[Section~2]{zachos2022inverting}\cite[Section~2]{jia2022tensorial} for more details.
For its connection to time-dependent Schr\"odinger equations, see \cite[Section~3.3]{HJ2026-scattering-map}.

Let $M$ be an $m-$dimensional manifold with boundary, with boundary defining function $x_{\oc}$, and suppose $y=(y_1,..,y_{m-1})$ is a coordinate system of $\partial M$, which together with $x_{\oc}$ forms a local coordinate system of $M$ near the boundary. Then the space of 1-cusp vector fields, denoted by $\mathcal{V}_{\oc}$, is locally spanned over $C^\infty(M)$ by
\begin{align} \label{eq: 1-cusp vector fields, local}
x_{\oc}^3\partial_{x_{\oc}}, x_{\oc}\partial_{y_j}, j=1,2,...,m-1.
\end{align}
$\mathcal{V}_{\oc}$ gives rise to a vector bundle with \eqref{eq: 1-cusp vector fields, local} being its local frame, which is called 1-cusp tangent bundle, and denoted by $^{\oc}TM$.

The class of 1-cusp differential operators of order (at most) $k$, denoted by $\mathrm{Diff}_{\oc}^k(M)$, consists of polynomials of these vector fields of degree at most $k$:
\begin{align}  \label{eq: Diff 1c definition}
\begin{split}
\mathrm{Diff}_{\oc}^k(M) 
= \{ \sum_{\alpha+|\beta| \leq k}  a_{\alpha\beta}(x_{\oc},y)(x_{\oc}^3\partial_{x_{\oc}})^\alpha(x_{\oc}\partial_{y})^\beta : 
\\ a_{\alpha \beta} \in C^\infty(M), \alpha \in \N, \beta \in \N^{m-1} \}.
\end{split}
\end{align}

The 1-cusp cotangent bundle, denoted by $^{\oc}T^*M$ is the dual bundle of $^{\oc}TM$. It is locally spanned over $C^\infty(M)$ by 
\begin{align}
\frac{dx_{\oc}}{x_{\oc}^3}, \frac{dy_j}{x_{\oc}}, j=1,2,...,m-1.
\end{align}
Then $T^*M$ embeds into $^{\oc}T^*M$ canonically in the interior of $M$, giving it a symplectic structure naturally. In particular, one may write the canonical one form as
\begin{align}  \label{eq: 1c- canonical form}
\xi_{\oc} \frac{dx_{\oc}}{x_{\oc}^3} + \eta_{\oc} \cdot \frac{dy}{x_{\oc}},
\end{align}
where $(\xi_{\oc},\eta_{\oc})$ are coordinates of fibers on $^{\oc}T^*M$. 
We use $\overline{^{\oc}T^*M}$ to denote the compactification of $^{\oc}T^*M$, obtained by compactifying each fiber radially to be a ball
with boundary defining function
\begin{align}
\rho_{\oc} = (1+\xi_{\oc}^2+|\eta_{\oc}|^2)^{-1/2}.
\end{align}

Notice the minor difference with $\overline{^{\ps}T^*\R^{n+1}}$, which is compactified on both base and fiber level, is because the 1-cusp construction is happening on the manifold with boundary $M$, which is already `compactified a priori'.

The 1-cusp symbol class of differential order $m$ and decay (in fact growth) order $l$, denoted by $S_{\oc}^{m,l}(M)$, is defined to be smooth functions on ${}^{\oc}T^*M$ satisfying that for any $j,\alpha,k,\beta$ there exists a constant $C_{j\alpha k \beta}$ such that
\begin{align}
 |x_{\oc}^l \rho_{\oc}^{m-k-|\beta|} (x_{\oc}\partial_{x_{\oc}})^j\partial_y^\alpha\partial_{\xi_{\oc}}^k\partial_{\eta_{\oc}}^\beta a(x_{\oc},y,\xi_{\oc},\eta_{\oc})|
 \leq C_{j\alpha k \beta}.
\end{align}
And locally they quantize to be operators acting by
\begin{align}
\Op(a)u(x_{\oc},y) = (2\pi)^{-n}
\int e^{ i \xi_{\oc}\frac{x_{\oc}-x_{\oc}'}{x_{\oc}^3}+\eta_{\oc} \cdot \frac{y-y'}{x_{\oc}}}
a(x_{\oc},y,\xi_{\oc},\eta_{\oc}) u(x_{\oc}',y') d\xi_{\oc}d\eta_{\oc} \frac{dx_{\oc}'dy'}{(x_{\oc}')^{n+2}}.
\end{align}
The collection of all such operators is called 1-cusp pseudodifferential operators with differential order $m$ and decay order $l$, and denoted by $\Psi_{\oc}^{m,l}(M)$.
The operator wavefront set $\WF_{\oc}'(A)$ is the part of $\partial \overline{{}^{\oc}T^*M}$ defined by 
\begin{align}
\begin{split}
\WF'_{\oc}(A) = \Big(\{ q \in \partial \big( \overline{{}^{\oc}T^*M})\; | \text{There exists } \chi \in C^\infty(\overline{{}^{\oc}T^*M}) \text{ such that } \chi a \in \mathcal{S}(\overline{{}^{\oc}T^*M}) , \, \chi(q)=1 \}  \Big)^\complement. 
\end{split}
\end{align}

Next we define the ellipticity of symbols and operators.
\begin{definition} \label{eq:def-1c-ellipticity}
A symbol $a \in S^{m,l}_{\oc}(M)$ is called elliptic if
\begin{align*}
|a(x_{\oc},y,\xi_{\oc},\eta_{\oc})| \geq cx_{\oc}^{-l} \la (\xi_{\oc},\eta_{\mathrm{1c}}) \ra^m, \quad c>0 \text{ when }  |(\xi_{\oc},\eta_{\oc})|^{-1} \leq \epsilon \text{ or } x_{\oc} \leq \epsilon,
\end{align*}
for some $\epsilon>0,c>0$, and its quantization $A$ is also called elliptic in this case.
\end{definition}

Under the elliptic condition in Definition~\ref{eq:def-1c-ellipticity}, see \cite[Section~2.5]{zachos2022inverting}, its quantization $A$ has a parametrix $B \in \Psi_{\oc}^{-m,-l}(M)$ such that
\begin{align*}
AB-\Id, \; BA-\Id \in \Psi_{\oc}^{-\infty,-\infty}(M).
\end{align*}

One can now define 1-cusp Sobolev spaces
$H^{s,r}_{\oc}(M)$ (see \cite[Section~2.5]{zachos2022inverting}) for $s \geq 0$ by choosing $A \in \Psi_{\oc}^{s,0}(M)$ elliptic, and demanding
\begin{equation} \label{eq:1c-Sobolev-def}
u\in H^{s,r}_{\oc}(M) \Leftrightarrow u\in x_{\oc}^r L_{\oc}^2(M)\ \text{and}\ Au\in x_{\oc}^r L_{\oc}^2(M);
\end{equation}
here $L_{\oc}^2(M)$ is the $L^2$ space relative to the 1-cusp density $\frac{dx\,dy}{x_{\oc}^{n+2}}$.
They are equipped with norms:
$$
\|u\|^2_{H^{s,r}_{\mathrm{1c},h}}=\|x_{\oc}^{-r}u\|_{L_{\oc}^2}^2+\sum_{j+|\alpha|\leq s}\|(hx_{\oc}^3D_{x_{\oc}})^j(h^{1/2}x_{\oc}D_y)^\alpha u\|_{L_{\oc}^2}^2.
$$
The 1-cusp Sobolev spaces for other $s$ are defined via interpolation and duality.
Then 1-cusp pseudodifferential operators are bounded on these Sobolev spaces, namely for $A \in \Psi_{\oc}^{m,l}(M)$ and all $s,r$, $A$ is a bounded linear operator from $H^{s,r}_{\oc}$ to $H^{s-m,r-l}_{\oc}$.

\section{\texorpdfstring{The $\oc-\ps$ geometry and analysis}{The 1c-ps geometry and analysis}} 
\label{sec:1c-ps-geometry-distribution}

In this section, we recall the main geometric and analytic ingredients needed for the theory of $\oc-\ps$ Fourier integral operators from \cite[Section~3]{gell2022propagation} and \cite[Section~4, Section~5]{HJ2026-scattering-map}. This is used to characterize the Poisson operator sending the asymptotic data $f_\pm$ in \eqref{eq:u expansion} to the solution $u$.

\subsection{The link between the 1c-phase space and the Schr\"odinger equation}
\label{subsec:bulk-boundary}

We recall the phase space dynamics associated with the time-dependent Schr\"odinger equation and discuss how $\ocphase$ parametrizes its bicharacteristic lines.

Let $p$ be the left symbol of $P$, the interesting part is where the Schr\"odinger operator is non-elliptic, i.e., the characteristic variety:
\begin{equation}
    \Sigma(P):= p^{-1}(0) \subset \overline{{}^{\ps}T^*\R^{n+1}}.  
\end{equation}
The set 
\begin{equation} \label{eq:CharP-def}
    \Char(P) = \Sigma(P) \cap \partial(\overline{{}^{\ps}T^*\R^{n+1}})
\end{equation}
is where the microlocal propagation takes place.

One of the key ingredients of our analysis is the global source-sink structure of the rescaled Hamilton flow associated to $P$, which is denoted by $H_p^{2,0}$ in \eqref{eq:Hp20-def} below.
Its integral curves (more precisely, their closures) are called bicharacteristic lines.
As the discussion in \cite[Section~3]{gell2022propagation} shows, 
the bicharacteristic lines starting on the boundary with finite frequency will remain on the boundary for all time and only those bicharacteristics at fiber infinity of $\overline{{}^{\ps}T^*\R^{n+1}}$ will leave $\partial \overline{\R^{n+1}}$ and meet the metric perturbation, so we will restrict ourselves to be near the characteristic set in such a region.
In addition, such a bicharacteristic line will stay in the same time slice, or in other words, it reaches the endpoint `instantly' in terms of $t$.
In this region, we know $|\tau|^{1/2}$ and $|\zeta|_g$ are comparable and we can use $\rhops = \big( \sum_{j,k} g^{jk}(z, t) \zeta_j \zeta_k \big)^{-1/2}$.


Let $\hat{\tau} = \rho_{\ps}^2\tau \in \R$, and $\hat{\zeta} \in \R^{n-1}$ be coordinates parametrizing $\rho_{\ps} \zeta$ in the sphere with respect to $g$. A valid coordinate system with $(t,z)$ in a compact region is
\begin{equation} \label{eq:ps-coordinates}
    (t,z,\rho_{\ps},\hat{\tau},\hat{\zeta}).
\end{equation}

Let $\rho_{\base} = (1+t^2+|z|^2)^{-1/2}$ be the defining function of the spacetime infinity, then the rescaled Hamilton vector field of $p$
\begin{equation} \label{eq:Hp20-def}
    H_p^{2,0} = \rhops \rho_{\base}^{-1}H_p
\end{equation}
is a smooth vector field on $\overline{{}^{\ps}T^*\R^{n+1}}$ that is tangent to its boundary. 
The flow of $H_p^{2,0}$ has a global source-sink structure with the following `radial sets' being the source and sink.
\begin{definition}
 \label{defn: radial sets}
The radial set (of $P$) $\mathcal{R}$ is defined to be 
\begin{align}
\mathcal{R} = \{ q \in \Char(P): H^{2,0}_p \text{ vanishes at } q \}.
\end{align}
\end{definition}
Since the flow of $H^{2,0}_p$ has a source-sink structure, we have the decomposition of $\mathcal{R}$:
\begin{equation}
    \mathcal{R} = \Radm \cup \Radp \subset \overline{{}^{\ps}T_{\partial \overline{\R^{n+1}}}^*\R^{n+1}},
\end{equation}
where $\Radm$ is the source and $\Radp$ is the sink. 
In the region $|t/ z | \leq C$ for a constant $C>0$, they are given by 
\begin{equation}\label{eq:rad.corner.pm}
\mathcal{R}_\pm \supset \{ x_{\ps} = 0, \ t/|z| = \pm \rho_{\ps}/2, \ \hat \zeta/|\zeta| = \pm z/|z|, \ \tau/|\zeta|^2 = -1 \} \cap \{  \frac{|t|}{|z|} \leq C \}.
\end{equation}

So one can see that $\Radm$ is a graph over the southern hemisphere (i.e., the part of $\partial \overline{\R^{n+1}}$ with $t/\la z \ra \leq 0$) and $\Radp$ is a graph over the northern hemisphere (i.e., the part of $\partial \overline{\R^{n+1}}$ with $t/\la z \ra \geq 0$).
Both of them turn vertical (in the sense of tending to fiber infinity) when approaching the equator $\{ t/|z| = 0, 1/|z|=0 \}$.
Though they overlap at the equator on the base level,  they tend to the opposite direction in terms of the frequency, hence they remain disjoint on the phase space level.

As aforementioned, for each point on $\partial \overline{\R^{n+1}}$, there is an $n$-dimensional family of geodesics tending to it. On a phase space level, this corresponds to different bicharacteristic lines tending to or emanating from the same point in $\mathcal{R}_\pm$. Then they are distinguished via blowing up $\mathcal{R}_\pm$ within $\Sigma$:
\begin{equation}
    [\Sigma; \mathcal{R}_\pm].
\end{equation}
We denote the front face created by the blow up by $W_\pm$, then $W_\pm$ precisely parametrizes all bicharacteristic lines. Using $W_\pm$, we have the following correspondence between a point on a bicharacteristic line and the `endpoint' of this bicharacteristic line. 
\begin{prop}\cite[Lemma~3.4]{HJ2026-scattering-map}
\label{prop:Wpm-1c-identification}
$W_\pm$ is canonically diffeomorphic to $\overline{{}^{\oc}T^*\mathcal{R}_\pm}$, the radially compactified 1-cusp cotangent bundle over $\mathcal{R}_\pm$.
\end{prop}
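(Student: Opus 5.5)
The proof will go through local coordinates. The idea is to show that the blow-up of $\mathcal{R}_\pm$ within $\Sigma$ produces, over each point $q_0\in\mathcal{R}_\pm$, a front-face fiber that is naturally the 1-cusp cotangent fiber over $q_0$. We will work near a point of $\mathcal{R}_+$; the case of $\mathcal{R}_-$ is symmetric.

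\textbf{Step 1: coordinates and the flow near $\mathcal{R}_+$.} We use the explicit description \eqref{eq:rad.corner.pm}. Near $\mathcal{R}_+$ and outside $K$, the operator $P$ coincides with $P_0$, so the computation is for the free symbol $p_0=\tau+|\zeta|^2$. There are $n+1$ conserved quantities of $H_{p_0}$: the frequency $\zeta$ and the combination $z-2t\zeta$. The frequency $\zeta$ determines a point of $\mathcal{R}_+$, namely the endpoint direction $z/2t=\zeta$, parametrized by $\mathcal{R}_+\cong\partial\overline{\R^{n+1}}$ over the northern hemisphere. The combination $z-2t\zeta$ is the impact parameter. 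We choose coordinates $(x_{\oc},y)$ on $\mathcal{R}_+$, identified with a manifold with boundary whose boundary is the equator, where the radial set turns vertical. We then write the defining functions of $\mathcal{R}_+$ inside $\Sigma$: these are $x_{\ps}$ together with the deviations of $(t/|z|,\hat\zeta)$ from their values on $\mathcal{R}_+$.

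\textbf{Step 2: identifying the front face with the 1c fibers.} After blowing up, the projective coordinates on $W_+$ are the ratios of these deviations to $x_{\ps}$. We express them through the conserved quantities, roughly the impact parameter $z-2t\zeta$ suitably rescaled. We then verify that, under the change to $(x_{\oc},y)$ on $\mathcal{R}_+$, these ratios are exactly linear coordinates $(\xi_{\oc},\eta_{\oc})$ dual to $dx_{\oc}/x_{\oc}^3$ and $dy/x_{\oc}$. Concretely, $\xi_{\oc}$ and $\eta_{\oc}$ should be the coefficients of the canonical 1-form $\tau\,dt+\zeta\cdot dz$ restricted to the Lagrangian family of bicharacteristics ending at a given point. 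Equivalently, the identification sends a bicharacteristic $\gamma$ to the differential of its generating phase at its endpoint. This gives a diffeomorphism from the interior of $W_+$ to ${}^{\oc}T^*\mathcal{R}_+$.

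\textbf{Step 3: fiber infinity and canonicity.} We extend the diffeomorphism to the compactifications. The bicharacteristics reaching fiber infinity of $\overline{{}^{\ps}T^*\R^{n+1}}$ stay in a time slice and correspond to the corner of $W_+$ meeting the lift of $\{\rho_{\ps}=0\}$. We check that this corner maps to $\rho_{\oc}=0$, with the radial compactification matching. For canonicity, we verify that the map is intrinsic, i.e.\ independent of the choices. Changing the coordinates $(x_{\oc},y)$ by a 1-cusp-compatible diffeomorphism must change the projective coordinates by the induced linear map on ${}^{\oc}T^*$. This holds because the identification is defined via the symplectic structure: the normal bundle of $\mathcal{R}_+$ in $\Sigma$, divided by the flow direction and by dilations, is dual to $T\mathcal{R}_+$ through the symplectic form.

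The \textbf{main obstacle} is Step 2 near the equator, i.e.\ the boundary of $\mathcal{R}_+$, where $\mathcal{R}_+$ turns vertical. There the cubic weight $x_{\oc}^3\partial_{x_{\oc}}$ must emerge, and we must confirm that the projective coordinates degenerate at precisely the rates $x_{\oc}^{3}$ and $x_{\oc}$. To handle this, we will first do the computation for the free flow in the coordinates $t/|z|$, $\rho_{\ps}$ of \eqref{eq:rad.corner.pm}, using the relation $t/|z|=\rho_{\ps}/2$ on $\mathcal{R}_+$. Then we will transfer the result to $P$, since $\mathcal{R}_\pm$ lie outside $K$ where $P=P_0$.
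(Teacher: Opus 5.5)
Your overall route is the right one: reduce to $P_0$ near $\mathcal{R}_+$ (it lies over spacetime infinity, away from $K$), read off the projective coordinates of $[\Sigma;\mathcal{R}_+]$ from the conserved quantities $\zeta$ and $z-2t\zeta$, and do the real work at the equator. However, Step~3 assigns the boundary faces of $W_+$ incorrectly, and the check you propose there would fail.

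The face of $W_+$ over $\partial\mathcal{R}_+$ is where $W_+$ meets the lift of $\{\rho_{\ps}=0\}$. It consists of the bicharacteristics at ps-fibre infinity that stay in a time slice, and it corresponds to $\{x_{\oc}=0\}$ at \emph{finite} 1-cusp frequency, not to $\rho_{\oc}=0$. This is exactly the region of your ``main obstacle'' paragraph, and it is how the paper uses the identification: these bicharacteristics are points of $\overline{{}^{\oc}T^*_{\partial\overline{\R^n}}\R^n}$, with $t=-\frac12\xi_{\oc}$. Fibre infinity $\rho_{\oc}=0$ corresponds instead to the boundary of the inward-pointing normal hemispheres, i.e.\ normal directions tangent to $\{x_{\ps}=0\}$; these are the bicharacteristics that remain at spacetime infinity. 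Once the projective coordinates are shown to be linear in $(\xi_{\oc},\eta_{\oc})$ with coefficients smooth up to $x_{\oc}=0$, the radially compactified fibres match the closed hemispheres automatically.

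Second, the map itself is not correctly specified. Restricting $\tau\,dt+\zeta\cdot dz$ to the bicharacteristics ending at a fixed point of $\mathcal{R}_+$ gives $-|\zeta|^2\,dt+\zeta\cdot dz$ on every one of them, so it cannot separate them. The identification compatible with the (twisted) sojourn relations being Lagrangian is $\gamma\mapsto(\zeta,-d_\zeta\phi)$, where $\phi=z\cdot\zeta-t|\zeta|^2$ is the phase of the free Poisson kernel. This is the covector $-(z-2t\zeta)\cdot d\zeta$. With $x_{\oc}=|\zeta|^{-1}$ and $y=\hat\zeta$ it reads $\xi_{\oc}=x_{\oc}\,\hat\zeta\cdot(z-2t\zeta)$ and $\eta_{\oc}\cdot dy=-(z-2t\zeta)\cdot d\hat\zeta$; this sign is the one consistent with $t=-\frac12\xi_{\oc}$.

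Take $|z|^{-1}$ as boundary defining function near the equator and $\rho_{\ps}=|\zeta|^{-1}$. Along each free bicharacteristic, $|z|\,(t/|z|-\rho_{\ps}/2)=t-\rho_{\ps}|z|/2\to-\frac12\xi_{\oc}$, and $|z|\,(\hat\zeta-\hat z)\to-(z-2t\zeta)^{\perp}$ (the component orthogonal to $\hat\zeta$). The extra factor $x_{\oc}$ in the radial component is exactly what produces $dx_{\oc}/x_{\oc}^3$ rather than the scattering $dx_{\oc}/x_{\oc}^2$.

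Finally, the symplectic argument for canonicity in Step~3 does not work as stated. $H_p^{2,0}$ vanishes on $\mathcal{R}_+$, so there is no flow direction by which to quotient the rank-$(n+1)$ normal bundle. Moreover, the zero section of the fibres is not determined by the symplectic structure: moving the spacetime origin to $(t_1,z_1)$ shifts the identification by the smooth 1-cusp covector $d(z_1\cdot\zeta-t_1|\zeta|^2)$. The zero section is fixed by the explicit formula above.
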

Since we have a canonical identification between $\mathcal{R}_\pm$ and $\overline{\R^n}$ via projection to the base, $\overline{{}^{\oc}T^*\mathcal{R}_\pm}$ above can be replaced by $\overline{{}^{\oc}T^*\R^n}$. See \cite[Section~3.3]{HJ2026final} for a more explicit characterization of this correspondence.

\subsection{Geometry of the 1c-ps phase space}
The phase space for 1c-ps Lagrangian distributions (a class of operators that includes suitably microlocalized Poisson operators) is obtained by blowing up the corner of
\begin{align} \label{eq:calM0-def-product-bundle}
\mathcal{M}_0 =  \overline{^\ps{T^*\RR^{n+1}}} \times \overline{^\oc{T^*\RR^n}} 
\end{align}
at base infinity of $\overline{^\oc{T^*\RR^n}}$ and fiber-infinity of $\overline{^\ps{T^*\R^{n+1}}}$. Here the $\RR^n$ factor represents the interior of either $\Rp$ or $\Rm$.  We refer readers to \cite{melrose1993atiyah}\cite{Melrose1994} for more details about blow ups. Concretely, we define
\begin{align} \label{eq: 1c-ps cotangent bundle definition}
\mathcal{M}:= [ \mathcal{M}_0 ; \{ \rho_{\ps} = 0, x_{\oc} =0 \} ], \quad \rhops = \big( \sum_{j,k} g^{jk}(z, t) \zeta_j \zeta_k \big)^{-1/2}, 
\end{align}
and denote the blow down map by 
\begin{align}
\beta_{\oc-\ps}: \mathcal{M} \rightarrow \mathcal{M}_0.
\end{align}
The front face created by this blow-up shall be denoted $\ffocps$. 
The new smooth coordinate on $\ffocps$ introduced by the blow up is 
\begin{equation}\label{eq:sigma}
\varsigma = \frac{x_{\oc}}{\rho_{\ps}},
\end{equation}
or its reciprocal. In the interior of $\ffocps$, either $\xoc$ or $\rhops$ can be taken as a boundary defining function. 

The manifolds $\SM_0$ and $\SM$ have codimension 4 corners. However, we shall only be interested in a neighbourhood of a compact subset $K$ of the interior of $\ffocps$; in particular, we shall stay away from all the other boundary hypersurfaces. So, in effect, we are dealing with a manifold with boundary. 

The manifold $\mathcal{M}$ is endowed with a canonical symplectic structure\footnote{In this article, we allow symplectic structures to blow up or degenerate at the boundary} $\omega$ from $\mathcal{M}_0$ by lifting the symplectic form on $\mathcal{M}_0$, which in turn is equipped with the product symplectic structure from its two factors. We are particularly interested in the symplectic/contact structures on $\ffocps$ induced by this symplectic structure in the interior. To prepare for this, we shall specify coordinates to use in a neighbourhood of $K \subset \ffocps$. These will be $\xoc$ (a boundary defining function) and $\yoc$, which are base coordinates on $\RR^n$ near base infinity;  $\xioc$, $\etaoc$, their one-cusp dual coordinates as defined in Section~\ref{sec:the_1_cusp_pseudodifferential_algebra}; $z$, $t$, Euclidean space and time coordinates;  $\varsigma$ as defined in \eqref{eq:sigma}; and fibre coordinates near fibre-infinity, which we take to be $\tilde \tau = \tau \xoc^2$, $\hat \zeta = \zeta / |\zeta|$. We remark that we are also mostly interested in $(z, t)$ near the perturbation of the metric, which by assumption is a compact set in spacetime. On the other hand, $(\zeta, \tau)$ will be near infinity, since $\rhops = 0$ at $\ffocps$. To summarize, our coordinates are
\begin{equation}\label{eq:coordinates near ffocps}
\xoc, \quad \yoc, \quad \xioc, \quad \etaoc, \quad z, \quad t, \quad \varsigma, \quad \tilde \tau, \quad \hat \zeta.
\end{equation}


\begin{definition}[Admissible 1c-ps Lagrangian submanifold and 1c-ps fibred-Legendre submanifold]  \label{defn: admissible 1c-ps Lagrangian submanifold and 1c-ps fibred-Legendre submanifold}
We define an admissible 1c-ps Lagrangian submanifold of $\SM$ to be a $2n+1$-dimensional submanifold $\Lambda$ that is Lagrangian in the interior (the canonical symplectic form $\omega$ vanishes on it), such that 
\begin{itemize}
\item $\Lambda$ meets $\ffocps$ transversally, 
\item the differential $dt$ is non-vanishing on $\Lambda \cap \ffocps$, and 
\item  its closure is disjoint from all other boundary hypersurfaces of $\SM$ (other than $\ffocps$). 
\end{itemize}
We define a fibred-Legendre submanifold $L$ of $\ffocps$ to be the boundary of an admissible 1c-ps Lagrangian submanifold. In other words, there exists $\Lambda$ as above such that $L = \Lambda \cap \ffocps$. 
\end{definition}
See \cite[Proposition~4.4]{HJ2026-scattering-map} for the reason for the term `fibred' from a symplectic fibration.
The 1c-ps Lagrangian submanifolds that are used for our analysis are the twisted forward and backward sojourn relations which arise from the bulk-boundary duality we discussed in Section~\ref{subsec:bulk-boundary}.
As the calculation in \cite[Section~4.4]{HJ2026-scattering-map} shows, the rescaled Hamilton vector field $H_p^{2,0}$ is a smooth vector field tangent to the boundary of $[\Sigma; \SR]$ except at $W_\pm$, where it is transverse: inward-pointing at $W_-$ and outward-pointing at $W_+$. Using the non-trapping assumption we see that each point of $W_\pm$ gives rise to a smooth integral curve of  $H_p^{2,0}$, that travels from $W_-$ to $W_+$ in finite parameter time $s$. Let $q_-$ be a point of $W_-$, and let  $\gamma_{q_-}(s)$ be the integral curve of $H_p^{2,0}$ emanating from $q_-$ at time $s=0$, and arriving at $W_+$ at time $T(q_-) > 0$. Similarly, let $q_+$ be a point of $W_+$, and let $\mu_{q_+}(s)$ be the integral curve of $H_p^{2,0}$ emanating from $q_+$ at time $s=0$, and arriving at time $T'(q_+) < 0$. 
The forward sojourn relation is the subset of $\SM_0 =  \overline{^\ps{T^*\RR^{n+1}}} \times \overline{^\oc T^*{\Rm}} $ defined by 
\begin{equation}\label{eq:FSR def}
\FSR = \{ (\gamma_{q_-}(s), q_-) \mid q_- \in W_- , \ s \in [0, T(q_-)] \}. 
\end{equation}
Similarly, the backward sojourn relation is defined by 
\begin{equation}\label{eq:BSR def}
\BSR = \{ (\mu_{q_+}(s), q_+) \mid q_+ \in W_+ , \ s \in [T'(q_+), 0] \}. 
\end{equation}

Then our Lagrangian will be a lifted and microlocalized version of them, after twisting the sign of the 1-cusp frequencies.
Let $U_\pm \subset W_\pm \sim \overline{ ^{\oc}T^* \R^n} $ be open sets disjoint from fibre-infinity, and let $G_\pm \subset \overline{ ^{\ps}T^* \R^{n+1}}$ be open sets disjoint from spacetime infinity. Consider the microlocalized Lagrangians 
\begin{equation}\label{eq:microlocalized sojourn relns}
\Lambda_\pm = \beta_{\oc-\ps}^*\big(\{ (\gamma_{q_\pm}(s), -q_{\pm}) \in \overline{\Lambda_\pm} \mid q_{\pm} \in U_\pm, \ \gamma_{q_\pm}(s) \in G_\pm  \}\big)
\end{equation}
where $-q_\pm$ means changing the sign of the fiber part and we now view it as being in $\SM$ rather than $\SM_0$. That is, we take these sets to be the closure, in $\SM$, of their interiors lifted to $\SM$ from $\SM_0$ via the blowdown map $\beta_{\oc-\ps}$. 
This $\Lambda_\pm$ depends on the choice of $U_\pm$ and $G_\pm$ but we do not indicate this in the notation, regarding these choices of open sets as fixed. 
As shown in \cite[Proposition~4.12]{HJ2026-scattering-map}, this $\Lambda_\pm$ is admissible in the sense of Definition~\ref{defn: admissible 1c-ps Lagrangian submanifold and 1c-ps fibred-Legendre submanifold}.

The discussion above also gives rise to the following map $\msf{I}_\pm$ that we will use later.
For $\tilde{q} \in \Char(P)$, there is a unique $q_\pm \in W_\pm$ such that
\begin{equation}
(\tilde{q}, q_\pm) \in \overline{\Lambda'_\pm}.
\end{equation}
That is, $q_\pm$ is the endpoint of the $H_p^{2,0}$-flow starting from $\tilde{q}$ in the forward/backward direction.
Then we view $q_\pm$ as a point in $\overline{{}^{\oc}T^*\R^n}$ and define
\begin{equation} \label{eq:msf-I-def}
    \msf{I}_\pm(\tilde{q}) = q_\pm \in \overline{{}^{\oc}T^*\R^n}.
\end{equation}
In addition, for $\tilde{q}$ at fiber infinity of $\psphase$, we have $\msf{I}_\pm(\tilde{q}) \in \overline{{}^{\oc}T_{\partial\overline{\R^n} }^*\R^n}$.

We refer readers to \cite[Definition~4.5]{HJ2026-scattering-map} for the detailed definition of the parametrization of 1c-ps fibred Legendre submanifolds and only give a typical form of it here.
In addition, using \cite[Lemma~B.1]{HJ2026-scattering-map}, after a linear change of coordinates in $z$ (with $\tilde \zeta$ transforming correspondingly) we have a splitting of coordinates so that in a neighborhood of $q_0 \in \Legps$ the projection
 \begin{align}\label{eq:fullrank}
    \Lambda_\pm \ni (t,z,\tilde{{\tau}},\tilde{\zeta},x_{\oc},y_{\oc},\xi_{\oc},\eta_{\oc}) \rightarrow      (t,z',\tilde{{\zeta}}'',x_{\oc},y_{\oc})
 \end{align}
has full rank $2n+1$, where $z'=(z_1,...,z_k),\ \tilde{\zeta}''=(\tilde{\zeta}_{k+1},...,\tilde{\zeta}_n)$ for some $k \in \{ 1, \dots, n\}$. Geometrically, $\tilde{\zeta}''$ is present due to the degeneracy of the exponential map, or equivalently the presence of conjugate points.

Under this splitting of coordinates, using \cite[Proposition~4.6, Lemma~B.2]{HJ2026-scattering-map} the phase function used to parametrize $\Legps$ (together with its Lagrangian extension $\Lambda_\pm$) can be taken as the following normal form:
\begin{align} \label{eq:1c-ps-phase-normal-form}
\Phi_{\oc-\ps} = -\frac{t}{x_{\oc}^2}+\frac{z'' \cdot \tilde{\zeta}''-\tilde{\varphi}_1(t,z',\tilde{{\zeta}}'',x_{\oc},y_{\oc}) }{x_{\oc}}.
\end{align}

\subsection{The calculus of 1c-ps Fourier Integral Operators}
\label{subsec:1c-ps-calculus}
In this subsection, we recall the calculus of 1c-ps Fourier Integral Operators developed in \cite[Section~5]{HJ2026-scattering-map}.

\begin{definition} \label{defn: 1c-ps FIO}
Let $\Legps$ be a fibered-Legendre submanifold of $\SM$ and $\Lagps$ be its Lagrangian extension in Definition~\ref{defn: admissible 1c-ps Lagrangian submanifold and 1c-ps fibred-Legendre submanifold}. 
A 1c-ps Legendre distribution associated to  $\Legps$
of order $m$ is a distributional half-density that can be written (modulo $\mathcal{S}(\R^{n+1} \times \R^n)$) as a finite sum of oscillatory integrals of the form 
\begin{multline} \label{eq: 1c-ps FIO definition}
u(\msf{K}) =  (2\pi)^{- ( \frac{2n+1}{4} ) - \frac{k_0+k_1}{2} }
\Big(\int e^{ i( \frac{ \varphi_0(t,\theta_0) }{x_{\oc}^2} + \frac{\varphi_1(\msf{K}',\theta_0,\theta_1)}{x_{\oc}} )} 
x_{\oc}^{-(m+\frac{2k_0+k_1}{2})-\frac{1}{4}}
\\  \times a(\msf{K},\theta_0,\theta_1) d\theta_0d\theta_1 \Big)
|dtdz|^{1/2}|\frac{dx_{\oc}d\yoc}{x_{\oc}^{n+2}}|^{1/2}, \quad \msf{K} = (\xoc, \yoc, t, z), \quad \msf{K}' = (\yoc, t, z),
\end{multline}
where $\Phi_{\oc-\ps} = \frac{ \varphi_0(t,\theta_0) }{x_{\oc}^2} + \frac{\varphi_1(\msf{K},\theta_0,\theta_1)}{x_{\oc}}$ is a parametrization of $\Legps$, 
with $\theta_0 \in \R^{k_0}$, $\theta_1 \in \R^{k_1}$, and 
 $a \in C_c^\infty([0,\epsilon)_{\xoc} \times \R^{n-1}_{\yoc} \times \R^{n+1}_{t,z} \times \R^{k_0 + k_1}_{\theta_0, \theta_1})$ is assumed to be supported in the region where $\Phi_{\oc-\ps}$ parametrizes $\Legps$. 
 The set of such Legendre distributions is denoted $I_{\oc-\ps}^m(\R^{n+1} \times \R^n, \Lagps)$. 
 A linear operator $A$, mapping half-densities on $\R^n$ to half-densities on $\R^{n+1}$ is called a 1c-ps Fourier Integral Operator of order $m$ associated to $\Legps$ if its Schwartz kernel is a Legendre distribution of order $m$ associated to $\Legps$. 
\end{definition}

As shown in \cite[Proposition~5.4]{HJ2026-scattering-map}, the oscillatory integral in the definition above can be written as, modulo a Schwartz error, an oscillatory integral using another parametrization, on the region where both parametrizations are valid.

Let $S^{[m]}_{\oc-\ps}(\Legps)$ be the bundle incorporating the bundle $|N^*(\ffocps)|^{-m-\frac{2n+5}{4}}$, which models a section that is homogeneous of degree $-m-\frac{2n+5}{4}$ in $x_{\oc}$ and the choice of the boundary defining function and the Maslov bundle and $\Omega(\Legps)$ be the half-density bundle on $\Legps$. See \cite[Equation~(5.6)]{HJ2026-scattering-map} for more details.
Then the principal symbol map is 
\begin{align} \label{eq: invariant, symbol map, 1c-ps}
\begin{split}
\sigma^m_{\oc-\ps}: \quad   I^m_{\oc-\ps}(\R^{n+1} \times \R^n ,\Lagps; \Omega^{1/2}) 
\rightarrow 
 C^\infty(\Legps \cap \partial \mathcal{M}; 
\Omega^{1/2}(\Legps) \otimes S_{\oc-\ps}^{[m]}(\Legps)).
\end{split}
\end{align}
This principal symbol map gives the following short exact sequence in \cite[Proposition~5.5]{HJ2026-scattering-map}, which is the key ingredient to establish the calculus of 1c-ps Fourier integral operators.

\begin{align} \label{eq:1c-ps-exact-sequence}
\begin{split}
0 & \rightarrow I^{m-1}_{\oc-\ps}(\R^{n+1} \times \R^n,\Lagps)
\rightarrow I^{m}_{\oc-\ps}(\R^{n+1} \times \R^n, \Lagps)
 \xrightarrow{\sigma^m_{\oc-\ps}} C^\infty(\Legps \cap \partial \mathcal{M}; 
\Omega^{1/2}(L) \otimes S_{\oc-\ps}^{[m]}(\Legps)) \rightarrow 0.
\end{split}
\end{align}

Next we recall how pseudodifferential operators act on 1c-ps Fourier integral operators.
First we consider the case when we compose a ps-pseudodifferential operator from the left.
\begin{prop}{\cite[Theorem~5.7]{HJ2026-scattering-map}} 
\label{prop: PsiDO- 1c-ps FIO composition}
Suppose $Q \in \Psi_{\ps}^{m',0}(\R^{n+1})$ has parabolically homogeneous principal symbol at fibre-infinity. If $A \in I_{\oc-\ps}^{m}(\R^{n+1} \times \R^n, \Lagps; \Omega^{1/2})$, then we have  
\begin{equation}
QA \in I^{m+m'}_{\oc-\ps}(\R^{n+1} \times \R^n,\Lagps),
\end{equation}
with principal symbol 
\begin{align}  \label{eq: QA principal symbol, non-vanishing q, density version}
\sigma^{m+m'}_{\oc-\ps}(QA) = \sigma^{m'}_{\ps}(Q)|_{\Legps} \otimes \sigma^m_{\oc-\ps}(A),
\end{align}
where we lift the principal symbol of $Q$ to $\SM$, and view it as a section of $|N^*(\ffocps)|^{-m'}$ over $\Legps$. 
\end{prop}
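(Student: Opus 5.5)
The plan is to work locally and apply stationary phase directly to the oscillatory integral. By Definition~\ref{defn: 1c-ps FIO} and a partition of unity, it suffices to treat a single term $u$ of the form \eqref{eq: 1c-ps FIO definition}. Using \cite[Proposition~5.4]{HJ2026-scattering-map}, I may take the phase in the normal form \eqref{eq:1c-ps-phase-normal-form}, so that $\Phi_{\oc-\ps}=-t/x_{\oc}^2+(z''\cdot\tilde\zeta''-\tilde\varphi_1)/x_{\oc}$ with parameters $\theta_1=\tilde\zeta''$ and no $\theta_0$. I write $Q=\Op(q)$ with $q\in S^{m',0}_{\ps}$ and expand $Qu$ as an oscillatory integral in $(t',z',\tau,\zeta,\theta)$. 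The variables $(t',z')$ and $(\tau,\zeta)$ are then eliminated by stationary phase. At the critical point we have $(\tau,\zeta)=d_{t,z}\Phi_{\oc-\ps}$, which is of size $x_{\oc}^{-2}$ in $\tau$ and $x_{\oc}^{-1}$ in $\zeta$, i.e.\ parabolically of size $\rho_{\ps}^{-2}$ and $\rho_{\ps}^{-1}$ with $\rho_{\ps}\sim x_{\oc}$ on the interior of $\ffocps$.

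To see the structure, I rescale $\tau=x_{\oc}^{-2}\tilde\tau$ and $\zeta=x_{\oc}^{-1}\tilde\zeta$, and write $q(t,z,\tau,\zeta)=x_{\oc}^{-m'}\tilde q(t,z,\tilde\tau,\tilde\zeta,x_{\oc})$, which uses the parabolic homogeneity of the principal symbol. The resulting phase $(t-t')\tilde\tau/x_{\oc}^2+(z-z')\cdot\tilde\zeta/x_{\oc}+\Phi(t',z',\dots)$ is nondegenerate in $(t',\tilde\tau)$ with large parameter $x_{\oc}^{-2}$, and in $(z',\tilde\zeta)$ with parameter $x_{\oc}^{-1}$. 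Applying stationary phase with these two scales gives
\[
Qu=\int e^{i\Phi_{\oc-\ps}}\,x_{\oc}^{-(m+m')-\cdots}\,b\,d\theta,
\qquad
b\sim \tilde q\big|_{(\tilde\tau,\tilde\zeta)=(x_{\oc}^2\partial_t\Phi,\;x_{\oc}\partial_z\Phi)}\,a+O(x_{\oc}).
\]
The evaluation point is exactly the point of $\Legps$ parametrized by the critical set of $\Phi$. Moreover, the prefactors produced by the stationary phase (powers of $2\pi$ and Hessian determinants) cancel exactly against the Fourier normalization.

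I expect the main obstacle to be showing that the mixed scaling produces a genuine expansion in integer powers of $x_{\oc}$, rather than powers of $x_{\oc}^{1/2}$ or error terms that fail to be smooth up to $\ffocps$. The parabolic symbol estimates ensure that each $\partial_\tau$ gains $\rho_{\ps}^2\sim x_{\oc}^2$ and each $\partial_\zeta$ gains $x_{\oc}$. These gains match the $x_{\oc}^{-2}$ and $x_{\oc}^{-1}$ phase scales, so each term of the expansion improves by a factor of $x_{\oc}$. Checking this bookkeeping, and the decay away from the critical set by integration by parts, is where the care goes. Away from a neighborhood of the critical point, nonstationary phase gives a contribution that is Schwartz, i.e.\ in the residual class.

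Once this is done, $Qu$ is again of the form \eqref{eq: 1c-ps FIO definition} with the same phase and order $m+m'$, so $QA\in I^{m+m'}_{\oc-\ps}(\R^{n+1} \times \R^n,\Lagps)$. Its leading amplitude is $\tilde q|_{\Legps}\,a|_{\Legps}$. Since the principal symbol $\sigma^{m+m'}_{\oc-\ps}$ is defined from the leading amplitude at $x_{\oc}=0$, restricted to the critical set and with half-density and Maslov factors, we obtain \eqref{eq: QA principal symbol, non-vanishing q, density version}. In that formula, $x_{\oc}^{-m'}\tilde q$ is interpreted as a section of $|N^*(\ffocps)|^{-m'}$, and well-definedness follows from the exact sequence \eqref{eq:1c-ps-exact-sequence}.
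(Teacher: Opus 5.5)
Your argument is correct in outline, and it is the standard way to prove a left-composition result of this kind. The paper does not prove this statement itself; it imports it from \cite[Theorem~5.7]{HJ2026-scattering-map}, so there is no in-paper argument to compare yours against.

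\textbf{The obstacle you flag can be removed.} The two-scale bookkeeping you identify as the main difficulty disappears if you use the normal form \eqref{eq:1c-ps-phase-normal-form} and substitute $\tau=-x_{\oc}^{-2}+x_{\oc}^{-1}\nu$, $\zeta=x_{\oc}^{-1}\tilde\zeta$. Then $(t-t')\tau-t'/x_{\oc}^{2}=-t/x_{\oc}^{2}+(t-t')\nu/x_{\oc}$. After factoring out $e^{-it/x_{\oc}^{2}}$ you are left with an ordinary semiclassical stationary phase in $(t',z',\nu,\tilde\zeta)$ with the single parameter $h=x_{\oc}$. Its features are:
\begin{itemize}
\item the Hessian has the form $\bigl(\begin{smallmatrix}\ast & -I\\ -I & 0\end{smallmatrix}\bigr)$, so the determinant is $\pm 1$ and the signature is zero;
\item the amplitude is $\tilde q(t,z,-1+x_{\oc}\nu,\tilde\zeta,x_{\oc})\,a$;
\item the Jacobian $x_{\oc}^{-1-n}$ and the normalization $(2\pi)^{-(n+1)}$ exactly cancel the factor $(2\pi x_{\oc})^{n+1}$ produced by stationary phase.
\end{itemize}
The expansion in integer powers of $x_{\oc}$ is then automatic. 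The critical point $\nu=\partial_t\phi_1$ reproduces your evaluation point $\tilde\tau=x_{\oc}^2\partial_t\Phi_{\oc-\ps}$.

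\textbf{One attribution should be corrected.} Parabolic homogeneity of the principal symbol only fixes $\tilde q=x_{\oc}^{m'}q$ at $x_{\oc}=0$. You need $\tilde q$ to be smooth up to $x_{\oc}=0$ so that the resulting amplitude is $C^\infty$, as Definition~\ref{defn: 1c-ps FIO} requires. That smoothness holds only if $q$ is classical, which is implicit in the statement; compare the explicit hypothesis in Proposition~\ref{prop: vanishing principal symbol product}. Without it, the lower-order part of the amplitude is merely conormal in $x_{\oc}$. The leading term, and hence the principal symbol formula, is unaffected.
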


When the pseudodifferential operator has vanishing principal symbol on $\Lagps$, then we have the following refined characterization.
\begin{prop}{\cite[Theorem~5.8]{HJ2026-scattering-map}} \label{prop: vanishing principal symbol product}
Suppose $P \in \Psi_{\ps}^{m',0}(\R^{n+1})$, and its parabolically homogeneous principal symbol $p_{\hom}$ vanishes identically on the projection of $\Legps$ in $\psphase$. In addition, let $p$ be its left full symbol and assume that $\tilde{p}(t,z,\tilde{\tau},\tilde{\zeta})=x_{\oc}^{m'}p(t,z,\tau,\zeta)$ is smooth.\footnote{This in particular is satisfied by all differential operators.} 
If $A \in I_{\oc-\ps}^{m}(\R^{n+1} \times \R^n,\Lambda_\pm; \Omega^{1/2})$, then we have 
\begin{align*}
PA \in I^{m+m'-1}_{\oc-\ps}(\R^{n+1} \times \R^n,\Lambda_\pm; \Omega^{1/2}).
\end{align*}
The principal symbol of $PA$ is as follows: let 
\begin{align*}
 \sigma^m_{\oc-\ps}(A) = \textbf{a} \otimes |dx_{\oc}|^{-m-\frac{2n+5}{4}},
\end{align*}
with $\textbf{a}$ being a section of $\Omega^{1/2}(\Legps) \otimes M(\Legps)$, then
\begin{align} \label{eq: 1c-ps vanishing principal composition, principal symbol} 
\begin{split}
\sigma_{\oc-\ps}^{m+m'-1}(PA) = & (-i\mathscr{L}_{ \rHp} + i (\frac{m'-1}{2} + m + \frac{2n+5}{4} )(x_{\oc}^{-1} \rHp x_{\oc}) + p_{\sub})\textbf{a}
\\& \otimes |dx_{\oc}|^{-m-m'+1-\frac{2n+5}{4}}.
\end{split}
\end{align}
\end{prop}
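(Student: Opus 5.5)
The plan is to work locally, in a parametrization of $\Legps$ near a point $q_0$, and reduce to the classical Duistermaat--H\"ormander transport computation, keeping careful track of the $\xoc$-weights coming from the two-scale phase. Because of \cite[Proposition~5.4]{HJ2026-scattering-map} and a partition of unity, it suffices to treat a single oscillatory integral of the form \eqref{eq: 1c-ps FIO definition}. I will take the phase in the normal form \eqref{eq:1c-ps-phase-normal-form}, so that $\theta_0$ is absent and $\theta_1=\tilde\zeta''$. Write $u=\int e^{i\Phi}\xoc^{-\mu}a\,d\theta$ with $\mu=m+\frac{k_1}{2}+\frac14$. Since $P$ acts only in $(t,z)$, with left symbol $p$, we have
\begin{equation*}
P\bigl(e^{i\Phi}b\bigr)=e^{i\Phi}\,p\bigl(t,z,d_{t,z}\Phi+D_{t,z}\bigr)b .
\end{equation*}
I would expand the right-hand side by Taylor's formula in the frequency variable around $d_{t,z}\Phi$. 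Here $d_t\Phi\sim \xoc^{-2}$ and $d_z\Phi\sim\xoc^{-1}$, and by hypothesis $x_{\oc}^{m'}p=\tilde p(t,z,\tilde\tau,\tilde\zeta)$ is smooth in the rescaled variables. Consequently each $\tau$-derivative of $p$ gains $\xoc^{2}$ and each $\zeta$-derivative gains $\xoc$. This exactly matches the gain from $D_t$ and $D_z$ acting on the amplitude $b=\xoc^{-\mu}a$, which costs nothing in $\xoc$, so the expansion is organized in powers of $\xoc$.

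First I treat the zeroth-order term $p(d\Phi)\,b=\xoc^{-m'}\tilde p(t,z,\xoc^2\partial_t\Phi,\xoc\partial_z\Phi)\,b$. The assumption that $p_{\hom}$ vanishes on the projection of $\Legps$ means that $\tilde p|_{\xoc=0}$ vanishes on the critical set $C_\Phi=\{d_\theta\Phi=0\}$. Hence I can write
\begin{equation*}
\tilde p(\cdot,d\Phi)=\xoc\, r+\sum_j c_j\,\xoc\,\partial_{\theta_j}\Phi
\end{equation*}
with smooth $r$ and $c_j$. Note that $\xoc\,\partial_\theta\Phi$ is the natural rescaled object. The $\xoc r$ piece is one order lower and gives part of the subprincipal contribution; restricted to $C_\Phi$, its value is $p_{\sub}$ plus the $\xoc$-derivative of $\tilde p$. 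The $c_j$ pieces are handled by integrating by parts in $\theta$, using $\xoc\,\partial_{\theta_j}\Phi\, e^{i\Phi}=-i\,\partial_{\theta_j}e^{i\Phi}$. This produces $i\,\partial_{\theta_j}(c_j\,\xoc^{-\mu}a)$, again of order $m+m'-1$. So $PA\in I^{m+m'-1}_{\oc-\ps}$, and the remaining terms of the Taylor expansion are visibly of lower order still, or part of the principal one.

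Next I identify the principal symbol. The first-order Taylor terms $\partial_\tau p\,D_t b+\partial_\zeta p\cdot D_z b$ combine with the $\partial_\theta(c_j\,\cdot)$ terms. Restricted to $C_\Phi\cong\Legps$, they assemble into $-i$ times the derivative of $a$ along the Hamiltonian vector field lifted to $C_\Phi$, plus a divergence term. The $\partial_\theta c_j$ contributions and the second-order Taylor term $\tfrac12\partial^2 p$ are precisely what turn this derivative into the Lie derivative $\mathscr L_{\rHp}$ of the half-density $a\,|d_{C_\Phi}|^{1/2}$. This is the standard computation of H\"ormander and Duistermaat--H\"ormander, which I would invoke after checking that the rescaling is compatible with it. The new feature is that $\rHp$ does not annihilate $\xoc$. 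When the vector field falls on the weight $\xoc^{-\mu}$, and on the implicit weight $|dx_{\oc}|^{-m-\frac{2n+5}{4}}$ in the symbol, and when one accounts for the factor $\xoc^{-m'}$ and the half of it absorbed into the half-density normalization, a term $i\bigl(\frac{m'-1}{2}+m+\frac{2n+5}{4}\bigr)\xoc^{-1}\rHp\xoc$ appears. This yields \eqref{eq: 1c-ps vanishing principal composition, principal symbol}. Invariance under change of parametrization then follows because both sides transform as sections of the same bundle, by \eqref{eq:1c-ps-exact-sequence}.

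The main obstacle is this exponent bookkeeping. I need to show that the various powers of $\xoc$ coming from three sources combine into exactly the stated constant, with no leftover terms depending on the choice of boundary defining function. The three sources are the prefactor $\xoc^{-(m+\frac{2k_0+k_1}{2})-\frac14}$, the 1-cusp density $\xoc^{-(n+2)/2}$, and the $k_1$ powers arising from $\theta$-integration by parts. I would first verify the formula directly for the model phase $-t/\xoc^2+(z''\cdot\tilde\zeta''-\tilde\varphi_1)/\xoc$ with $P=D_t+\Delta_0$. I would then check that changing $\xoc\mapsto e^{f}\xoc$ alters both the weight term and the bundle factor consistently.
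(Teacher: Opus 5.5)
The paper does not prove this proposition; it quotes it from \cite{HJ2026-scattering-map}, so I am judging your outline on its own terms. Your skeleton is the standard one, and it is sound for the membership claim. With the normal-form phase and the parabolic bookkeeping ($\partial_\tau$ gains $\xoc^2$, $\partial_\zeta$ gains $\xoc$, each second derivative of $\Phi$ costs at most $\xoc^{-1}$), the division $\tilde p(\cdot,d\Phi)=\xoc r+\sum_j c_j\,\xoc\partial_{\theta_j}\Phi$ followed by integration by parts in $\theta$ does give $PA\in I^{m+m'-1}_{\oc-\ps}$. One correction: your identity should read $\xoc\partial_{\theta_j}\Phi\,e^{i\Phi}=-i\xoc\,\partial_{\theta_j}e^{i\Phi}$, and that factor $\xoc$ is exactly the gain.

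The gap is in the principal symbol, at the step you yourself call the main obstacle. The mechanism you propose for the term $i(\frac{m'-1}{2}+m+\frac{2n+5}{4})\xoc^{-1}\rHp\xoc$ produces nothing.
\begin{itemize}
\item $P$ acts only in $(t,z)$, and $\rHp$ is a function multiple of the Hamilton field of a function of the $\ps$ variables alone, taken for the product symplectic form. Hence $\rHp\xoc=0$.
\item So nothing differentiates $\xoc^{-\mu}$, the 1-cusp density, or the factor $\xoc^{k_1}$ hidden in $\delta(d_\theta\Phi)$. Your own remark that $D_t,D_z$ cost nothing in $\xoc$ already says this.
\item In the trivialization by $|d\xoc|$, with $\rHp$ normalized by $\xoc$, the weight term is therefore simply absent.
\end{itemize}
The coefficient is instead forced by invariance when $|N^*\ffocps|$ is retrivialized with another defining function $\tilde x=e^{f}\xoc$ of $\ffocps$ in $\SM$. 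Under that change, $\mathbf{a}\mapsto e^{\gamma f}\mathbf{a}$ with $\gamma=m+\frac{2n+5}{4}$. Also, $\rHp$ and $p_{\sub}$, normalized by the same defining function, acquire a factor $e^{(m'-1)f}$. Finally, $\mathscr{L}_{hV}=h\mathscr{L}_V+\frac12(Vh)$ on half-densities. Collecting the $\rHp f$ terms gives exactly the coefficient $\gamma+\frac{m'-1}{2}$. So your closing check under $\xoc\mapsto e^{f}\xoc$ is the actual proof of this part. It should replace the ``vector field falls on the weights'' heuristic, which comes from the scattering setting, where the operator does act in the boundary variable.

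Relatedly, ``$p_{\sub}$ plus the $\xoc$-derivative of $\tilde p$'' does not account for everything in $r|_{C_\Phi}$. Besides the order-$(m'-1)$ part of $p$, which is what eventually becomes $p_{\sub}$, $r$ also contains a piece recording how fast $p_{\hom}$ fails to vanish on the extension $\Lagps$ off $\ffocps$. That piece depends on the choice of extension: replacing $\Phi$ by $\Phi+\psi$ keeps $\Legps$, multiplies the amplitude by $e^{-i\psi}$, and shifts this piece by $\rHp\psi$. It has no counterpart in the stated formula, so you must either show it vanishes for $\Lambda_\pm$ or explain where it is absorbed.
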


We can also compose 1c-pseudodifferential operators from the right. As one would expect from the duality in Section~\ref{subsec:bulk-boundary}, the roles of the differential and decay orders are switched compared with the $\ps$-setting.
\begin{prop}{\cite[Theorem~5.9]{HJ2026-scattering-map}} \label{prop: PsiDO- 1c-ps FIO composition-1c-right}
Suppose $Q' \in \Psi_{\oc}^{-\infty,m'}(\R^n)$. If $A \in I_{\oc-\ps}^{m}(\R^{n+1} \times \R^n, \Lagps)$ 
then we have  
\begin{equation}
AQ' \in I^{m+m'}_{\oc-\ps}(\R^{n+1} \times \R^n,\Lagps),
\end{equation}
with principal symbol 
\begin{align}  \label{eq: AQ' principal symbol, non-vanishing q, density version}
\sigma^{m+m'}_{\oc-\ps}(AQ') =   \sigma^m_{\oc-\ps}(A)
\otimes \sigma^{m'}_{\oc}(Q')|_{\Legps}
\end{align}
where we lift the principal symbol of $Q'$ to $\SM$, and view it as a section of $|N^*(\ffocps)|^{-m'}$ over $\Legps$. 
\end{prop}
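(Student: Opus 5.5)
The plan is to prove this the same way as Proposition~\ref{prop: PsiDO- 1c-ps FIO composition}, with the roles of the two factors swapped. Work locally: by the short exact sequence \eqref{eq:1c-ps-exact-sequence} and a partition of unity it suffices to take $A$ given by a single oscillatory integral \eqref{eq: 1c-ps FIO definition} with phase $\Phi_{\oc-\ps} = \varphi_0(t,\theta_0)/\xoc^2 + \varphi_1(\msf{K}',\theta_0,\theta_1)/\xoc$. Write $Q'=\Op(q')$ with $q' \in S^{-\infty,m'}_{\oc}$. Since $Q'$ is residual in the differential order, its symbol is Schwartz in $(\xioc,\etaoc)$. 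This matches the fact that the 1c-fibre variables on $\Legps$ stay in a compact set: $U_\pm$ is disjoint from fibre infinity.

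Next, write the Schwartz kernel of $AQ'$ as
\[
\int A(t,z,\xoc',\yoc')\, K_{Q'}(\xoc',\yoc',\xoc,\yoc)\,\frac{d\xoc' d\yoc'}{(\xoc')^{n+2}}.
\]
Here $K_{Q'}$ is the 1-cusp quantization kernel, with phase $\xioc(\xoc'-\xoc)/\xoc'^3+\etaoc\cdot(\yoc'-\yoc)/\xoc'$ (right quantization, or left after the usual change). In the rescaled variables $X=(\xoc'-\xoc)/\xoc^3$ and $Y=(\yoc'-\yoc)/\xoc$, the combined phase is
\[
\Phi_{\oc-\ps}(\xoc',\yoc',\dots) + \xioc X + \etaoc\cdot Y .
\]
Taylor-expanding $\Phi_{\oc-\ps}$ in $(\xoc',\yoc')$ about $(\xoc,\yoc)$, the increments are $\xoc^3X$ and $\xoc Y$. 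Using $\partial_{\xoc}\Phi = O(\xoc^{-3})$ and $\partial_{\yoc}\Phi = O(\xoc^{-1})$, the linear terms are $\xoc^3\partial_{\xoc}\Phi\, X + \xoc\partial_{\yoc}\Phi \cdot Y$. These are exactly the 1c-fibre coordinates of the point of $\Legps$. The higher-order terms are lower order in $\xoc$, since each gains at least one power of $\xoc$.

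Then apply stationary phase in $(X,Y,\xioc,\etaoc)$. This is a nondegenerate quadratic phase $\xioc X+\etaoc Y$ with a perturbation. It sets $X=Y=0$ and evaluates $q'$ at $(\xoc,\yoc, -(\xoc^3\partial_{\xoc}\Phi, \xoc\partial_{\yoc}\Phi))$, which is the (sign-twisted) 1c-covector on $\Lagps$. What remains is an oscillatory integral with the original phase $\Phi_{\oc-\ps}$. Its amplitude is $a\cdot q'|_{\Legps}$ plus an expansion in powers of $\xoc$. The factor $\xoc^{-m'}$ coming from $q'\in S^{-\infty,m'}$ shifts the order from $m$ to $m+m'$. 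Finally, the densities $|dX\,dY|$ versus $\xoc^{-(n+2)}d\xoc'\,d\yoc'$ need to be checked to cancel, so that the half-density normalisation in \eqref{eq: 1c-ps FIO definition} is preserved. The leading term gives $\sigma^m_{\oc-\ps}(A)\otimes\sigma^{m'}_{\oc}(Q')|_{\Legps}$, viewed as a section of $|N^*(\ffocps)|^{-m'}$.

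The main obstacle is justifying this expansion uniformly as $\xoc\to0$. One has to check that the Taylor remainders of $\Phi_{\oc-\ps}$ in the scaled variables are genuinely $O(\xoc)$ uniformly in $(X,Y)$. The $1/\xoc^2$ term contributes $\partial_{\xoc}^2(\varphi_0/\xoc^2)\,\xoc^6X^2 = O(\xoc^2X^2)$, and the cross terms behave similarly. The large-$(X,Y)$ region is handled by the rapid decay of $K_{Q'}$ off the 1-cusp diagonal, integrating by parts using the Schwartz property of $q'$ in the fibre. Away from $\Legps$ the contributions are Schwartz by non-stationary phase. Once these points are settled, summing the asymptotic series and using \eqref{eq:1c-ps-exact-sequence} gives $AQ'\in I^{m+m'}_{\oc-\ps}$ with the stated symbol.
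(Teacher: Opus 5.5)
The paper gives no proof of this statement. It is imported from \cite[Theorem~5.9]{HJ2026-scattering-map}, so there is no in-paper argument to compare against, and your proposal has to be judged on its own. Your plan is the standard one. The kernel of $AQ'$ is $(Q')^{T}$ applied in the right variables to the oscillatory integral for $A$. The phase $\Phi_{\oc-\ps}$ and the $\theta$-integral are left untouched, and the new amplitude comes from Fourier inversion in the rescaled variables $X=(\xoc'-\xoc)/\xoc^{3}$, $Y=(\yoc'-\yoc)/\xoc$. This is correct in outline, with two points to repair when you write it out.

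First, the Taylor remainder of $\Phi_{\oc-\ps}$ is not $O(\xoc)$ uniformly in $(X,Y)$, and you do not need it to be. For example, the term $3\varphi_0\xoc^2X^2$ is of size one when $X\sim\xoc^{-1}$. What is true is the following. On $\{|\xoc^2X|,|\xoc Y|\le c\}$ the difference $\Phi_{\oc-\ps}(\xoc+\xoc^3X,\yoc+\xoc Y,\cdot)-\Phi_{\oc-\ps}(\xoc,\yoc,\cdot)$ is a smooth function of $(\xoc,X,Y)$. Its $\xoc$-derivatives grow at most polynomially in $(X,Y)$, and its value at $\xoc=0$ is exactly $\xi_\Phi X+\eta_\Phi\cdot Y$, where $\xi_\Phi=\xoc^3\partial_{\xoc}\Phi_{\oc-\ps}$ and $\eta_\Phi=\xoc\partial_{\yoc}\Phi_{\oc-\ps}$. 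Since the kernel of $Q'$ is Schwartz in $(X,Y)$, three things follow:
\begin{itemize}
\item the new amplitude is directly $C^\infty$ down to $\xoc=0$, so no large-parameter stationary phase or Borel summation is needed;
\item the complementary region contributes $O(\xoc^\infty)$;
\item at $\xoc=0$, Fourier inversion gives exactly $a\cdot(\xoc^{m'}q')(0,\yoc,-\xi_\Phi,-\eta_\Phi)$, which is the claimed symbol on the critical set.
\end{itemize}
Your separate non-stationary phase step ``away from $\Legps$'' is superfluous.

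Second, you silently use that $\xoc^{m'}q'$ is smooth up to $\xoc=0$, i.e.\ that $Q'$ is classical. For a merely conormal symbol in $S^{-\infty,m'}_{\oc}$, for instance one carrying a factor $2+\sin\log\xoc$, the resulting amplitude is not smooth in $\xoc$. Then $AQ'$ in general does not lie in the class of Definition~\ref{defn: 1c-ps FIO}. This hypothesis is implicit in the statement, since the symbol is treated as a section of $|N^*(\ffocps)|^{-m'}$, but you should state it explicitly.
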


Next we give a characterization of Poisson operators as 1c-ps Fourier integral operators.
The (forward and backward) Poisson operators $\mathcal{P}_\pm$ are the operators sending the `final state data' $f_\pm$ in \eqref{eq:u expansion} to the solution $u$.
With this calculus of 1c-ps Fourier integral operators, we have the following characterization of these two Poisson operators. 
We begin by introducing pseudodifferential operators microlocalizing the Poisson operator to the part associated to $\Lambda_\pm$, which is the part meeting the geometric perturbation.
Let $U_\pm$ and $G_\pm$ be as in \eqref{eq:microlocalized sojourn relns}, we choose 
\begin{equation} \label{eq:Q1c-Qps-def}
    Q_{\oc} \in \Psi_{\oc}^{0,0}(\R^n), \quad Q_{\ps} \in \Psi_{\ps}^{0,0}(\R^{n+1})
\end{equation}
such that $\WF_{\oc}'(Q_{\oc}) \subset U_-$ (hence disjoint from fiber-infinity in $\overline{{}^{\oc} T^* \R^n}$), and $\WF_{\ps}'(Q_{\ps}) \subset G_-$ (hence is away from spacetime infinity). We further assume that 
$Q_{\oc}$ is microlocally equal to the identity on all points in $\overline{{}^{\oc} T^* \R^n}$ whose corresponding (under the identification in Proposition~\ref{prop:Wpm-1c-identification}) bicharacteristics meet $\WF'_{\ps}(P - P_0)$ and
$Q_{\ps} \in \Psi_{\ps}^{0,0}(\R^{n+1})$ is microlocally equal to the identity on $\WF'(P - P_0)$. 
Then our characterization of the microlocalized forward and backward Poisson operators is as follows.
\begin{prop}{\cite[Proposition~5.16]{HJ2026-scattering-map}}
\label{prop:microlocalized Poisson are 1c-ps FIOs} 
Let $Q_{\oc}$ and $Q_{\ps}$ be as above, then 
\begin{equation}
    Q_{\ps} \Poipm Q_{\oc} \in I^{-3/4}_{\oc-\ps}(\R^{n+1} \times \R^n, \Lambda_\pm).
\end{equation}
\end{prop}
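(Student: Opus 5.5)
This is a result recalled from \cite[Proposition~5.16]{HJ2026-scattering-map}, so the plan is to reconstruct the argument that goes through a parametrix. I will treat $\Poim$; $\Poip$ follows by reversing time.

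\textbf{Step 1: construct the oscillatory ansatz.} Choose a parametrization of $\Legps$ of the normal form \eqref{eq:1c-ps-phase-normal-form}. Build $A\in I^{-3/4}_{\oc-\ps}(\R^{n+1}\times\R^n,\Lambda_-)$ with kernel of the form \eqref{eq: 1c-ps FIO definition}.
\begin{itemize}
\item Since $\Lambda_-$ lies inside the lift of $\Sigma(P)$, $p_{\hom}$ vanishes on the projection of $\Legps$.
\item Proposition~\ref{prop: vanishing principal symbol product} therefore gives $PA\in I^{-3/4+2-1}_{\oc-\ps}$.
\item Its principal symbol is the transport operator $-i\mathscr{L}_{\rHp}+(\dots)+p_{\sub}$ applied to $\sigma(A)$.
\end{itemize}

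\textbf{Step 2: solve the transport equations.} Solve the leading transport equation along the $H_p^{2,0}$-flow. Take initial data at $W_-$ (the $s=0$ end of $\FSR$) so that $A$ reproduces the asymptotic data $f_-$. Concretely, match the free Poisson kernel $(4\pi i t)^{-n/2}e^{i|z|^2/4t}f_-(z/2t)$ near $\Radm$, which is itself a 1c-ps Legendre distribution with explicit symbol.

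Then iterate using the exact sequence \eqref{eq:1c-ps-exact-sequence}. At each stage, remove the next error term by solving the same transport equation with a forcing term and zero initial data. Borel summation then gives $PA\in I^{-\infty}_{\oc-\ps}$, a Schwartz error.

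\textbf{Step 3: compare with the true Poisson operator.} Compare $A$ with $Q_{\ps}\Poim Q_{\oc}$. Set $u=\Poim Q_{\oc}f-A f$. Then $Pu$ is given by a kernel that is smooth and rapidly decaying microlocally, plus terms supported off $\WF'(Q_{\ps})$. Moreover $u$ has vanishing incoming data, since $Q_{\oc}$ is microlocally the identity on bicharacteristics meeting the perturbation, and near $\Radm$ the flow is free.

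Apply uniqueness for the forward problem: the propagation/radial estimates of the ps-calculus, namely the below-threshold estimate at $\Radm$ and propagation along the non-trapping flow. These show that $Q_{\ps}u$ has a residual kernel. Hence $Q_{\ps}\Poim Q_{\oc}=Q_{\ps}A$ modulo residual terms. By Proposition~\ref{prop: PsiDO- 1c-ps FIO composition} the latter lies in $I^{-3/4}_{\oc-\ps}$.

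\textbf{Main obstacle.} The hardest step is the initial matching at $W_-$ and fixing the order $-3/4$. This needs the explicit free kernel to be rewritten in the coordinates \eqref{eq:coordinates near ffocps}, with the density and order bookkeeping that determines the power $x_{\oc}^{-(m+\cdots)-1/4}$. The cross-over region where the flow leaves $\Radm$ transversally must be handled in the blown-up space $[\Sigma;\mathcal R_-]$.
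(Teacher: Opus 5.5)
\textbf{There is a genuine gap in Step 2, and it propagates to Step 3.} Your overall architecture is the one behind the cited result: build a transport-equation parametrix in $I_{\oc-\ps}(\Lambda_-)$, then identify it with $\Poim$ using propagation and radial estimates. But Step 2 cannot be carried out in this calculus as you formulate it.

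By \eqref{eq:microlocalized sojourn relns}, $\Lambda_-$ only contains points with $\gamma_{q_-}(s)\in G_-$, and $G_-$ avoids spacetime infinity. Correspondingly, every element of $I_{\oc-\ps}(\R^{n+1}\times\R^n,\Lambda_-)$ has amplitude compactly supported in $(t,z)$ (Definition~\ref{defn: 1c-ps FIO}). Since $\Radm$ and $W_-$ lie over $\partial\overline{\R^{n+1}}$, there is no point of $\Legps$ at which to impose `initial data at $W_-$'. Likewise, the free Poisson kernel near $\Radm$ is not in this class.

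In fact the conclusion of your Step 2 is impossible. Suppose $A\in I^{-3/4}_{\oc-\ps}(\R^{n+1}\times\R^n,\Lambda_-)$ and $PA\in I^{-\infty}_{\oc-\ps}$. By Proposition~\ref{prop: vanishing principal symbol product}, the principal symbol of $A$ solves the \emph{homogeneous} transport equation along each bicharacteristic segment of $\Legps$. It is also compactly supported on that segment, so it vanishes. Iterating with \eqref{eq:1c-ps-exact-sequence} gives $A\in I^{-\infty}_{\oc-\ps}$. For the same reason, $u=\Poim Q_{\oc}f-Af$ in Step 3 does not have vanishing incoming data: near $\Radm$, $Af$ is negligible and $u$ is just $\Poim Q_{\oc}f$.

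\textbf{The missing idea} is to trade the initial condition for a forcing term living in a compact region of spacetime. This is how the cited construction works; see \eqref{eq:Kpm-sum}--\eqref{eq:def-Q1-minus}.
\begin{itemize}
\item Take $\tilde Q_{-,\ps}\in\Psi^{0,0}_{\ps}(\R^{n+1})$ microlocally equal to the identity near $\Radm$, with $\WF'_{\ps}(\tilde Q_{-,\ps})$ in the incoming region and disjoint from $\WF'_{\ps}(P-P_0)$.
\item Since $P_0\mathcal P_0=0$, the operator $w=\Poim Q_{\oc}-\tilde Q_{-,\ps}\mathcal P_0Q_{\oc}$ satisfies $Pw=-[P,\tilde Q_{-,\ps}]\mathcal P_0Q_{\oc}$ modulo residual terms.
\item $w$ genuinely has zero incoming data: near $\Radm$, $\tilde Q_{-,\ps}$ is the identity and $\Poim Q_{\oc}$ agrees with the free solution.
\item Up to the cutoff $\chi$ and the sign convention, this forcing is $Q_{-,1}\mathcal P_0Q_{\oc}$. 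It is microsupported where $\Lambda_-$ coincides with the free sojourn relation, and it lies in $I^{1/4}_{\oc-\ps}(\Lambda_-)$ with explicit symbol.
\item The order $-3/4=1/4-2+1$ is read off from this forcing, with no bookkeeping at $W_-$.
\end{itemize}

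One then proceeds as follows. Solve each transport equation with this forcing (respectively the previous error) and zero data for $s\ll0$; this never leaves the region where $\Lambda_-$ is admissible. Borel sum to get $K_-$. Then apply your uniqueness argument to $w-K_-$.

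This identifies $Q_{\ps}\Poim Q_{\oc}$ with $Q_{\ps}K_-$ plus $Q_{\ps}\tilde Q_{-,\ps}\mathcal P_0Q_{\oc}$. The second term is an explicit element of $I^{-3/4}_{\oc-\ps}$, and is residual if the two microsupports are chosen disjoint. Without this device, the `cross-over region in $[\Sigma;\Radm]$' you flag would require extending the 1c-ps calculus to spacetime infinity, which the framework does not provide.
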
 

The reason for investigating forward and backward Poisson operators is that the scattering map can be constructed out of them directly. More precisely, we have:
\begin{equation} \label{eq:S formula} 
 S = i(2\pi)^n \mathcal{P}_+^* [P,Q_+] \mathcal{P}_-,
\end{equation}
where $Q_+ \in \Psi_{\ps}^{0,0}(\R^{n+1})$ is microlocally equal to the identity on a neighborhood of $\mathcal{R}_+$ with $\WF'_{\ps}(Q_+)$ contained in a slightly enlarged neighborhood of $\Radp$. In particular, $\WF'_{\ps}([P,Q_+])$ is away from both of $\mathcal{R}_\pm$. See \cite[Section~7]{gell2022propagation} (with a correction on an overall sign in \cite[Eq~(1.7)]{HJ2026-scattering-map}).

\section{The 1c-1c analysis and the structure of the scattering map}
\label{sec:1c-1c-calculus}

In this section, we recall the main geometric and analytic ingredients needed for the theory of $\oc-\oc$ Fourier integral operators from \cite[Section~6, Section~7]{HJ2026-scattering-map}. This is used to characterize our scattering map.

\subsection{Geometry of the 1c-1c phase space}

Let $X$ be a manifold with boundary $\partial X$ (in our case, $X=\overline{\R^n}$), the b-double space introduced by Melrose \cite{melrose1993atiyah} is defined to be
\begin{align} \label{eq: defn  b-double space}
X_b^2 : = [ X \times X ; \partial X \times \partial X ],
\end{align}
and the blow down map $X_b^2 \rightarrow X^2$ is denoted $\beta_b$.
Then the b-lifted 1c-1c cotangent bundle is
\begin{align} \label{eq:def-lifted-ococ-bundle}
\ococb:= \beta_b^*(\overline{{}^{\oc}{T^*X}} \times \overline{{}^{\oc}{T^*X}}),
\end{align}
where the right hand side is viewed as pulling back a bundle over $X^2$ to $X_b^2$.
We denote the corresponding projection map $\ococb \to \ocphase \times \ocphase$ by $\tilde{\beta}_b$.
We denote the lift of $\partial X \times X, X \times \partial X, \partial X \times \partial X$ under $\beta_b$ by $\mathrm{lb}$ (`left boundary'), $\mathrm{rb}$ (`right boundary') and $\mathrm{bf}$ (`b-face') respectively. We denote the part of the bundle $\ococb$ lying over $\mathrm{bf}$ by $\ffococ$. 
Let $(x_{\oc,1}, y_1;x_{\oc,2}, y_2)$ be coordinates on $X^2$, then we use the notation
\begin{align*}
x_{\oc} = x_{\oc,1}, \; \sigma = \frac{x_{\oc,1}}{x_{\oc,2}},
\end{align*}
and $\msf{X}:=(x_{\oc},\sigma, y_1, y_2)$ forms a local coordinate system of $X_b^2$ on the region $\{ C^{-1} \leq \sigma \leq C \}$ for a fixed $C$, which is the interesting part for us.
The canonical 1-form on $\ococb$ is given by the sum of the canonical one form lifted from the left and right factors:
\begin{equation}\label{eq:1c1c-1form--1}
 \alpha_{\oc-\oc} = \xioco \frac{d\xoco}{\xoco^3} + \etaoco \frac{d\yoco}{\xoco} + \xioct \frac{d(\xoco/\sigma)}{(\xoco/\sigma)^3} + \etaoct \frac{d\yoct}{(\xoco/\sigma)},
\end{equation}
where we used  $\xoct = \xoco/\sigma$. 
Its differential gives our symplectic form 
\begin{equation}\label{eq:omega 1c1c}
\omega_{\oc-\oc} = d \alpha_{\oc-\oc}. 
\end{equation}
Then the class of Legendre and Lagrangian submanifolds we will use is the following.

\begin{definition}[Admissible 1c-1c Lagrangian submanifold and 1c-1c fibred-Legendre submanifold]  \label{definition:admissible-1c1cLag}
We define an admissible $\oc-\oc$ Lagrangian submanifold of $\ococb$ to be a $2n$-dimensional submanifold $\Uplambda$ that is Lagrangian in the interior (that is, the symplectic form $\omega_{\oc-\oc}$ from \eqref{eq:omega 1c1c} vanishes on it), such that 
\begin{itemize}
\item $\Uplambda$ meets $\ffococ$ transversally, 
\item the differential $d\xioco$ is nonvanishing on $\Uplambda \cap \ffococ$, and 
\item  its closure is disjoint from all other boundary hypersurfaces of $\ococb$ (other than $\ffococ$). 
\end{itemize}
We define a fibered-Legendre submanifold $\SL$ of $\ffococ$ to be the boundary of an admissible 1c-1c Lagrangian submanifold. In other words, there exists $\Uplambda$ as above such that $\SL = \Uplambda \cap \ffococ$. 
\end{definition}

\subsection{The calculus of 1c-1c Fourier integral operators and the scattering map}
\label{subsec:1c-1c-calculus}
In this subsection, we recall the calculus of 1c-1c Fourier integral operators, which is used to characterize our scattering map.

See \cite[Definition~6.4, 6.6]{HJ2026-scattering-map} for details of the parametrization of 1c-1c Lagrangian submanifolds.
From \eqref{eq:S formula}, the 1c-1c Fourier integral operator that we consider will arise from composing a 1c-ps Fourier integral operator and its adjoint.
So if we use the phase function in \eqref{eq:1c-ps-phase-normal-form} in the 1c-ps Fourier integral operators, then the corresponding composition will be the difference of two such functions. Then we have the following normal form for parametrizing 1c-1c Legendre submanifolds (and their Lagrangian extensions):
\begin{align} \label{eq:1c-1c-phase-normal-form}
\Phi_{\oc-\oc}(\msf{K},t, \theta_1) =  \frac{ t(1-\sigma^2) }{\xoco^2} + \frac{\varphi_1(\msf{K},t,\theta_1)}{\xoco} , \quad \msf{K} = (\xoco, \sigma, \yoco, \yoct).
\end{align}


\begin{definition} \label{def:1c-1c-FIO}
Let $\Uplambda$ be an admissible Lagrangian submanifold of $\ococb$ as in Definition~\ref{definition:admissible-1c1cLag}, with boundary $\SL$. 
We define 
$I^{m}_{\oc-\oc}(X_b^2, \SL; \Omega_{\oc-\oc}^{1/2})$, i.e., the space of 1c-1c fibered-Legendre distributions of order $m$, to be the space of operators with Schwartz kernel given (modulo a Schwartz function) by a finite sum of terms of the form 
\begin{align} \label{eq: 1c-1c FIO, local form}
\begin{split}
 & (2\pi)^{-\frac{n+(k_0+k_1-e)}{2}} \Big(\int 
e^{i\Phi_{\oc-\oc}(\msf{K},v,w)} 
 a(\msf{K},v,w)
\\& x_{\oc,1}^{-m-\frac{2k_0+(k_1-e)}{2}+ \frac{n+1}{2}} dvdw \Big) 
|\frac{d\sigma dy_{\oc,2}}{x_{\oc,1}^{n+1}}|^{1/2}|\frac{dx_{\oc,1}dy_{\oc,1}}{x_{\oc,1}^{n+2}}|^{1/2},
\end{split}
\end{align}
where $\Phi_{\oc-\oc}$ is a phase function 
parametrizing $\mathcal{L}$. 
Moreover,  $a \in C^\infty_c([0,\infty)_{x_{\oc}} \times [C^{-1},C]_{\sigma} \times \mathbb{S}^{n-1} \times \mathbb{S}^{n-1} \times \R^{k_0} \times \R^{k_1})$, where $v \in \R^{k_0},w \in \R^{k_1}$. 
\end{definition}

In the same way as the 1c-ps case above, 
over a region on which two phase functions are clean parametrizations, the oscillatory integral above using one of them can be written as an oscillatory integral using the other as the phase function, modulo a Schwartz error. In addition, allowing a clean phase function in \eqref{eq: 1c-1c FIO, local form} instead of using non-degenerate phase functions only does not enlarge the operator class, see \cite[Remark~7.2, Proposition~7.3]{HJ2026-scattering-map}.

In a manner parallel to the 1c‑ps setting, let $S_{\oc-\oc}^{[m]}(\SL)$ be the line bundle that is homogeneous of degree $-m- \frac{n+1}{2}$ in $x_{\oc}$, incorporating a factor for the choice of the boundary defining function and the Maslov bundle and let $\Omega^{1/2}(\SL)$ be the half-density bundle on $\SL$. See \cite[Section~7.2]{HJ2026final} for details. 
We can view the principal symbol map as
\begin{align} \label{eq: invariant, symbol map, 1c-1c}
\begin{split}
\sigma^m_{\oc-\oc}: \quad  I^m_{\oc-\oc}(X_b^2,\Uplambda;\Omega_{\oc-\oc}^{1/2}) 
\rightarrow 
 C^\infty(\SL \cap \partial X_b^2; 
\Omega^{1/2}(\SL) \otimes S_{\oc-\oc}^{[m]}(\SL)).
\end{split}
\end{align}
The fact that this principal symbol map captures the leading order singularity can be summarized in the following short exact sequence from \cite[Proposition~7.4]{HJ2026-scattering-map}:
\begin{align} \label{eq:1c-1c-FIO-short-exact-sequence}
\begin{split}
0  \rightarrow I^{m-1}_{\oc-\oc}(X_b^2,\SL)
\rightarrow I^{m}_{\oc-\oc}(X_b^2,\SL)
\xrightarrow{\sigma^m_{\oc-\oc}} C^\infty(\SL \cap \partial X_b^2; 
\Omega^{1/2}(\SL) \otimes S_{\oc-\oc}^{[m]}(\SL)) \rightarrow 0.
\end{split}
\end{align}

Next we recall results concerning the composition of $\oc-\oc$ Fourier integral operators.
Let $\mathcal{L}_i = C_i' \cap \ffococ$, $i=1,2$ be admissible 1c-1c Legendre submanifolds with $C_i'$ being the corresponding Lagrangian submanifolds.
Suppose $C_1,C_2$ are lifts of canonical relations in $\leftidx{^{\oc}}{T^*X} \times \leftidx{^{\oc}}{T^*X}$ to $\ococb$, and $C_2 \times C_1$ intersects the lift to $\ococb \times \ococb$ of the diagonal in the second and third components of
\begin{align*}
\leftidx{^{\oc}}{T^*X} \times \leftidx{^{\oc}}{T^*X} \times \leftidx{^{\oc}}{T^*X} \times \leftidx{^{\oc}}{T^*X}
\end{align*}
transversally. 
Recall from \cite[Theorem~4.2.2]{FIO1} 
that there is a natural bilinear map giving the product of density bundles:
\begin{align}  \label{eq: symbol product, bundle maps}
\begin{split}
& \Omega^{1/2}(\mathcal{L}_2) \otimes S^{[m_2]}(\mathcal{L}_2)
 \times \Omega^{1/2}(\mathcal{L}_1) \otimes S^{[m_1]}(\mathcal{L}_1) 
 \\ & \rightarrow  \Omega^{1/2}((C_2 \circ C_1)' \cap \ffococ) \otimes S^{[m_2+m_1]}( (C_2 \circ C_1) ' \cap \ffococ),
\end{split}
\end{align}
where $(C_2 \circ C_1)$ denotes the composition of canonical relations.
Denoting this bilinear map by `$\times$', we have the following composition law in the calculus of $\oc-\oc$ Fourier integral operators.
\begin{prop}{Adapted version of \cite[Proposition~7.5]{HJ2026-scattering-map}}
\label{prop: 1c-1c transversal composition}
Suppose $C_2 \times C_1$ satisfies the transversal intersection condition above, and $A_1 \in I^{m_1}_{\oc-\oc}(X^2_b,C_1'), A_2 \in I^{m_2}_{\oc-\oc}(X^2_b,C_2')$, then we have
\begin{align*}
A_2A_1 \in I^{m_1+m_2}_{\oc-\oc}(X_b^2, (C_2 \circ C_1)').
\end{align*}
And when they have $a_1,a_2$ as their principal symbols respectively, then $A_2A_1$ has principal symbol
\begin{align*}
a_2 \times a_1.
\end{align*}
In addition, when $C_1,C_2$ are both graphs of symplectomorphisms, if $A_1,A_2$ are elliptic (in the sense defined after \eqref{eq:1c-1c-FIO-short-exact-sequence}) at $q_1',q_2'$ such that $(q_2,q_1)$ is sent to $q \in C_2 \circ C_1$, then $A_2A_1$ is elliptic at $q'$.
\end{prop}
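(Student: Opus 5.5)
The plan is to prove the composition law at the level of Schwartz kernels, following the proof of Hörmander's transversal composition theorem \cite[Theorem~4.2.2]{FIO1} but keeping track of the two-scale structure ($\xoco^{-2}$ and $\xoco^{-1}$) of the 1c-1c phases. First, I would use a partition of unity and the fact (recalled after Definition~\ref{def:1c-1c-FIO}) that changing the parametrization only costs a Schwartz error. This reduces to the case where $A_1,A_2$ are each a single oscillatory integral of the form \eqref{eq: 1c-1c FIO, local form}, with phases $\Phi_1(\xoco,\sigma_1,y_1,y_2,v_1,w_1)$ and $\Phi_2(\xoco',\sigma_2,y_2',y_3,v_2,w_2)$ in the normal form \eqref{eq:1c-1c-phase-normal-form}. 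Schwartz and off-$\ffococ$ pieces compose to residual terms; this uses the fact that the Lagrangians avoid all boundary faces other than $\ffococ$.

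Next I would write the kernel of $A_2A_1$ as an integral over the middle variable $(x_{\oc,m},y_m)$ against the 1-cusp density $\frac{dx_{\oc,m}dy_m}{x_{\oc,m}^{n+2}}$. I would parametrize $x_{\oc,m}=\xoco/\sigma'$, so that $\sigma_1=\sigma'$ and $\sigma_2=\sigma/\sigma'$ with $\sigma$ the new left/right ratio. The key step is to regard $(\sigma',y_m)$, together with $(v_1,v_2,w_1,w_2)$, as new phase variables for the combined phase
\begin{equation*}
\Phi(\msf{K},\sigma',y_m,v,w)=\Phi_2(\xoco,\sigma/\sigma',y_m,y_3,v_2,w_2)+\Phi_1(\xoco\sigma'/\sigma\cdot\ldots)
\end{equation*}
(the precise rescaling being fixed by $x_{\oc,m}=\xoco/\sigma'$). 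Expanding in $\xoco$, the $\xoco^{-2}$ part depends only on $(t$-type variables$,\sigma,\sigma')$, which plays the role of $v\in\R^{k_0}$. The $\xoco^{-1}$ part depends on $(y_m,w)$, which plays the role of $w\in\R^{k_1}$. I would then check that $\Phi$ is a phase function parametrizing $(C_2\circ C_1)'\cap\ffococ$. This is a standard computation: critical points in $(\sigma',y_m)$ say that the middle 1c-covectors agree, and the rank condition on $d(\partial_{\text{fiber}}\Phi)$ at the critical set is equivalent to transversality of $C_2\times C_1$ with the lifted diagonal. Counting powers of $\xoco$ and the numbers $k_0,k_1$ of phase variables then shows that the amplitude has exactly the order prescribed by Definition~\ref{def:1c-1c-FIO} with $m=m_1+m_2$. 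The Jacobian from $x_{\oc,m}=\xoco/\sigma'$ and the density factors combine into the b-half-densities of \eqref{eq: 1c-1c FIO, local form}.

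For the principal symbol, I would apply stationary phase in the new middle variables, at leading order in $\xoco$, to the restriction to $\ffococ$ ($\xoco=0$). The leading amplitude on the critical set is then $a_2a_1$ times the half-density/Maslov factor produced by the Hessian. This factor is, by construction, Hörmander's bilinear product \eqref{eq: symbol product, bundle maps}, and the homogeneity factor $S^{[m_2]}\otimes S^{[m_1]}\to S^{[m_1+m_2]}$ follows from the powers of $\xoco$. For ellipticity when $C_1,C_2$ are graphs of symplectomorphisms, the composition is again a graph. Near $q'$ the relevant fiber of the critical set is a single point, and Hörmander's product of two nonvanishing half-densities with a nondegenerate Hessian factor is nonvanishing. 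So $a_2\times a_1\neq0$ near $q'$, and $A_2A_1$ is elliptic there by the short exact sequence \eqref{eq:1c-1c-FIO-short-exact-sequence}.

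The main obstacle I expect is the two-scale phase. After the substitution $x_{\oc,m}=\xoco/\sigma'$, the $\xoco^{-2}$ terms $t_1(1-\sigma_1^2)+t_2(1-\sigma_2^2)\sigma_1^2$ mix with $\sigma'$. One must show that the resulting phase, after possibly eliminating some variables by stationary phase in the $\xoco^{-2}$ layer, is again of the normal form \eqref{eq:1c-1c-phase-normal-form}, with $d\xioco\neq0$ on the composed Legendrian, and clean rather than merely degenerate. I would first try to handle this by treating the $\xoco^{-2}$ layer as a phase on its own, solving for $\sigma'$ and one $t$-variable via the nondegenerate $\xoco^{-2}$ Hessian. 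If that fails, I would fall back on clean phase functions, which by \cite[Remark~7.2, Proposition~7.3]{HJ2026-scattering-map} do not enlarge the class.
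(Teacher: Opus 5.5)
Your proposal is correct and follows the same route as the result being quoted. The paper gives no separate proof: it imports \cite[Proposition~7.5]{HJ2026-scattering-map}, whose symbol product is H\"ormander's bilinear map from the transversal composition argument \cite[Theorem~4.2.2]{FIO1} adapted to the two-scale phases, and the added ellipticity clause follows from $a_2\times a_1$ exactly as you argue, since the fibre of the composition is a point when $C_1,C_2$ are graphs. Your first-choice treatment of the $\xoco^{-2}$ layer already works, so no fallback is needed. For $A_2A_1$ the ratio on the $A_2$ factor is $\sigma'$ and on the $A_1$ factor is $\sigma/\sigma'$ (you have them swapped). With $t_{1,2}=\bar t\pm\tfrac{s}{2}$ the layer is $t_2(1-\sigma'^2)+t_1(\sigma'^2-\sigma^2)=\bar t(1-\sigma^2)+\tfrac{s}{2}(2\sigma'^2-1-\sigma^2)$, whose Hessian in $(s,\sigma')$ has determinant $-4\sigma'^2\neq0$. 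Its critical value $\bar t(1-\sigma^2)$ restores the normal form \eqref{eq:1c-1c-phase-normal-form}; the $\xoco^{-1}$ terms only shift the critical point by $O(\xoco)$ and the phase by a smooth $O(1)$ term absorbed into the amplitude. The resulting powers of $\xoco$ and $2\pi$ then match Definition~\ref{def:1c-1c-FIO} with order exactly $m_1+m_2$.
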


Now we define the classical scattering map $\Cl$, whose graph will have a natural correspondence to bicharacteristic lines and will be used to define our 1c-1c Lagrangian submanifold.
Given $q \in \overline{{}^{\oc}T^* \SR_-}$, under the identification with $W_-$ in Proposition~\ref{prop:Wpm-1c-identification}, there is a unique bicharacteristic that tends to it in the backward direction.
This bicharacteristic will tend to a point $q' \in \overline{{}^{\oc}T^*\SR_+}$ in the forward direction, again after being identified with $W_+$ as above.
Then we define
\begin{equation} \label{eq:classical-sc-map-def}
    \Cl(q) = q'.
\end{equation}
This is a smooth map since the flow of $H_p^{2,0}$ reaches $W_\pm$ in finite time transversally.

The 1c-1c Lagrangian submanifold that we are going to use for our scattering map is the following.
Let $\tilde{\beta}_b$ be the projection map $\ococb \to \overline{{}^{\oc}{T^*X}} \times \overline{{}^{\oc}{T^*X}}$, and use $\mathrm{Gr}(\Cl)'$ to denote the twisted (i.e., with the sign of the frequency variable of the second component flipped) graph of $\Cl$ in $\overline{{}^{\oc}{T^*X}} \times \overline{{}^{\oc}{T^*X}}$, then the admissible 1c-1c Lagrangian submanifold that we will use is $\tilde{\beta}_b^*(\mathrm{Gr}(\Cl)')$, which is defined to be the closure of the preimage of the interior part of $\mathrm{Gr}(\Cl)'$.
Then the characterization of the scattering map using our calculus of 1c-1c Fourier integral operators is the following.
\begin{thm}{\cite[Theorem~1.1]{HJ2026-scattering-map}}
\label{thm: sc-map-HJ}
The scattering map $S$ is an elliptic 1-cusp Fourier integral operator of order zero, with canonical relation the graph of the classical scattering map. 
\begin{align}\label{eq:S 1c-1c FIO}
S \in I^0_{\oc-\oc}(X_b^2,\tilde{\beta}_b^*(\mathrm{Gr}(\Cl)')), \quad X = \overline{\R^n}. 
\end{align}
The scattering map $S$ acts as the identity microlocally on functions (asymptotic data) supported in a compact subset of $\R^n$, or are supported microlocally near frequency-infinity in the 1-cusp sense. 
\end{thm}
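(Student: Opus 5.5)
The plan is to build $S$ from the Poisson operators through the identity \eqref{eq:S formula}, $S = i(2\pi)^n \Poip^* [P,Q_+]\Poim$, and to read off its structure from the 1c-ps calculus of Section~\ref{sec:1c-ps-geometry-distribution}.

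\textbf{Step 1: split off the free part.} I first split the asymptotic data with $Q_{\oc}$ and $\Id - Q_{\oc}$, chosen as in Proposition~\ref{prop:microlocalized Poisson are 1c-ps FIOs}.
\begin{itemize}
\item For $\Id-Q_{\oc}$, the relevant bicharacteristics never meet $\WF'_{\ps}(P-P_0)$. There $P$ agrees microlocally with $P_0$, so propagation of singularities and the radial estimates identify $S(\Id-Q_{\oc})$, modulo residual operators, with $S_0(\Id-Q_{\oc})$.
\item The free scattering map is $S_0=\Id$, because the free evolution preserves the profile $f$ in \eqref{eq:u expansion}.
\item This covers data compactly supported in $\R^n$ and data near 1-cusp fibre infinity: such bicharacteristics stay at spacetime infinity, or are at fibre infinity and miss the compact set $K$.
\item This gives the ``acts as the identity'' statement.
\end{itemize}

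\textbf{Step 2: the microlocalized term.} I insert $Q_{\ps}$ on both sides of $[P,Q_+]$, which is allowed since $\WF'_{\ps}([P,Q_+])$ lies in a compact region away from $\mathcal R_\pm$. This writes $SQ_{\oc}$, modulo smoothing errors, as
\[
i(2\pi)^n\,(Q_{\ps}\Poip Q'_{\oc})^*\,[P,Q_+]\,(Q_{\ps}\Poim Q_{\oc}),
\]
where both outer factors lie in $I^{-3/4}_{\oc-\ps}$ by Proposition~\ref{prop:microlocalized Poisson are 1c-ps FIOs}.

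\textbf{Step 3: geometry of the composition.} By Proposition~\ref{prop: PsiDO- 1c-ps FIO composition}, $[P,Q_+]$ composed with the right factor is again a 1c-ps FIO on $\Lambda_-$. Its symbol is the principal symbol of $[P,Q_+]$, essentially $-iH_p q_+$, times that of $\Poim$. It remains to compose $A_+^*$ with $A_-$ over $\psphase$.
\begin{itemize}
\item A point of $\Lambda_+^{t}\circ\Lambda_-$ pairs $q_-\in W_-$ with $q_+\in W_+$ such that $\gamma_{q_-}(s)=\mu_{q_+}(s')$ for some flow times.
\item Since each bicharacteristic is determined by either endpoint, this forces $q_+=\Cl(q_-)$.
\item So the composed relation is $\mathrm{Gr}(\Cl)'$, lifted to $\ococb$, and the 1c-1c normal form \eqref{eq:1c-1c-phase-normal-form} arises as the difference of two normal forms \eqref{eq:1c-ps-phase-normal-form}.
\end{itemize}

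\textbf{Step 4: clean composition and orders.} The intersection is clean rather than transversal: the fibre over each point of the graph is the whole bicharacteristic segment, so the excess is $e=1$. I would verify cleanness using the transversality of $H_p^{2,0}$ to $W_\pm$ and the admissibility of $\Lambda_\pm$. Stationary phase in the remaining variables then gives the principal symbol as an integral along the bicharacteristic of $\sigma(\Poip)\,\overline{\phantom{a}}\cdot\sigma(\Poim)\cdot(-iH_p^{2,0}q_+)$. The orders $-3/4-3/4+1$ plus the excess and density bookkeeping should give $0$.

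\textbf{Step 5: ellipticity (main obstacle).} The main obstacle is this symbol computation, in particular ellipticity. The transport equations of Proposition~\ref{prop: vanishing principal symbol product} show that the product $\overline{\sigma(\Poip)}\sigma(\Poim)$ is $H_p^{2,0}$-invariant, once the half-densities are correctly paired. The integral then reduces to a fixed nonvanishing factor times $\int H_p^{2,0}q_+\,ds = q_+|_{W_+}-q_+|_{W_-} = 1$. I would fix the normalization by comparing with the free case, where $S_0=\Id$. This gives ellipticity on all of $\mathrm{Gr}(\Cl)'$.
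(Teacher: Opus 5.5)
Your outline follows the route the paper relies on. The paper imports this theorem from \cite{HJ2026-scattering-map} rather than reproving it, but it recalls the same ingredients: the microlocalized identity \eqref{eq:SQ1c-1} coming from \eqref{eq:S formula}; Proposition~\ref{prop:microlocalized Poisson are 1c-ps FIOs} for the factors of order $-\tfrac34$; and the clean composition of Proposition~\ref{prop: 1c-ps composition to 1c-1c}, whose extra $+\tfrac12$ is exactly your excess $e=1$, giving order $-\tfrac34+\tfrac14+\tfrac12=0$. The identity statement comes, as in your Step~1, from the region where $P=P_0$ microlocally.

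One caution on Step~5. In the normalization of Section~\ref{sec:Poisson-determine-potential}, $p_{\sub}\equiv-\tfrac{i}{2}g^{ij}\partial_{z_i}\log|g|\,\zeta_j$, so the transport coefficient $A(s)$ is real but generally nonzero. Between the supports of $F_-$ and $F_+$ you therefore only get $\overline{a_{+,0}}\,a_{-,0}(s)=E(s_0,s)^2\,\overline{a_{+,0}(s_0)}\,a_{-,0}(s_0)$, not exact invariance. Ellipticity is more safely read off from the positivity in Lemma~\ref{lemma:sign-symbols}, together with choosing $\mk{q}_+$ monotone along the $H_p^{2,0}$-flow so that $H_p^{2,0}\mk{q}_+\ge 0$ with integral $1$, as the paper does after \eqref{eq:SQ1c-1}.
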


We conclude this section by summarizing how $\Cl$ and bicharacteristic lines of $P$ relate to geodesics of $g(t)$.
In the correspondence of Proposition~\ref{prop:Wpm-1c-identification}, the position variable in $\overline{{}^{\oc}T^*\R^n}$ corresponds to the frequency variable in $\overline{{}^{\ps}T^*\R^{n+1}}$. 
As discussed after \eqref{eq:CharP-def}, the only bicharacteristics meeting the metric and potential perturbations are those ones with infinite frequency.
Under the identification in Proposition~\ref{prop:Wpm-1c-identification}, these bicharacteristics in fiber infinity of $\overline{{}^{\ps}T^*\R^{n+1}}$ are identified with points in 
$\overline{{}^{\oc}T^*_{\partial \overline{\R^n}}\R^n}$.
Then in terms of $\mathrm{Gr}(\Cl)$ above, they will be canonically identified with points in
\begin{equation}
  \tilde{\beta}_b^*(\mathrm{Gr}(\Cl)') \cap  \ffococ.
\end{equation}
The part of $\Char(P)$ at fiber infinity over $B_R(0)$, after projecting out the frequency dual to $t$ and identifying the fiber infinity as a sphere, can be identified with
\begin{equation} \label{eq:parametrized-copehre-bundle-def}
    \mk{B}_g = [-T,T] \times S^*_gB_R(0),
\end{equation}
which is a family of the sphere bundle $S^*_g\R^n$ parametrized by time $t \in [-T,T]$.
More concretely, we introduce:
\begin{equation}  \label{eq:iota-def}
\iota: \;  \quad (t,z,v) \in \mk{B}_g   \to  (t,z,0,-1,v) \in \Char(P),
\end{equation}
where the coordinates are as in \eqref{eq:ps-coordinates},
with $\hat{\zeta}$-part parametrized by $v$, the fiber part of $S^*_gB_R(0)$.

When we send a point from $\overline{{}^{\oc}T^*_{\partial \overline{\R^n}}\R^n}$ to a bicharacteristic line, $t$ is determined by $t=-\frac{1}{2}\xi_{\oc}$ by \cite[Eq.(3.27)]{HJ2026-scattering-map} and fixed over the entire bicharacteristic line.
Using $\iota$ in \eqref{eq:iota-def}, each bicharacteristic line, after forgetting the (rescaled) frequency dual to $t$ and the $\rho_{\ps}$ component, is just a geodesic (lifted to the cosphere bundle) of $g(t)$. 
In addition, those geodesics entering $\mk{B}_g$, or the image of $\mk{B}_g$ under $\iota$ above, correspond to points in a region of $\overline{{}^{\oc}T^*_{\partial \overline{\R^n}}\R^n}$ on which 1-cusp frequencies are bounded.
So the discussion above also gives the correspondence between these geodesics and points in $\tilde{\beta}_b^*(\mathrm{Gr}(\Cl)') \cap  \ffococ$, which plays the role of `end points' of those geodesics and we summarize it below.

\begin{prop}  \label{prop:classical-SC-geodesic}
Let $\xi_{\oc}$ be the frequency as in \eqref{eq: 1c- canonical form} lifted from the left factor to $\ococb$, then each point in $\tilde{\beta}_b^*(\mathrm{Gr}(\Cl)') \cap  \ffococ$ corresponds to a geodesic of $g(t)$ with $t=-\frac{1}{2}\xi_{\oc}$ or a bicharacteristic line of $P$, so that its left and right projections to $\overline{{}^{\oc}T^*_{\partial \overline{\R^n}}\R^n}$ correspond to the initial and ending point of the geodesic as above.
In addition, those geodesics entering $\mk{B}_g$ correspond to points in $\overline{{}^{\oc}T^*_{\partial \overline{\R^n}}\R^n}$ with 1-cusp frequency in a bounded region.
\end{prop}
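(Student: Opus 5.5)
The plan is to assemble the statement from the identifications already recorded, checking three things: each point of $\tilde{\beta}_b^*(\mathrm{Gr}(\Cl)') \cap \ffococ$ determines a bicharacteristic; the bicharacteristic projects to a geodesic in the slice $t=-\frac12\xi_{\oc}$; and the bounded-frequency claim holds.

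\emph{Step 1: points to bicharacteristics.} A point of $\tilde{\beta}_b^*(\mathrm{Gr}(\Cl)')\cap\ffococ$ projects under $\tilde\beta_b$ to a pair $(q,-q')$ with $q'=\Cl(q)$ and $q,q'\in\overline{{}^{\oc}T^*_{\partial\overline{\R^n}}\R^n}$. This holds because $\ffococ$ lies over $\mathrm{bf}$, where both $x_{\oc,1}$ and $x_{\oc,2}$ vanish. By Proposition~\ref{prop:Wpm-1c-identification}, $q$ is identified with a point $q_-\in W_-$. By non-trapping, the $H_p^{2,0}$-flow from $q_-$ reaches $W_+$ at time $T(q_-)$, and by the definition \eqref{eq:classical-sc-map-def} of $\Cl$ the endpoint is $q'$. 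This gives the bicharacteristic line together with its initial and final points; reversing the argument gives the inverse correspondence.

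\emph{Step 2: the projection is a geodesic.} First, $q$ lies over base infinity of $\overline{\R^n}$, and under the duality of Proposition~\ref{prop:Wpm-1c-identification} 1c position corresponds to ps frequency. Hence the bicharacteristic lies in ps fiber infinity, $\rho_{\ps}=0$. I will use the coordinates \eqref{eq:ps-coordinates} and compute $H_p^{2,0}=\rho_{\ps}\rho_{\base}^{-1}H_p$ for $p=\tau+g^{jk}(z,t)\zeta_j\zeta_k+V$ at $\rho_{\ps}=0$. The potential is lower order and drops out. The $\partial_t$-component is $\rho_{\ps}\rho_{\base}^{-1}\partial_\tau p=\rho_{\ps}\rho_{\base}^{-1}$, which vanishes at $\rho_{\ps}=0$, so $t$ is constant along the curve. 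On $\Char(P)$ we have $\hat\tau=-1$, so $\iota$ is the right parametrization. Under $\iota$ in \eqref{eq:iota-def}, the remaining $(z,\hat\zeta)$-components equal a positive multiple of the geodesic spray of $g(t)$ on $S^*_g\R^n$; this is the standard reduction of $H_{|\zeta|^2_g}$ after homogeneous rescaling. Reparametrizing, the curve is a lifted unit-speed geodesic of $g(t)$. Finally, \cite[Eq.(3.27)]{HJ2026-scattering-map} fixes the constant value as $t=-\frac12\xi_{\oc}$, evaluated at the left endpoint, which is the $\xi_{\oc}$ lifted from the left factor.

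\emph{Step 3: bounded frequency.} The set $\iota(\mk{B}_g)$ is compact. By non-trapping and the transversality of $H_p^{2,0}$ at $W_\pm$, the maps $\msf{I}_\pm$ of \eqref{eq:msf-I-def} are continuous on it. Hence the endpoints of all bicharacteristics meeting $\iota(\mk{B}_g)$ form a compact subset of $W_\pm\cong\overline{{}^{\oc}T^*\R^n}$. It remains to show this compact set misses 1c fiber infinity. The time coordinate $t=-\frac12\xi_{\oc}$ lies in $[-T,T]$, so $\xi_{\oc}$ is bounded. For $\eta_{\oc}$, I expect it to correspond to the impact parameter (the transverse offset) of the incoming straight line outside $B_R(0)$, which is bounded by $R$ for lines that meet the ball. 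I would verify this in the free region using the explicit form \eqref{eq:rad.corner.pm} of $\mathcal R_-$ together with the explicit identification in \cite[Section~3.3]{HJ2026final}.

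This last identification is the main obstacle, since it requires the explicit coordinates of the blow-up of $\mathcal R_-$. If that computation becomes messy, I would instead argue abstractly: points at 1c fiber infinity correspond to bicharacteristics that stay near the equator of $\partial\overline{\R^{n+1}}$ and so never reach the compact set $K$.
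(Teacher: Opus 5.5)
Your proposal is correct and follows the same route as the paper, which derives this statement from the discussion just before it. That discussion uses Proposition~\ref{prop:Wpm-1c-identification} with the definition of $\Cl$, the relation $t=-\frac{1}{2}\xi_{\oc}$ from \cite[Eq.(3.27)]{HJ2026-scattering-map}, and the reduction, via $\iota$, of the $H_p^{2,0}$-flow at fiber infinity to the geodesic flow of $g(t)$. The paper only asserts the bounded-frequency claim; your explicit check that the $\partial_t$-component of $H_p^{2,0}$ vanishes at $\rho_{\ps}=0$ and your compactness argument fill in details it leaves implicit.

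Your impact-parameter guess is right: for the free incoming line, $\eta_{\oc}$ is, up to sign, the component of the offset orthogonal to the direction, so it is bounded for lines meeting $B_R(0)$, while $|\xi_{\oc}|\le 2T$. In your fallback argument, the precise reason is that 1c fiber infinity of $W_\pm$ lies in the lifted face $\{x_{\ps}=0\}$, to which $H_p^{2,0}$ is tangent, not that these curves stay near the equator.
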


Next we consider the composition of two 1c-ps Fourier integral operators,
which is motivated by \eqref{eq:S formula} and will be used to give the expression of the principal symbol of the scattering map.
\begin{prop}{\cite[Theorem~8.3]{HJ2026-scattering-map}} \label{prop: 1c-ps composition to 1c-1c}
Suppose $A_- \in I_{\oc-\ps}^{m_-}(\R^{n+1} \times \R^n,\Lambda_-)$, $A_+ \in I_{\oc-\ps}^{m_+}(\R^{n+1} \times \R^n,\Lambda_+)$, then we have
\begin{align}
A_+^*A_- \in I_{\oc-\oc}^{m_++m_-+\frac{1}{2}}(X_b^2,\beta_b^*(\mathrm{Gr}(\Cl)')).
\end{align}
The principal symbol of $A_+^*A_-$ at $(q-, q_+)$, where $q_- \in W_-$ and $q_+ = \Cl(q_-)$, is 
\begin{align} \label{eq: principal symbol, 1c-ps composition to 1c-1c}
\int_{-\infty}^\infty \overline{\sigma_{1c-ps}^{m_+}(A_+)} \sigma_{1c-ps}^{m_-}(A_-) (q_-, \gamma_{q_-}(s)) ds ,
\end{align}
where the integral is a global section of 
\begin{equation}
\Omega^{1/2}(\beta_b^*(\mathrm{Gr}(\Cl))) \otimes S_{\oc-\oc}^{[m_-+m_++\frac{1}{2}]}(\beta_b^*(\mathrm{Gr}(\Cl))).    
\end{equation}
\end{prop}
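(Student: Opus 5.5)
The plan is to compute the Schwartz kernel of $A_+^*A_-$ directly from the oscillatory integral representations in Definition~\ref{defn: 1c-ps FIO}. I would then recognize the result as an oscillatory integral of the form \eqref{eq: 1c-1c FIO, local form}, with a phase in the normal form \eqref{eq:1c-1c-phase-normal-form}.

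\textbf{Step 1: local form of the composed kernel.} By a partition of unity and the invariance under change of parametrization (\cite[Proposition~5.4]{HJ2026-scattering-map}), I may assume both $A_\pm$ are single terms whose phases are in the normal form \eqref{eq:1c-ps-phase-normal-form}. These carry the 1-cusp variables $(x_{\oc,2},y_{\oc,2})$ for $A_-$ and $(x_{\oc,1},y_{\oc,1})$ for $A_+$.

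The kernel of $A_+^*A_-$ is then the integral over $(t,z)\in\R^{n+1}$ of $\overline{A_+}\,A_-$. Its total phase is
\begin{equation*}
\frac{t}{x_{\oc,1}^2}-\frac{t}{x_{\oc,2}^2}+\frac{(\dots)}{x_{\oc,1}}-\frac{(\dots)}{x_{\oc,2}}.
\end{equation*}
Writing $x_{\oc,2}=x_{\oc,1}/\sigma$, this becomes $\frac{t(1-\sigma^2)}{x_{\oc,1}^2}$ plus a term $\varphi_1/x_{\oc,1}$. Here $\varphi_1$ depends on $\msf{K}=(x_{\oc,1},\sigma,y_{\oc,1},y_{\oc,2})$, on $t$, and on the remaining integration variables. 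These remaining variables are $z$ together with the old phase variables $\tilde\zeta''$ from both factors.

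So $t$ remains a $\theta_0$-type phase variable, and $z,\tilde\zeta''$ become $\theta_1$-type variables. This is exactly the normal form \eqref{eq:1c-1c-phase-normal-form}.

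\textbf{Step 2: clean intersection and the excess.} Next I would verify that this phase parametrizes $\tilde\beta_b^*(\mathrm{Gr}(\Cl)')$ cleanly. Geometrically, the critical set consists of triples $(q_+,\tilde q,q_-)$ with $\tilde q$ lying on both the forward sojourn relation of $q_-$ and the backward sojourn relation of $q_+$. By the definitions \eqref{eq:FSR def} and \eqref{eq:BSR def} and the source–sink structure, this forces $q_+=\Cl(q_-)$ and $\tilde q=\gamma_{q_-}(s)$.

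So the fibre product $\Lambda_+\times_{\Char(P)}\Lambda_-$ projects onto $\mathrm{Gr}(\Cl)'$ with one-dimensional fibres, parametrized by the flow time $s$. That is, the phase is clean with excess $e=1$. Transversality of the flow to $W_\pm$ and non-trapping make the fibres compact intervals, and make $\Cl$ a smooth symplectomorphism. This should also give admissibility: transversality to $\ffococ$, and $d\xi_{\oc,1}\neq 0$ coming from $dt\neq0$ on $\Legps$ via $t=-\tfrac12\xi_{\oc}$. By \cite[Remark~7.2, Proposition~7.3]{HJ2026-scattering-map}, clean phases are allowed, so the composite lies in some $I^{m}_{\oc-\oc}$.

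\textbf{Step 3: the order.} I would find $m$ by comparing the powers of $x_{\oc,1}$. The amplitude power is $x_{\oc}^{-(m_\pm+\dots)-1/4}$ from each factor. The normalization includes $(2\pi)$ factors and the half-density factors $|dt\,dz|$ and $|dx\,dy/x^{n+2}|^{1/2}$. The counting of phase variables must allow for the excess $e=1$, which appears as $(k_1-e)$ in \eqref{eq: 1c-1c FIO, local form}. The bookkeeping should yield $m=m_++m_-+\tfrac12$, with the $\tfrac12$ coming from the excess.

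\textbf{Step 4: the principal symbol.} I would apply stationary phase in the nondegenerate directions, leaving the excess direction as a genuine integral. By the clean composition formula (cf. \cite[Theorem~4.2.2]{FIO1}, adapted to the $x_{\oc}$-homogeneous bundles $S^{[m]}$), the leading term is the fibre integral over the excess fibre of the product of the principal symbols. Parametrizing the fibre by $s$ gives \eqref{eq: principal symbol, 1c-ps composition to 1c-1c}.

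\textbf{Main obstacle.} I expect the hardest step to be checking that the density and Maslov factors combine correctly near $\ffocps\times\ffococ$, where the blown-up structure lives. Concretely, the half-density on $\Legps$ times the conjugated half-density should factor as a half-density on $\mathrm{Gr}(\Cl)'\cap\ffococ$ times $|ds|$. The powers of $x_{\oc}$ also have to match $S^{[m_++m_-+1/2]}_{\oc-\oc}$.

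I would do this first in the free case $P_0$, where everything is explicit, and then argue by invariance of the symbol calculus.
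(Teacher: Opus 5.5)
Your proposal follows essentially the same route as the cited proof of \cite[Theorem~8.3]{HJ2026-scattering-map}, which this paper only invokes while noting that the composite phase is the difference of two normal-form phases \eqref{eq:1c-ps-phase-normal-form} and that the symbol comes from stationary phase. In that argument $t$ becomes the $\theta_0$-variable of \eqref{eq:1c-1c-phase-normal-form}, the critical set is clean with excess one along the bicharacteristic, your power count with the $(k_1-e)$ convention of \eqref{eq: 1c-1c FIO, local form} correctly yields the extra $\tfrac12$, and the leftover excess integral gives \eqref{eq: principal symbol, 1c-ps composition to 1c-1c}. The one loose remark is your plan to settle the half-density bookkeeping in the free case and transfer it ``by invariance''; no such invariance principle applies, but none is needed, since that identification is pointwise linear algebra in the clean composition formula and should simply be done directly along the fibre.
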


\section{Determining the potential by the Poisson operator}
\label{sec:Poisson-determine-potential}

In this section, we prove Theorem~\ref{thm:main-Poisson-intro}, which shows how the potential can be determined by the sub-leading part of the Poisson operator. 

\subsection{The microlocalized Poisson operators and their principal symbols}
\label{subsec:Poisson-principal}

We derive detailed information about the principal symbol of the forward and backward Poisson operators in this subsection.

Recall that in the parametrix construction in the proof of \cite[Proposition~5.16]{HJ2026-scattering-map}, we showed that the microlocalized forward and backward Poisson operators $Q_{\pm,\ps}\Poipm Q_{\pm,\oc}$ equal to the parametrix $K_\pm$ constructed in \cite[Proposition~5.15]{HJ2026-scattering-map}. Here $K_\pm$ is an asymptotic sum
\begin{align} \label{eq:Kpm-sum}
K_\pm = \sum_{j=0}^\infty K_{\pm,j}, 
\quad K_{\pm,j}  \in I^{-j -\frac{3}{4} }_{\oc-\ps}(\R^{n+1} \times \R^n, \Lambda_\pm),
\end{align}
such that
\begin{align} \label{eq: parametrix property, up to N}
\Big( P \sum_{j=0}^N K_{\pm,j} - Q_1\mathcal{P}_0 Q_{\oc} \Big) \in I^{-\frac{3}{4}-N}_{\oc-\ps}(\R^{n+1} \times \R^n, \Lambda_\pm).
\end{align} 
We denote the principal symbol of $K_{\pm,j}$ by $a_{\pm,j}$ and we will investigate $a_{\pm,0}$ in this subsection and $a_{\pm,1}$ in the next subsection.

To obtain the detailed information beyond existence, we need more refined information about the transport equation from which we constructed $a_{\pm,j}$.
First, we show that the subprincipal symbol 
\begin{equation}
    p_{\sub}  = r - (2i)^{-1} \sum_k \frac{\partial^2p_{\full}}{\partial z_k \partial \zeta_k} \mod \; S_{\ps}^{0,0}
\end{equation}
used in the parametrix construction can be chosen to be purely imaginary, where $r,p_{\full}$ are recalled below. To this end, we consider the expression of the left full symbol of $P$. 
For the rest of this section, repeated indices are summed.
Unravelling $\Delta_g$ as
\begin{equation}
\Delta_g = \sum_{i,j} |g|^{-1/2}D_{z_i}
\big(|g|^{1/2}g^{ij}D_{z_j}\big) = g^{ij}D_iD_j+|g|^{-1/2}D_i(g^{ij}|g|^{1/2})D_j,
\end{equation}
we know the left full symbol of $P = D_t+\Delta_g+V$ is
\begin{align} \label{eq:full-symbol}
p_{\mathrm{full}}(t,z,\tau,\zeta)
&= \tau+g^{ij}(t,z)\zeta_i\zeta_j
   - i\,b^j(t,z)\zeta_j
   + V(t,z),
\end{align}
where
\begin{align} \label{eq:def-bj}
b^j = |g|^{-1/2}\partial_{z_i}    \big(|g|^{1/2}g^{ij}\big).
\end{align}
One can derive the desired property without using the concrete expression of $b^j$, but we use this explicit formula below since the resulting $p_{\sub}$ is quite geometric. We have 
\begin{equation}
\frac{\partial^2 p_{\full}}{\partial z_j \partial \zeta_j } =
2  \partial_{z_j}g^{ij}\zeta_i-i \partial_{z_j}b^j .
\end{equation}
Recalling the definition of $r,p_{\sub}$ in \cite[Section~5.3]{HJ2026-scattering-map}, the homogeneous principal symbol is 
\begin{align}
    p_{\mathrm{hom}} = \tau + g^{ij}\zeta_i\zeta_j.
\end{align}
Then the remainder part of the symbol is 
\begin{align}
    r=p_{\mathrm{full}} - p_{\hom} = - i\,b^j(t,z)\zeta_j
   + V(t,z). 
\end{align}
By the computation above, we have
\begin{align}
\begin{split}
    p_{\sub} & = r - (2i)^{-1} \sum_k \frac{\partial^2p_{\full}}{\partial z_k \partial \zeta_k}
\\ &= -i b^j\zeta_j + V
   - (2i)^{-1} \left( 2(\partial_{z_k}g^{kj})\zeta_j  - i\partial_{z_k}b^k \right) 
   \\ &= -i b^j  \zeta_j
   + i(\partial_{z_k}g^{kj})\zeta_j
   + V + \frac{1}{2}\partial_{z_k}b^k 
   \\ & = -\frac{i}{2}\zeta_j g^{jk}\partial_{z_k}\log|g|
   + V+\frac{1}{2}\partial_{z_k}b^k.
\end{split}
\end{align}
Since $p_{\sub}$ is defined invariantly only modulo $S^{0,0}_{\ps}$, we have
\begin{align}
p_{\mathrm{sub}}
&\equiv -\frac{i}{2}g^{ij}\partial_{z_i}\log |g|\,\zeta_j \quad \mod S^{0,0}_{\mathrm{ps}}.
\end{align}
So we may use $-\frac{i}{2}g^{ij}\partial_{z_i}\log |g|\,\zeta_j$, which is purely imaginary, as the representative of $p_{\sub}$.


We now derive the expression of the principal symbol of the forward and backward Poisson operators $\Poipm$. 
Let $q_\pm \in \ocphase$ be points corresponding to the starting and ending points of $\gamma$ under the identification in Proposition~\ref{prop:classical-SC-geodesic}.  So $q_+ = \Cl(q_-)$.
We write $(q_\pm,\gamma(s))$ as variables on $\Lambda_\pm$, which should be interpreted as lifted to the 1c-ps phase space, rather than in this product form. We consider $q_-$ such that the bicharacteristic line starting from it meets $B_R(0)$. We can choose $Q_{\oc}$ so that $q_{\oc} \gtrsim 1$ on this region. 

Recall from \cite[Eq.(5.41)]{HJ2026-scattering-map}, the principal symbol of the parametrix solves the following transport equation arising from Proposition~\ref{prop: vanishing principal symbol product} along a bicharacteristic line $\gamma(\cdot)$:
\begin{align} \label{eq: transport ODE, a-0-2}
(-i\mathscr{L}_{ H_p^{2,0} } +i A)a_{\pm,0} = F_\pm,
\end{align}
where 
\begin{align}
\begin{split}
F_\pm(s) = \sigma^{1/4}(Q_{\pm,1}\mathcal{P}_0 Q_{\oc})(q_\pm,\gamma(s)), \quad
A(s) = (\frac n2+1) (x_{1c}^{-1}H_p^{2,0}x_{1c})(q_\pm,\gamma(s)) -i p_{\mathrm{sub}},
\end{split}
\end{align}
with the $q_\pm$-dependence abbreviated.

Here $A(s)$ is real, and $a_{-,0}$ satisfies the `initial condition' $a_{-,0}(q_-,\gamma(s)) = 0$ for sufficiently negative $s$ and the `final condition' $a_{+,0}(q_+,\gamma(s))=0$ for sufficiently large $s$. 
In addition, let $\tilde{Q}_{\pm,\ps} \in \Psi_{\ps}^{0,0}(\R^{n+1})$ be such that $\WF'_{\ps}(\tilde{Q}_{-,\ps})$ is only backward in terms of the $H_p^{2,0}$-flow relative to $B_R(0)$ and $\WF'_{\ps}(\tilde{Q}_{+,\ps})$ is only forward relative to $B_R(0)$.
We set
\begin{equation} \label{eq:def-Q1-minus}
   Q_{\pm,1} = \chi[P,\tilde{Q}_{\pm,\ps}] \in \Psi_{\ps}^{1,-\infty},
\end{equation}
where $\chi$ is a smooth cut-off supported in a large ball in $\R^{n+1}$. See also \cite[Eq.(5.36)]{HJ2026-scattering-map}. 
In particular, $Q_{-,1}$ has purely imaginary principal symbol.

Since we are in a region with $q_{\oc} \gtrsim 1$, the supports of $F_\pm$ are non-empty. We use $s_\pm(q_\pm)$ to denote the time when $\gamma(\cdot)$, after fixing a unit-speed parametrization, enters and leaves this support:
\begin{align} \label{eq:def-s-s+}
   s_-(q_-) = \inf \{ s: \gamma(s) \in \supp \, F_-(s) \}, \quad
   s_+(q_+) = \sup \{ s: \gamma(s) \in \supp\, F_+(s) \}.
\end{align}
Then by the choice of $\tilde{Q}_{\pm,\ps}$ above, we know that $\gamma(s_-)$ is a point before $\gamma$ enters $B_R(0)$, which contains the support of all perturbations, and $\gamma(s_+)$ is a point after $\gamma$ leaves $B_R(0)$.




To make the expressions below more concise, we introduce
\begin{equation} \label{eq:def-E}
    E(s',s) = \exp(\int_{s'}^{s} A(s'')d s'').
\end{equation}
Solving the ODE \eqref{eq: transport ODE, a-0-2}, we have the following expression for $a_{-,0}$. 
\begin{lmm} \label{lemma:sign-symbols}
    The principal symbol of $\Poim$ is given by
    \begin{equation} \label{eq:a-0,explicit-1}
a_{-,0}(q_-,\gamma(s)) = i \int_{-\infty}^s E(s',s) F_-(s')ds'.
\end{equation}
Similarly, we have 
\begin{equation} \label{eq:a+0-explicit-1}
a_{+,0}(q_+,\gamma(s)) = -i \int_{s}^\infty
E(s',s) F_+(s')ds'.
\end{equation}
In particular, $a_{\pm,0}$ (hence $K_{\pm,0}$) is independent of the potential.

In addition, we can choose $\tilde{Q}_{\pm,\ps},Q_{\pm,1}$ mentioned above so that $F_\pm$ is purely imaginary and \footnote{Rigorously, $F_\pm$ is a section of the bundle in \eqref{eq: invariant, symbol map, 1c-ps}. Here and below, statements about being real, imaginary or having a sign refer to the scalar factor obtained after fixing a boundary defining function, a trivialization of the density bundle and removing the phase correction introduced by the Maslov bundle.  }
\begin{align} \label{eq:Fpm-definite-sign}
    \pm i F_{\pm} \geq 0,
\end{align}
and the strict inequality holds when $s \in (s_-(q_-),s_+(q_+))$.

For $q_\pm$ as above and $s \in (s_-(q_-),s_+(q_+))$, we have $\quad a_{\pm,0} > 0$, and for $s$ with the projection of $\gamma(s)$ in $\R^{n+1}$ lying in $[-T,T] \times B_R(0)$, we have
\begin{equation} \label{eq:apm0-definite-sign}
    a_{\pm,0}(q_\pm,\gamma(s)) \gtrsim 1.
\end{equation}

\end{lmm}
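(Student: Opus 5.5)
The plan is to treat \eqref{eq: transport ODE, a-0-2} as a scalar linear ODE along each bicharacteristic $\gamma$, after trivializing the bundle in \eqref{eq: invariant, symbol map, 1c-ps}. Fix a unit-speed parametrization, so that $\mathscr{L}_{H_p^{2,0}}$ acts as $\frac{d}{ds}$ on the scalar factor. Dividing by $-i$, the equation becomes
\begin{equation*}
\frac{d}{ds}a_{\pm,0} - A(s)\,a_{\pm,0} = iF_\pm(s).
\end{equation*}
Since $E(s',s)$ is the integrating factor, variation of constants gives \eqref{eq:a-0,explicit-1} directly. Here we use the initial condition $a_{-,0}=0$ for $s$ very negative, and the integral is effectively over $[s_-,s]$ because $F_-$ is compactly supported along $\gamma$. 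For $a_{+,0}$ we impose the final condition at $s=+\infty$ and integrate backwards, which produces \eqref{eq:a+0-explicit-1} with the sign $-i$.

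Independence from the potential follows by inspecting the ingredients. First, $A$ involves only $x_{\oc}^{-1}H_p^{2,0}x_{\oc}$, which depends only on the principal symbol, and $p_{\sub}$, which we showed can be taken to be $-\frac{i}{2}g^{ij}\partial_{z_i}\log|g|\,\zeta_j$, free of $V$. Second, $F_\pm$ comes from $Q_{\pm,1}\mathcal{P}_0Q_{\oc}$. The principal symbol of $Q_{\pm,1}=\chi[P,\tilde{Q}_{\pm,\ps}]$ is $-i\chi H_p\tilde{q}_\pm$, and this does not see $V$ because $V$ is lower order. Hence $a_{\pm,0}$, and so $K_{\pm,0}$, are independent of $V$. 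The same expression for $p_{\sub}$ shows that $-ip_{\sub}$ is real, so $A$ is real and $E>0$.

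For the sign statements, choose $\tilde q_{-}$, the symbol of $\tilde Q_{-,\ps}$, nonnegative. It should equal $1$ on the backward region and be monotone nonincreasing along the $H_p^{2,0}$-flow, so that $H_p^{2,0}\tilde q_-\le 0$, with strict inequality where $\tilde q_-$ transitions. Choose $\tilde q_+$ symmetrically, so that $H_p^{2,0}\tilde q_+\ge 0$. Then
\begin{equation*}
\sigma(Q_{\pm,1})=-i\chi\rho^{-1}H_p^{2,0}\tilde q_\pm
\end{equation*}
is purely imaginary with a definite sign. Multiplying by the principal symbol of $\mathcal{P}_0Q_{\oc}$, which is explicit for the free equation and can be taken positive after trivialization with $q_{\oc}\gtrsim 1$ on the relevant region, gives $\pm iF_\pm\ge0$. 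The inequality is strict where the transition happens, which by the definitions \eqref{eq:def-s-s+} is the part of $(s_-,s_+)$ where $F_\pm$ is supported. Substituting into \eqref{eq:a-0,explicit-1} gives
\begin{equation*}
a_{-,0}=\int_{-\infty}^s E\,(iF_-)\,ds'\ge0,
\end{equation*}
and likewise $a_{+,0}=\int_s^\infty E\,(-iF_+)\,ds'\ge0$.

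The step I expect to be the obstacle is the strict positivity on all of $(s_-,s_+)$. My approach is to arrange the cutoffs so that $F_-$ is nonzero on a set of positive measure near $s_-$. The integral up to any $s>s_-$ is then strictly positive, and it stays positive afterwards because the integrand is nonnegative. The symmetric argument handles $a_{+,0}$ near $s_+$. For the uniform bound \eqref{eq:apm0-definite-sign}, the set of $s$ for which $\gamma(s)$ projects into $[-T,T]\times B_R(0)$ lies at $s$-distance bounded away from $s_\pm$; for $a_{-,0}$ it comes after the whole support of $F_-$, by the choice of $\tilde{Q}_{\pm,\ps}$. There $a_{-,0}$ equals $E(s_1,s)a_{-,0}(s_1)$ for a fixed $s_1$ past that support, and $E$ is bounded below by continuity and non-trapping. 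Compactness of the relevant set of $q_\pm$, together with continuous dependence on $q_\pm$, then gives the lower bound $\gtrsim1$.
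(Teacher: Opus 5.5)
Your route is the same as the paper's: variation of constants for the transport equation with integrating factor $E$, $V$-independence from the $V$-free representative of $p_{\sub}$ and the principal symbol of $Q_{\pm,1}$, monotone cutoffs to make $F_\pm$ sign-definite, then integration. Two parts of your argument are more careful than the paper's. For strictness you use that $s_-$ is the infimum of the support of $F_-$. For the uniform bound you write $a_{-,0}(s)=E(s_1,s)\,a_{-,0}(s_1)$ past that support and then use compactness in $q_-$. The paper instead asks for $H_p^{2,0}\tilde q_{-,\ps}\neq 0$ on all of $(s_-(q_-),s_+(q_+))$, which cannot hold literally because $\WF'_{\ps}(\tilde Q_{-,\ps})$ lies backward of $B_R(0)$.

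The genuine problem is a sign error at the positivity step. With your convention $\sigma(Q_{\pm,1})=-i\chi\rho^{-1}H_p^{2,0}\tilde q_\pm$ and $H_p^{2,0}\tilde q_-\le 0\le H_p^{2,0}\tilde q_+$, you correctly obtain $\pm iF_\pm\ge 0$, i.e.\ $iF_-\le 0$ and $iF_+\ge 0$. You then write $a_{-,0}=\int E\,(iF_-)\,ds'\ge 0$ and $a_{+,0}=\int E\,(-iF_+)\,ds'\ge 0$. Since $E>0$, your own signs actually give $a_{-,0}\le 0$ and $a_{+,0}\le 0$.

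No choice of cutoffs repairs this. Because $A$ is real, \eqref{eq:a-0,explicit-1} and \eqref{eq:a+0-explicit-1} force $a_{-,0}$ to carry the sign of $iF_-$ and $a_{+,0}$ the sign of $-iF_+$. So \eqref{eq:Fpm-definite-sign} and the claim $a_{\pm,0}>0$, as displayed, are incompatible. The paper's proof writes $\sigma(Q_{\pm,1})=i\chi H_p\tilde q_{\pm,\ps}$; with a decreasing $\tilde q_{-,\ps}$ this yields $\mp iF_\pm\ge 0$, which is the version compatible with $a_{\pm,0}>0$. Which overall sign is correct depends on using the same convention for $H_p$ in the commutator as in the $-i\mathscr{L}_{H_p^{2,0}}$ of the transport equation.

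What you can prove, and all that is used later, is the following:
\begin{itemize}
\item $iF_-$ and $-iF_+$ are sign-definite, strictly so on the transition regions;
\item $a_{-,0}$ and $a_{+,0}$ inherit these respective signs;
\item $|a_{\pm,0}|\gtrsim 1$ where $\gamma(s)$ projects into $[-T,T]\times B_R(0)$.
\end{itemize}
This suffices for Theorem~\ref{thm:main-Poisson-intro}. In the proof of Theorem~\ref{thm:main-scmap-intro}, the two terms inside the bracket of the final expression for the weight both have sign $-\operatorname{sgn}(a_{-,0})\operatorname{sgn}(a_{+,0})$ under either convention, so they still do not cancel. You should fix one convention consistently and state the conclusion in this corrected form, rather than asserting both signs.
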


\begin{proof}
The concrete expressions for $a_{\pm,0}$ follow from solving the ODE \eqref{eq: transport ODE, a-0-2} directly. 

The fact that $F_-(s')$ can be chosen to be purely imaginary follows from our choice of $Q_1$ above: since the principal symbol of $[P,\tilde{Q}_{\pm,\ps}]$ is purely imaginary and $Q_{\pm,1}$ can be obtained from it by multiplying a smooth positive function.
We denote the principal symbol of $\tilde{Q}_{\pm,\ps}$ by 
$\tilde{q}_{\pm,\ps}$.
We can choose $\tilde{q}_{-,\ps}$ (resp. $\tilde{q}_{+,\ps}$) that is decreasing (resp. increasing) along the $H_p^{2,0}$-flow.
The principal symbol of $Q_{\pm,1}=\chi[P,\tilde{Q}_{\pm,\ps}]$ is $i\chi H_p\tilde{q}_{\pm,\ps}$. 
Using Proposition~\ref{prop: PsiDO- 1c-ps FIO composition} and Proposition~\ref{prop: PsiDO- 1c-ps FIO composition-1c-right}, we obtain \eqref{eq:Fpm-definite-sign} since the principal symbol of $\mathcal{P}_0$ is $1$ and $q_{\oc}$, the principal symbol of $Q_{\oc}$, is strictly positive for our $q_-$.
In addition, we can choose $\tilde{q}_{\pm,\ps}$ so that its derivative (along $H_p^{2,0}$) is non-zero on $(s_-(q_-),s_+(q_+))$, so the inequality \eqref{eq:Fpm-definite-sign} is strict when $s \in (s_-(q_-),s_+(q_+))$, which implies \eqref{eq:apm0-definite-sign} in combination with \eqref{eq:a-0,explicit-1} and \eqref{eq:a+0-explicit-1}, since we have integrated those positive functions over an interval with size $\gtrsim 1$.

\end{proof}

\subsection{The sub-principal part of the Poisson operators}

We derive the detailed information about the sub-leading order part of the forward and backward Poisson operators and show that they determine the potential in this subsection.

Let $P_i$ be as in \eqref{eq:def-Pi}. We will use $\bullet^{(i)}$ to denote $\bullet$, which can be an operator or a symbol, constructed in the previous subsection but with $P$ replaced by $P_i$.


Similar to \eqref{eq: transport ODE, a-0-2},
$a_{\pm,1}^{(i)}$ is constructed in \cite[Proposition~5.15]{HJ2026-scattering-map} via solving
\begin{align} \label{eq: transport ODE, a-1}
(-i\mathscr{L}_{ H_p^{2,0} } 
+i A)a_{\pm,1}^{(i)} = \sigma^{-3/4}(P_iK_{\pm,0}-Q_{\pm,1}\mathcal{P}_0 Q_{\oc}), \quad i = 1,2.
\end{align}
So the transport equation for $a^{(1)}_{\pm,1}-a^{(2)}_{\pm,1}$ is
\begin{align} \label{eq: transport ODE, b-1}
(-i\mathscr{L}_{ H_p^{2,0} } 
+i A)(a_{\pm,1}^{(1)}-a_{\pm,1}^{(2)}) = \sigma^{-3/4}((V_1-V_2)K_{\pm,0})=(V_1-V_2)a_{\pm,0},
\end{align}
with the initial condition $\big( a_{\pm,1}^{(1)}-a_{\pm,1}^{(2)}\big)(q_\pm,\gamma(\pm \infty))=0$. 
Here we view $V_1-V_2$ as a 0-th order symbol, lifted to the phase space.
Setting 
\begin{equation}
   b_{\pm,1}=a_{\pm,1}^{(1)}-a_{\pm,1}^{(2)},
\end{equation}
and solving \eqref{eq: transport ODE, b-1} for $b_{-,1}$, we obtain
\begin{equation}
b_{-,1}(q_-,\gamma(s))= i\int_{-\infty}^{s}
E(s',s) \big(V_1-V_2\big)(\gamma(s')) a_{-,0}(q_-,\gamma(s')) ds' .
\end{equation}

Substituting the formula for $a_{-,0}$, we obtain
\begin{equation} \label{eq:b-1-1}
\begin{aligned}
b_{-,1}(q_-,\gamma(s))
= & - \int_{-\infty}^{s}
E(s',s) \big(V_1-V_2\big)(\gamma(s')) \int_{-\infty}^{s'}
E(\rho,s')F_-(\rho)\,d\rho ds' .
\end{aligned}
\end{equation}
Since $E(s',s)E(\rho,s')=E(\rho,s)$, 
we can simplify \eqref{eq:b-1-1} to be
\begin{equation} \label{eq:b-1-formula}
b_{-,1}(q_-,\gamma(s)) = - \int_{-\infty}^{s}
\int_{-\infty}^{s'} E(\rho,s) \big(V_1-V_2\big)(\gamma(s')) F_-(\rho) d\rho ds' .
\end{equation}

Similarly, we solve \eqref{eq: transport ODE, b-1} for $b_{+,1}$ backwardly starting from $s=+\infty$ with $0$ as the initial value to obtain:
\begin{align}
  \begin{split}
  \label{eq:b+1-formula}
b_{+,1}(q_+,\gamma(s)) =&  -i \int_{s}^{+\infty} E(s',s)(V_1-V_2)(\gamma(s'))a_{+,0}(q_+,\gamma(s'))ds'
\\ = & - \int_{s}^{+\infty} \int_{s'}^\infty E(\rho,s)(V_1-V_2)(\gamma(s'))F_+(\rho)d\rho ds' .      
  \end{split}  
\end{align}


We next give a characterization of the mapping property and the compactness of 1c-ps FIOs.

\begin{prop} \label{prop:boundedness-compactness-1cps}
Let $A \in I_{\oc-\ps}^{-\frac{1}{4}-m}(\R^{n+1}\times\R^n,\Lambda_\pm)$. Then $A$ is bounded from $L^2(\R^n)$ to $H_{\ps}^{m,r}(\R^{n+1})$ for any fixed $r \in \R$.

If the principal symbol of $A \in I_{\oc-\ps}^{-\frac{1}{4}-m}(\R^{n+1}\times\R^n,\Lambda_\pm)$ is not identically zero, then $A$ is not a compact operator from $L^2(\R^n)$ to $H_{\ps}^{m,r}(\R^{n+1})$.
\end{prop}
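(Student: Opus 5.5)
The plan is to reduce both claims to statements about the 1c-1c operator $A^*A$, using Proposition~\ref{prop: 1c-ps composition to 1c-1c} and the $L^2$ theory for 1-cusp operators. Boundedness $L^2(\R^n)\to H^{m,r}_{\ps}(\R^{n+1})$ is equivalent to boundedness of $\Lambda A$ on $L^2$, where $\Lambda\in\Psi_{\ps}^{m,r}(\R^{n+1})$ is elliptic with parabolically homogeneous principal symbol. The latter is in turn equivalent to boundedness of $(\Lambda A)^*(\Lambda A)$ on $L^2(\R^n)$.

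By Proposition~\ref{prop: PsiDO- 1c-ps FIO composition}, $\Lambda A\in I^{-\frac14}_{\oc-\ps}(\R^{n+1}\times\R^n,\Lambda_\pm)$. The spacetime weight $r$ is irrelevant here, because $\Lambda_\pm$ lies over a compact region of spacetime (we stay near a compact subset of $\ffocps$). Applying Proposition~\ref{prop: 1c-ps composition to 1c-1c} with $A_+$ and $A_-$ both replaced by $\Lambda A$, we get
\[
B:=(\Lambda A)^*(\Lambda A)\in I^{0}_{\oc-\oc}.
\]
Both factors are associated to the same $\Lambda_\pm$, so the composed canonical relation is the lift of the diagonal rather than $\mathrm{Gr}(\Cl)'$. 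I would redo the argument of Proposition~\ref{prop: 1c-ps composition to 1c-1c} with $\Cl$ replaced by the identity: the transversality there comes from $H_p^{2,0}$ being transverse to $W_\pm$, and this holds equally when both factors use $\Lambda_\pm$.

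Consequently $B\in\Psi^{0,0}_{\oc}(\R^n)$, with principal symbol
\[
\sigma(B)(q)=\int|\sigma(\Lambda A)|^2(q,\gamma_q(s))\,ds\ \ge 0 .
\]
Here the order $-\frac14-m$ is exactly what makes the total order $2(-\frac14)+\frac12=0$. The bundle factor $S^{[0]}_{\oc-\oc}$ on the diagonal trivializes to a 1-cusp symbol of order $(-\infty,0)$, since the 1c frequencies are compactly supported. Boundedness of 1-cusp operators on $L^2$ then gives boundedness of $B$, and hence of $A$.

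For non-compactness, suppose $\sigma(A)\neq0$ at some point of $\Legps$ over $q_0\in\overline{{}^{\oc}T^*_{\partial\overline{\R^n}}\R^n}$. Then $\sigma(\Lambda A)$ is nonzero there too, because $\Lambda$ is elliptic. The integral above is then strictly positive near $q_0$, so $B$ is elliptic at $q_0$ in $\Psi^{-\infty,0}_{\oc}$.

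Assume for contradiction that $A$ is compact into $H^{m,r}_{\ps}$. Then $\Lambda A$ is compact on $L^2$, so $B$ is compact on $L^2(\R^n)$. Choose $G\in\Psi^{0,0}_{\oc}$ microsupported near $q_0$ at $x_{\oc}=0$ and microlocal parametrix $G'\in\Psi_{\oc}^{0,0}$ with $G'B=G+R$, $R\in\Psi^{-\infty,-\infty}_{\oc}$. Since $G'B$ is compact and $R$ is compact, $G$ must be compact on $L^2$. This is impossible: take $u_k$ to be 1-cusp coherent states concentrating at $q_0$, i.e.\ $x_{\oc}\to0$ with frequency fixed. These satisfy $\|u_k\|_{L^2}=1$, $u_k\rightharpoonup0$, and $\|Gu_k\|\to|\sigma(G)(q_0)|\neq0$.

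The main obstacle is the first reduction: justifying that $(\Lambda A)^*(\Lambda A)$ is a 1-cusp pseudodifferential operator with the stated symbol. This requires adapting Proposition~\ref{prop: 1c-ps composition to 1c-1c} to the diagonal relation and checking that the symbol bundle $S^{[0]}$ trivializes with the correct weight $x_{\oc}^0$. If that adaptation is awkward, a fallback is to compute $B$ directly from the normal-form phases \eqref{eq:1c-ps-phase-normal-form} by stationary phase in $(t,z)$. This produces a kernel of 1-cusp pseudodifferential form with phase $\frac{t(1-\sigma^2)}{x_{\oc}^2}+\dots$ and $t$ integrated out.
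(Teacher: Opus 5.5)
Your proposal is correct and follows essentially the same route as the paper: reduce to the order-zero case with an elliptic $\ps$-operator, show that $A^*A\in I^0_{\oc-\oc}$ is associated with the lifted diagonal, so $A^*A\in\Psi^{-\infty,0}_{\oc}(\R^n)$ with nonnegative principal symbol $\int|\sigma(A)|^2\,ds$, then get boundedness by the $TT^*$ argument and non-compactness from ellipticity of $A^*A$ at a point over $x_{\oc}=0$. The only difference is that you prove directly, via a microlocal parametrix and 1-cusp coherent states, that an operator elliptic at such a point is not compact on $L^2$, whereas the paper quotes this from \cite[Lemma~9.7]{jia2026metric-inverse}.
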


\begin{proof}
Since $A$ has compact support in the $\R^{n+1}$-variable modulo a term in $\mathcal{S}(\R^{n+1} \times \R^n)$, the order $r$ is immaterial and we will take $r=0$ below.

Now we further reduce to the case with $m=0$. Taking $Q = \Op(\la \zeta \ra^m) \in \Psi_{\ps}^{m,-\infty}(\R^{n+1})$, which is bounded and invertible from $H_{\ps}^{m,r}$ to $H_{\ps}^{0,r}$, and $QA \in I_{\oc-\ps}^{-1/4}(\R^{n+1}\times \R^n, \Lagps_\pm)$ by  Proposition~\ref{prop: PsiDO- 1c-ps FIO composition}.
Then $A$ is a bounded (resp. compact) operator from $L^2(\R^n)$ to $H^{m,r}_{\ps}(\R^{n+1})$ if and only if $QA \in I_{\oc-\ps}^{-1/4}(\R^{n+1}\times \R^n, \Lagps_\pm)$ is a bounded (resp. compact) operator from $L^2(\R^n)$ to $H_{\ps}^{0,r}(\R^{n+1})$. So we are reduced to prove the case $m=0$.

Consider the boundedness first. We show that $A: L^2(\R^n) \to L^2(\R^{n+1})$ and $A^*: L^2(\R^{n+1}) \to L^2(\R^n)$ are bounded by a $TT^*$-argument similar to \cite[Section~4.3]{FIOII}. 
Concretely, by the same proof as in the proof of \cite[Theorem~8.3]{HJ2026-scattering-map}, we know 
\begin{align}
    A^*A \in I_{\oc-\oc}^{0}(\R^n\times\R^n, ( (\Lagps_\pm^*)' \circ \Lagps' )'),
\end{align}
where $\circ$ denotes the composition of canonical relations, $\Lagps_\pm^*$ is obtained by switching the left and right variables of $\Lagps_\pm$ and the sign of all frequencies, and $'$ means switching the sign of frequencies on the right variable, which sends a Lagrangian submanifold to the corresponding canonical relation. See \cite[Section~8.1]{HJ2026-scattering-map} for the discussion about details of the composition involving our extra blow-ups. 
A direct computation shows that $( (\Lagps_\pm^*)' \circ \Lagps' )' = {}^{\oc}N^*\Delta_b$, which is the conormal bundle of the lifted diagonal in $\ococb$.
 
On the other hand, the class $I^{0}_{\oc-\oc}(X^2_b;{}^{\oc}N^*\Delta_b)$ is precisely the class of $\oc$-pseudodifferential operators
\begin{align*}
\Psi_{\oc}^{-\infty,0}(\R^n),
\end{align*}
defined in \cite[Section~2.3]{zachos2022inverting} (see also \cite[Section~2.1]{jia2022tensorial}), where $-\infty$ is the differential order and $0$ is the decay order.
In particular, $A^*A$ acts as a bounded map
\begin{equation}
   L^2(\R^n) \to L^2(\R^n).
\end{equation}
So the boundedness of $A,A^*$ follows.

Now we prove the part concerning the compactness. Suppose the claim is not true, i.e., $A: L^2(\R^n) \to L^2(\R^{n+1})$ is compact, then $A^*A: L^2(\R^n) \to L^2(\R^n)$ is compact. However, suppose $\sigma^{-1/4}_{\oc-\ps}(A)$ is non-vanishing at a point in $\partial \Lagps_\pm$ and denote its projection to $\overline{{}^{\oc}T^*\R^n} $ by $q_\pm$, then by \eqref{eq: principal symbol, 1c-ps composition to 1c-1c}, we know that the principal symbol of $A^*A$ is strictly positive at (the lift of) $(q_-,q_+)$, which is in $\partial ({}^{\oc}N^*\Delta_b)$, since it is an integral of a non-negative quantity along a geodesic, and it is positive at a certain point.
Then by the second part of \cite[Lemma~9.7]{jia2026metric-inverse}, we obtain a contradiction, which completes the proof.

\end{proof}

We are ready to prove Theorem~\ref{thm:main-Poisson-intro} now.

\begin{proof}
As mentioned in Remark~\ref{remark:sharpness-main-2},  $\Poi_{\pm}^{(1)}-\Poi_{\pm}^{(2)} \in I_{\oc-\ps}^{-7/4}(\R^{n+1} \times \R^n, \Lagps_\pm)$ by \cite[Proposition~A.2]{jia2026metric-inverse}.
Now suppose $\Poi_{\pm}^{(1)}-\Poi_{\pm}^{(2)}$ is compact from $L^2(\R^n)$ to $H_{\ps}^{3/2,r}(\R^{n+1})$ for a fixed $r$. Let $Q_{\pm,\ps},Q_{\pm,\oc}$ be microlocalizers as in Section~\ref{subsec:Poisson-principal}, then $Q_{\pm,\ps}(\Poi_{\pm}^{(1)}-\Poi_{\pm}^{(2)})Q_{\pm,\oc}$ is compact from $L^2(\R^n)$ to $H_{\ps}^{3/2,r}(\R^{n+1})$ as well.\footnote{In fact, one can show that $\Poi_{\pm}^{(1)}$ coincide with $\Poi_{\pm}^{(2)}$ outside the region we microlocalized to. }
By the second part of Proposition~\ref{prop:boundedness-compactness-1cps} with $m=\frac{3}{2}$, we know 
\begin{equation}
    \sigma^{-7/4}_{\oc-\ps}( Q_{\pm,\ps} (\Poi_{\pm}^{(1)}-\Poi_{\pm}^{(2)}) Q_{\pm,\oc} ) = 0.
\end{equation}
This means $b_{\pm,1}$ in \eqref{eq:b-1-formula}\eqref{eq:b+1-formula}, which solves the ODE \eqref{eq: transport ODE, b-1} vanishes identically, which means the forcing $(V_1-V_2)(\gamma(s))a_{\pm,0}(q_\pm,\gamma(s))$ has to vanish.
Since for $\gamma(s) \in \supp \; (V_1-V_2)$, we have $s \in (s_-(q_-),s_+(q_+))$, as discussed after \eqref{eq:def-s-s+}, in combination with \eqref{eq:apm0-definite-sign}, we have $V_1-V_2 = 0$, which completes the proof.


\end{proof}

\section{Determining the potential by the scattering map}
\label{sec:sc-map-determine-potential}

We show that the subleading part of the scattering map determines the potential in this section. 
We still use $[-T,T] \times B_R(0)$ to denote a compact cylinder that contains the support of $g-g_0$ and $V_i$.

\subsection{Some preliminaries on the Inverse problems}
\label{sec:preliminaries-inverse-problems}

As mentioned in Section~\ref{subsec:strategy-sketch}, we will show that the weighted X-ray transform of $V_1-V_2$ vanishes and this implies that $V_1-V_2$ vanishes by the result below.

Let $g(t)$ be as in Section~\ref{sec:intro}. 
For fixed $t$, let $\beta$ be a geodesic in $\R^n$ with respect to $g(t)$.
The weighted X-ray transform of a function $f$ supported in $B_R(0)$ with weight $w$ is defined by
\begin{align}
 I_wf(\beta) = \int_\beta f(\beta(s)) w(\beta(s),\dot{\beta}(s)) ds,   
\end{align}
which sends a function supported in $B_R(0)$ to a function on the space of geodesics, which can be viewed as a function on $S^*B_R(0)$.




The answer is affirmative in a lot of cases. We state the result by Uhlmann, Vasy and Zhou that we will use.
As the inverse problems are not the main subject of this paper, we can't give a complete survey on the vast literature on this problem.

\begin{thm}[Adapted from \cite{uhlmann2016inverse}] \label{thm:X-ray}
For $n \geq 3$, let $g(t)$ be a family of metrics that admits a convex function in the sense of Definition~\ref{definition:metric-classes}, and let $w$ be a smooth weight function such that $|w| \gtrsim 1$ on $[-T,T] \times S^*B_R(0)$.
Suppose $f \in C^\infty_c([-T,T] \times B_R(0))$ satisfies
\begin{equation}
    I_wf = 0,
\end{equation}
then $f = 0 $.

\end{thm}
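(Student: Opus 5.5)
The plan is to prove the theorem slice by slice in $t$, and to deduce it for each slice from the local and global injectivity results of \cite{uhlmann2016inverse} for weighted geodesic X-ray transforms.

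\textbf{Step 1 (freeze $t$).} Fix $t_0\in[-T,T]$ and set $f_{t_0}=f(t_0,\cdot)\in C_c^\infty(B_R(0))$. The transform $I_w f$ is defined geodesic by geodesic for the metric $g(t_0)$, with $t$ constant along each curve. So $I_wf=0$ says exactly that the weighted X-ray transform of $f_{t_0}$ with respect to $g(t_0)$, with weight $w_{t_0}=w(t_0,\cdot,\cdot)$, vanishes on every $g(t_0)$-geodesic. The hypotheses are uniform in $t$, so it suffices to prove $f_{t_0}=0$ for each $t_0$. Smoothness of $f$ in $t$ is not needed.

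\textbf{Step 2 (foliation from the convex function).} Let $\phi=f_{\mathrm{cvx}}(t_0,\cdot)$ be the convex function from Definition~\ref{definition:metric-classes}, so $\mathrm{Hess}_{g(t_0)}\phi>0$ on $\overline{B_R(0)}$. Its sublevel sets $\{\phi\le c\}$ are strictly convex, and their boundaries $\{\phi=c\}$ are strictly convex hypersurfaces seen from the side $\{\phi>c\}$. Strict convexity also means $\phi$ has at most one critical point, a minimum, so the level sets foliate $\overline{B_R(0)}$ minus at most a point. I will work with $\tilde\rho=-\phi$, normalized so that $\{\tilde\rho>c\}$ is the region outside the hypersurface $\{\phi=c\}$. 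This is the foliation-by-strictly-convex-hypersurfaces condition used in \cite{uhlmann2016inverse} (cf.\ \cite[Lemma~2.1]{paternain2019geodesic}).

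Outside $B_R(0)$ the metric is flat and $f_{t_0}$ vanishes. I will therefore run the argument on a slightly larger ball on which $\phi$ is still strictly convex, after extending or smoothly modifying $\phi$ near the boundary if needed. Flatness near $\partial B_R(0)$ makes this extension routine. On that ball $f_{t_0}$ vanishes near the outermost level set.

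\textbf{Step 3 (local injectivity and layer stripping).} For $n\ge3$, the local theorem of \cite{uhlmann2016inverse} says the following. Near any point of a strictly convex level set $\{\tilde\rho=c\}$, the weighted X-ray transform with a smooth weight satisfying $|w|\gtrsim 1$ is injective on functions supported in $\{\tilde\rho\ge c\}$, restricted to a small region $\{\tilde\rho>c-\delta\}$. The proof is the scattering-calculus ellipticity argument applied to an exponentially conjugated normal operator over short geodesics almost tangent to the level set; the nonvanishing weight only changes the principal symbol by $|w|^2>0$ on the relevant directions.

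Let $c_*=\inf\{c:\ f_{t_0}=0 \text{ on } \{\tilde\rho>c\}\}$. By compactness of the level set $\{\tilde\rho=c_*\}$, applying the local result at each of its points, together with a uniform $\delta$, pushes the vanishing region below $c_*$ unless that level set is empty. The only exceptional case is when $c_*$ has reached the minimum of $\phi$, a single point, where continuity of $f_{t_0}$ finishes the argument. Hence $f_{t_0}\equiv0$, and since $t_0$ was arbitrary, $f=0$.

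\textbf{Main obstacle.} The delicate part is the uniformity needed in Step 3. The $\delta$ in the local theorem must be uniform along each level set and across the levels, and it must also be uniform in $t_0$ if one wants the stability version. This requires checking that the constants in the Uhlmann--Vasy construction depend only on finitely many derivatives of $g(t_0)$, $\phi$ and $w$ and on the lower bounds for $\mathrm{Hess}\,\phi$ and $|w|$. All of these are uniform on the compact set $[-T,T]\times \overline{B_R(0)}$. I would cite the remark in \cite{uhlmann2016inverse} on the global foliation argument and verify this dependence rather than reprove the local result.
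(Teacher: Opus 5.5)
Your proposal is correct and follows the paper's proof: freeze $t$, then apply the global injectivity result of \cite{uhlmann2016inverse} to $g(t_0)$ on $B_R(0)$, i.e.\ local injectivity near strictly convex level sets of the convex function combined with layer stripping, with the nonvanishing weight handled as in \cite[Remark~4.3]{uhlmann2016inverse}. The only difference is that you unpack the layer-stripping argument behind their Corollary instead of citing it; for the qualitative statement the uniformity across levels that you flag is not needed, since compactness of the single level set $\{\phi=c_*\}$ already supplies the $\delta$ that contradicts the choice of $c_*$.
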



\begin{proof}

The result follows from applying \cite[Corollary]{uhlmann2016inverse} to $B_R(0)$ at each fixed $t$.
Here we have an extra weight. But as pointed out in  \cite[Remark~4.3]{uhlmann2016inverse}, this weight that is lower bounded away from zero does not affect the proof and the injectivity of the X-ray transform continues to hold.

\end{proof}

As we have mentioned after Theorem~\ref{thm:main-scmap-intro}, Theorem~\ref{thm:X-ray} is used only in the last step of our proof: concluding that the potential vanishes from the fact that its X-ray transform vanishes. 
We use this version because one of the major advantages of our calculus of Legendrian distributions is that we can handle the presence of conjugate points. 
One can replace this Theorem by another result concerning the injectivity of the X-ray transform and change the condition on $g(t)$ correspondingly.


\subsection{The proof of Theorem~\ref{thm:main-scmap-intro}}

We prove Theorem~\ref{thm:main-scmap-intro} in this subsection. We give a criterion on the compactness of 1c-1c FIOs on Sobolev spaces first.
\begin{lmm} \label{lemma:1c1c-compactness}
Let $A \in I_{\oc-\oc}^{-m}(\R^n \times \R^n, \Lag)$, then it is compact from $L^2(\R^n)$ to $H_{\oc}^{s,m'}(\R^n)$ for any $m'<m$ and fixed $s$.

Conversely, suppose the principal symbol of $A \in I_{\oc-\oc}^{-m}(\R^n \times \R^n, \Lag)$ is not identically zero, then it is not compact from $L^2(\R^n)$ to $H_{\oc}^{s,m}(\R^n)$ for any fixed $s$.
\end{lmm}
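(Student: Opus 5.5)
The plan is to mimic the proof of Proposition~\ref{prop:boundedness-compactness-1cps}, with a $TT^*$-type argument in the 1c-1c calculus. The key observation is that for a 1c-1c Fourier integral operator, the differential order $s$ is immaterial. Indeed, $\Lag$ is admissible, so its closure stays away from fiber infinity of $\ocphase$ in both factors. Hence for any elliptic $Q_s \in \Psi_{\oc}^{s,0}(\R^n)$ the composition $Q_s A$ still lies in $I_{\oc-\oc}^{-m}(\R^n\times\R^n,\Lag)$, by the composition law Proposition~\ref{prop: 1c-1c transversal composition}, viewing $\Psi_{\oc}$ as 1c-1c FIOs associated with ${}^{\oc}N^*\Delta_b$. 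Only the decay order, carried by the $x_{\oc}$-homogeneity of the symbol bundle $S^{[m]}_{\oc-\oc}$, matters.

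For the reduction I take $Q=\Op(x_{\oc}^{-m'})$-type elliptic weights $W_{m'} \in \Psi_{\oc}^{0,m'}(\R^n)$, say $W_{m'} = x_{\oc}^{-m'}$ multiplied by an elliptic element of $\Psi_{\oc}^{s,0}$. This gives an isomorphism $H^{s,m'}_{\oc}\to L^2$. By the composition law, $W_{m'}A \in I^{m'-m}_{\oc-\oc}(\R^n\times\R^n,\Lag)$, so it suffices to prove two things: every element of $I^{-\epsilon}_{\oc-\oc}$ with $\epsilon>0$ is compact on $L^2$, and every element of $I^{0}_{\oc-\oc}$ with nonzero principal symbol is not.

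For the first claim, let $B=W_{m'}A\in I^{-\epsilon}_{\oc-\oc}$. I form $B^*B$. Since $\Lag$ is the twisted graph of a local symplectomorphism (as for $\mathrm{Gr}(\Cl)'$), the transversal composition law gives $B^*B \in I^{-2\epsilon}_{\oc-\oc}(X_b^2,{}^{\oc}N^*\Delta_b)=\Psi_{\oc}^{-\infty,-2\epsilon}(\R^n)$. Its decay order is negative and its differential order is $-\infty$, so it is compact on $L^2$ by the compact embedding $H^{N,2\epsilon}_{\oc}\hookrightarrow L^2$. Therefore $B$ is compact. For a general Lagrangian $\Lag$ one would first microlocalize with a partition of unity to pieces that are graphs, which is where the earlier $TT^*$ remark applies.

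For the converse, suppose $B=W_mA\in I^{0}_{\oc-\oc}$ has principal symbol nonvanishing at some point $(q,\Cl(q))$ and is compact on $L^2$. Then $B^*B\in\Psi_{\oc}^{-\infty,0}(\R^n)$ is compact. Its principal symbol at $q$ is $|\sigma(B)|^2>0$, by the symbol product in Proposition~\ref{prop: 1c-1c transversal composition}, so it is elliptic at a boundary point over $\partial\overline{\R^n}$ at finite frequency. By the second part of \cite[Lemma~9.7]{jia2026metric-inverse}, which was already used in the proof of Proposition~\ref{prop:boundedness-compactness-1cps}, a 1c operator of decay order $0$ elliptic at such a point is not compact on $L^2$. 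This is a contradiction. If one wants to argue directly instead, the plan is to test on a sequence of coherent states concentrating at $q$ as $x_{\oc}\to0$; they converge weakly to $0$ while $\|B^*Bu_k\|$ stays bounded below.

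The main obstacle is the bookkeeping in the composition of the weight $W_{m'}$ and of $B^*$ with $B$ in the blown-up space $X_b^2$. One has to check that the orders add exactly as claimed, with no $\tfrac12$ shifts of the kind appearing in Proposition~\ref{prop: 1c-ps composition to 1c-1c}, and that $\Lag^*\circ\Lag$ is the lifted conormal bundle of the diagonal. I would verify the order by computing directly with the normal-form phase \eqref{eq:1c-1c-phase-normal-form} and the density conventions in \eqref{eq: 1c-1c FIO, local form}.
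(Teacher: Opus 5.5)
Your proposal is correct and follows essentially the paper's $TT^*$ route. The paper shows $AA^*\in\Psi_{\oc}^{-\infty,-2m}(\R^n)$, deduces boundedness $L^2\to H^{s+\epsilon,m}_{\oc}$, and then uses the compact embedding into $H^{s,m'}_{\oc}$; for the converse it uses that $AA^*$ is elliptic at a boundary point. You instead first normalize to $L^2\to L^2$ with elliptic weights, as in the proof of Proposition~\ref{prop:boundedness-compactness-1cps}, and then argue with $B^*B$, which is only a difference in bookkeeping. Like the paper, you implicitly need $\Lag$ to be a twisted canonical graph, such as $\tilde\beta_b^*(\mathrm{Gr}(\Cl)')$, so that the composition lands in ${}^{\oc}N^*\Delta_b$; your aside that a general admissible $\Lag$ can be cut into graph pieces is not true in general, but the paper's application never needs it.
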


\begin{proof}



In the same way as the proof of Proposition~\ref{prop:boundedness-compactness-1cps} and \cite[Lemma~9.7]{jia2026metric-inverse}, we know
\begin{equation} \label{eq:A-Astar}
    AA^* \in \Psi_{\oc}^{-\infty,-2m}(\R^n),
\end{equation}
which is bounded from $H_{\oc}^{-(s+\epsilon),-m}(\R^n)$ to $H_{\oc}^{s+\epsilon,m}(\R^n) = \big(H_{\oc}^{-(s+\epsilon),-m}(\R^n) \big)^*$, where $\epsilon>0$.
So $A: L^2(\R^n) \to H_{\oc}^{s+\epsilon,m}(\R^n)$, $A^*: H_{\oc}^{-(s+\epsilon),-m}(\R^n) \to L^2(\R^n)$ are bounded.
Since the embedding $H_{\oc}^{s+\epsilon,m}(\R^n) \hookrightarrow H_{\oc}^{s,m'}(\R^n)$ is compact, the first conclusion follows.

For the second part, by \eqref{eq:A-Astar} and Theorem~\ref{prop: 1c-1c transversal composition}, we know $AA^*$ is a PsiDO in $\Psi_{\oc}^{-N,-2m}(\R^n)$ that is elliptic at a certain point $q \in \overline{{}^{\oc}T^*_{\partial \overline{\R^n} }\R^n}$, which is not compact from $H_{\oc}^{-s,-m}(\R^n)$ to $H_{\oc}^{s,m}(\R^n)$.
Combining with the boundedness of $A^*$ above (with $s+\epsilon$ replaced by $s$, since both are arbitrary), we know $A$ is not compact from $L^2(\R^n)$ to $H_{\oc}^{s,m}(\R^n)$ as claimed.
    
\end{proof}

Now we are ready to prove Theorem~\ref{thm:main-scmap-intro}. 
\begin{proof}[Proof of Theorem~\ref{thm:main-scmap-intro}]

First recall the formula constructing the scattering map from the Poisson operator used in the proof of \cite[Proposition~9.2]{HJ2026-scattering-map}:
\begin{align} \label{eq:SQ1c-1}
 S_i Q_{\oc} =  i(2\pi)^n \tilde Q_{\oc} (\mathcal{P}_+^{(i)})^* \tilde Q_{\ps} Q_{\ps} [P_i,Q_+] \mathcal{P}_-^{(i)} Q_{\oc} \text{ mod } \mathcal{S}(\R^{2n}). 
\end{align}
Here $Q_+ \in \Psi_{\ps}^{0,0}(\R^{n+1})$ satisfies: the projection of $\WF'_{\ps}([P_i,Q_+])$ to $\R^{n+1}$ is contained in $B_R(0)$. Along each bicharacteristic line, $Q_+$ is microlocally zero before entering (the phase space over) $B_R(0)$, and microlocally identity after leaving $B_R(0)$, and its left full symbol is monotonically increasing along the $H_p^{2,0}$-flow.

For $i=1,2$, let $K_{\pm}^{(i)} = \sum_{j=0}^\infty K_{\pm,j}^{(i)}$ be the parametrix in \eqref{eq:Kpm-sum} constructed for $P_i$. 
By \eqref{eq:SQ1c-1}, we have 
\begin{align} \label{eq:SQ1c-2}
S_iQ_{\oc} = i(2\pi)^n \sum_{j+j' \leq N}  (K_{+,j}^{(i)})^*  [P_i,Q_+] K_{-,j'}^{(i)}  \text{ mod } I_{\oc-\oc}^{-N-1}(\R^{n} \times \R^n, \beta_b^*(\mathrm{Gr}(\Cl)') ), 
\end{align}
where we absorbed microlocalizers into parametrices and they still satisfy properties in Section~\ref{sec:Poisson-determine-potential}.
 
Notice that the potential $V_i$ is two orders lower (in terms of the $\ps$-differential order, which in turn adds to the order of $\oc-\ps$ FIOs by Proposition~\ref{prop: PsiDO- 1c-ps FIO composition}) than the leading part $P_g=D_t+\Delta_g$, so we know that we can replace $P_i$ in the commutator by $P_g$ for \eqref{eq:SQ1c-2} with $N=1$:
 \begin{align} \label{eq:SQ1c-2'}
S_i Q_{\oc} = i(2\pi)^n \sum_{j+j' \leq 1} (K_{+,j}^{(i)})^* [P_g,Q_+] K_{-,j'}^{(i)} \text{ mod } I_{\oc-\oc}^{-2}(\R^n \times \R^n, \beta_b^*(\mathrm{Gr}(\Cl)') ). 
 \end{align}

Consequently, we know the sub-leading part of $S_iQ_{\oc}$ has three terms, the term in \eqref{eq:SQ1c-2} with $j=0,j'=1$, $j=1,j'=0$, or the sub-leading part of the term $j=0,j'=0$.

For the last term, we recall the proof of \cite[Theorem~8.3]{HJ2026-scattering-map}. 
This term is introduced when we conduct the stationary phase to the composition $(K_{+,0}^{(i)})^* [P_g,Q_+] K_{-,0}^{(i)} Q_{\oc}$.
As mentioned after \eqref{eq:a+0-explicit-1},  $K_{\pm,0}^{(i)}$ is independent of the potential, hence so is this term. 
Consequently, the contribution from this term will be cancelled in $(S_1-S_2)Q_{\oc}$.



Using the formula for the principal symbol in Proposition~\ref{prop: 1c-ps composition to 1c-1c}, 
we know that the principal symbol of $(S_1-S_2)Q_{\oc}$, which comes from the sum of terms in \eqref{eq:SQ1c-2'} with $(j,j')=(0,1)$ or $(1,0)$, is
\begin{equation} \label{eq:subleading-S1-S2-1}
\begin{aligned}
\sigma^{-1}_{\oc-\oc}\bigl( (S_1-S_2)Q_{\oc}\bigr) \bigl(q_+,q_-\bigr)  
= & (2\pi)^n \int_{\mathbb R} \bigl(H_p^{2,0}\mk{q}_+\bigr)(\gamma(s))
\times \Big( \overline{b_{+,1}(q_+,\gamma(s))}a_{-,0}(q_-,\gamma(s)) 
\\ & + a_{+,0}(q_+,\gamma(s))b_{-,1}(q_-,\gamma(s)) \Big)  ds ,
\end{aligned}
\end{equation}
where $\mk{q}_+$ is the principal symbol of $Q_+$, $H_p^{2,0}\mk{q}_+$ is the principal symbol of $i^{-1}[P_g,Q_+]$, and we used the fact that $a_{+,0}$ is real.

Using \eqref{eq:a-0,explicit-1}\eqref{eq:a+0-explicit-1}\eqref{eq:b-1-formula}\eqref{eq:b+1-formula}, after changing the order of integration, \eqref{eq:subleading-S1-S2-1} can be rewritten as 
\begin{equation}
\begin{aligned}
&\sigma_{\oc-\oc}^{-1}\bigl((S_1-S_2)Q_{\oc}\bigr)
(q_+,q_-)
\\ = & (2\pi)^n  \Big(
\int_{\mathbb R}
\bigl(V_1-V_2\bigr)(\gamma(s'))
\biggl(
\int_{-\infty}^{s'}
\bigl(H_p^{2,0}\mk{q}_+\bigr)(\gamma(s))
a_{-,0}(q_-,\gamma(s)) 
\big( \int_{s'}^{+\infty} E(\rho,s)F_+(\rho)\,d\rho \big) ds \biggr) d s'
\\ &\quad - \int_{\mathbb R}
\bigl(V_1-V_2\bigr)(\gamma(s'))
\biggl( \int_{s'}^{+\infty} \bigl(H_p^{2,0}\mk{q}_+\bigr)(\gamma(s)) a_{+,0}(q_+,\gamma(s)) \big( \int_{-\infty}^{s'} E(\rho,s)F_-(\rho)\,d\rho \big) ds \biggr) d s' . \Big)
\\ = & -(2\pi)^ni \Big( \int_{\mathbb R}
\bigl(V_1-V_2\bigr)(\gamma(s')) \biggl( \int_{-\infty}^{s'} \bigl(H_p^{2,0}\mk{q}_+\bigr)(\gamma(s)) a_{-,0}(q_-,\gamma(s)) 
\big( \int_{s'}^{+\infty} E(\rho,s) iF_+(\rho)\,d\rho  \big) d s
\\ &\quad - \int_{s'}^{+\infty} \bigl(H_p^{2,0}\mk{q}_+\bigr)(\gamma(s)) a_{+,0}(q_+,\gamma(s)) \big( \int_{-\infty}^{s'} E(\rho,s) iF_-(\rho)\,d\rho \big) ds \biggr) d s' \Big),
\end{aligned}
\end{equation}
where we used the fact that $F_\pm$ is purely imaginary and other factors are real for complex conjugations.

Next we prove that the weight is uniformly away from 0 on the support of $V_1-V_2$.
For $s'$ with $\gamma(s') \in \supp\; (V_1-V_2)$, we have $s' \in (s_-(q_-),s_+(q_+))$.
By Lemma~\ref{lemma:sign-symbols}, we know 
\begin{equation}
  \int_{s'}^{+\infty} E(\rho,s) iF_+(\rho)\,d\rho>0, \quad    \int_{-\infty}^{s'} E(\rho,s) iF_-(\rho)\,d\rho<0.
\end{equation}
By the property of $Q_+$ stated after \eqref{eq:SQ1c-1}, in combination with that $s_-(q_-)$ (resp. $s_+(q_+)$) is smaller (resp. larger) than the times when $\gamma(\cdot)$ enters (resp. escapes) $B_R(0)$ as discussed after \eqref{eq:def-s-s+}, we know $H_p^{2,0}\mk{q}_+$ is a non-negative function supported in $(s_-(q_-),s_+(q_+))$ that integrates to be $1$ along each bicharacteristic line.
For $s \in (s_-(q_-),s_+(q_+))$ we have $a_{\pm,0} \gtrsim 1$ by \eqref{eq:apm0-definite-sign}.
In sum, the absolute value of the weight for any $\gamma(s')$ is $\gtrsim 1$.

By the identification \eqref{eq:iota-def} between bicharacteristic lines with geodesics of $g$ at fixed time, this is a weighted X-ray transform of $V_1-V_2$.
By Lemma~\ref{lemma:1c1c-compactness} with $m=1$, we know that when the compactness condition in Theorem~\ref{thm:main-scmap-intro} holds, this weighted X-ray transform of $V_1-V_2$ vanishes. 
Finally, invoking Theorem~\ref{thm:X-ray} completes the proof of Theorem~\ref{thm:main-scmap-intro}.
\end{proof}

\begin{remark} \label{remark: complex-potential}
    Using the same proof above, if $V_i$ are allowed to take complex values, then $\sigma_{\oc-\oc}^{-1}\bigl((S_1-S_2)$ is still a weighted X-ray transform of $V_1-V_2$.
Since the sign switches for the imaginary part contributed from $\overline{b_{+,1}}a_{-,0}$, the weight is not necessarily lower bounded away from zero now and one can't determine the imaginary part of $V_1-V_2$.
However, the weighted X-ray transform for the real part of $V_1-V_2$ remains the same as above, which has a weight bounded away from $0$, and this argument continues to apply to determine the real part of $V_1-V_2$.
\end{remark}

\bibliographystyle{plain}
\bibliography{bib_sc_map_inverse}

\end{document}